\tikzset{
math to/.tip={Glyph[glyph math command=rightarrow]},
loop/.tip={Glyph[glyph math command=looparrowleft, swap]},
loop'/.tip={Glyph[glyph math command=looparrowleft]},
 weird/.tip={Glyph[glyph math command=Rrightarrow, glyph length=1.5ex]},
  pi/.tip={Glyph[glyph math command=pi, glyph length=1.5ex, glyph axis=0pt]},
}
\DeclareMathOperator{\cl}{cl}
\DeclareMathOperator{\scl}{scl}
\DeclareMathOperator{\Wh}{Wh}
\def\immerses{\looparrowright}
 \def\into{\hookrightarrow}
\def\ncl#1{\mathord{\langle}\mskip -4mu plus 0mu minus 0mu
  \mathord{\langle}#1\mathord{\rangle}\mskip -4mu plus 0mu minus 0mu
  \mathord{\rangle}}
\def\real#1{\boldsymbol #1}
\def\QQ{\mathbb{Q}}
\def\RR{\mathbb{R}}
\def\NN{\mathbb{N}}
\def\ZZ{\mathbb{Z}}
\def\calE{\mathcal{E}}
\def\calP{\mathcal{P}}
\newtheorem{theorem}{Theorem}[section]
\newtheorem{lemma}[theorem]{Lemma}
\newtheorem{corollary}[theorem]{Corollary}
\newtheorem{proposition}[theorem]{Proposition}
\newtheorem{conjecture}[theorem]{Conjecture}
\newtheorem{question}[theorem]{Question}
\newtheorem{introthm}{Theorem}
\newtheorem{introcor}[introthm]{Corollary}
\theoremstyle{definition}
\newtheorem{definition}[theorem]{Definition}
\theoremstyle{remark}
\newtheorem{remark}[theorem]{Remark}
\newtheorem{example}[theorem]{Example}
\author{Larsen Louder and Henry Wilton}
\newcommand{\Addresses}{{
  \bigskip
  \footnotesize

  L.~Louder, \textsc{Department of Mathematics, University College London, Gower Street, London WC1E  6BT, UK}\par\nopagebreak
  \textit{E-mail address:} \texttt{l.louder@ucl.ac.uk}

  \medskip

  H. Wilton, \textsc{DPMMS, Centre for Mathematical Sciences, Wilberforce Road, Cambridge CB3 0WB, UK}\par\nopagebreak
  \textit{E-mail address:} \texttt{h.wilton@maths.cam.ac.uk}

}}
\title{Uniform negative immersions and the coherence of one-relator groups}
\begin{document}
\maketitle

\begin{abstract}
Previously, the authors proved that the presentation complex of a one-relator group $G$ satisfies a geometric condition called \emph{negative immersions} if every two-generator, one-relator subgroup of $G$ is free.  Here, we prove that one-relator groups with negative immersions are coherent, answering a question of Baumslag in this case.  Other strong constraints on the finitely generated subgroups also follow such as, for example, the co-Hopf property. The main new theorem strengthens negative immersions to \emph{uniform negative immersions}, using a rationality theorem proved with linear-programming techniques.
\end{abstract}


\section{Introduction}

\begin{definition}[Coherence]
  A group $G$ is said to be \emph{coherent} if all its finitely generated subgroups are finitely presentable.
\end{definition}

A notorious question of Baumslag  \cite[p.\ 76]{baumslag_problems_1974} asks whether every one-relator group $G=F/\ncl{w}$ is coherent.  It is a curious fact that the many known examples of one-relator groups with pathological propertes --- for instance, Baumslag--Solitar groups, the Baumslag--Gersten group \cite{baumslag_non-cyclic_1969}, or the recent examples of Gardam--Woodhouse \cite{gardam_geometry_2019} --- all have two generators. Our first theorem resolves Baumslag's question, as long as such subgroups are excluded.

\begin{introthm}
  \label{thm: Coherence}
  Let $G=F/\ncl{w}$ be a one-relator group. If every two-generator, one-relator subgroup of $G$ is free, then $G$ is coherent.
\end{introthm}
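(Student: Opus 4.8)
The plan is to translate coherence of $G$ into a finiteness statement about immersions of compact $2$-complexes into the presentation complex $X$ of $G$, and to drive that finiteness with \emph{uniform} negative immersions. By our earlier theorem, the hypothesis that every two-generator one-relator subgroup of $G$ is free implies that $X$ has negative immersions, and we may assume $w$ is not a proper power (the contrary case being handled separately), so that $X$ and all its covers are aspherical. There then remain two tasks: (i) upgrade negative immersions to uniform negative immersions; and (ii) deduce coherence.

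For (i): negative immersions is only a \emph{per-complex} assertion --- for each reduced immersion $Y \immerses X$ with at least one $2$-cell it bounds the Euler characteristic $\chi(Y)$ --- but it gives no uniform control relating that bound to the area $\mathrm{area}(Y) = \#\{2\text{-cells of } Y\}$. I would organise the (infinite) family of reduced immersed $2$-complexes polyhedrally: a reduced immersion to $X$ is locally modelled on finitely many pictures --- Whitehead-type data at vertices and edges, together with the attaching data of the $2$-cells --- so, up to bookkeeping, such an immersion is an integral point of a rational polyhedral cone of admissible weightings, on which $\chi$ and $\mathrm{area}$ restrict to linear functionals. The infimum of $-\chi/\mathrm{area}$ over this cone is then the optimum of a linear program, hence a rational number, which negative immersions forces to be strictly positive. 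This rationality theorem produces $\epsilon = \epsilon(X) > 0$ with $-\chi(Y) \geq \epsilon \cdot \mathrm{area}(Y)$ for every reduced immersion $Y \immerses X$ with $\mathrm{area}(Y) \geq 1$: this is uniform negative immersions.

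For (ii): let $H \leq G$ be finitely generated and let $\widehat{X}_H \to X$ be the corresponding (aspherical) cover. Since $H$ is finitely generated, the class of compact connected $2$-complexes mapping to $X$ and $\pi_1$-surjecting onto $H$ is nonempty; fix a member $Y$ of least complexity, ordered lexicographically by area and then by number of $1$-cells. Then $Y \immerses X$ is automatically a reduced immersion, since otherwise a fold or the removal of a cancellable pair of $2$-cells would lower the complexity. The crux is that uniform negative immersions makes minimality incompatible with $\pi_1 Y \to H$ having nontrivial kernel: a kernel element is represented by a loop in $Y$ whose image bounds a reduced disc diagram in $\widehat{X}_H$, and asphericity together with the linear Euler-characteristic bound lets one compress $Y$ across that diagram to a complex of strictly smaller complexity --- a contradiction. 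Hence $\pi_1 Y \cong H$; then $b_1(Y) = b_1(H) \leq \rank(H)$ and, as $b_2(Y) \geq 0$, we get $-\chi(Y) = b_1(Y) - 1 - b_2(Y) \leq \rank(H) - 1$. If $Y$ has a $2$-cell then uniform negative immersions gives $\epsilon \cdot \mathrm{area}(Y) \leq \rank(H) - 1$; if not, $H$ is free. In every case $Y$ is a finite $2$-complex with fundamental group $H$, so $H$ is finitely presented, and $G$ is coherent.

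I expect the rationality theorem of (i) to be the main obstacle: one must present the infinite family of reduced immersions as a genuinely finite-dimensional rational polyhedron on which $\chi$ and $\mathrm{area}$ are linear, and then rule out that the optimal ratio equals $0$ --- which is not a formal consequence of the per-complex inequality of negative immersions. The secondary difficulty is the compression step of (ii): handling self-overlapping disc diagrams and non-embedded loops while verifying that complexity genuinely drops, for which asphericity of $X$ and the uniform constant $\epsilon$ are both needed.
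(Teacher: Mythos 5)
Your step (i) is essentially the paper's argument: reduced/irreducible immersions are encoded by local vertex and edge data as integral points of a rational polyhedral cone on which (suitably normalised versions of) $\chi$ and the area are linear, the linear programme attains a rational optimum at a vertex realised by an actual complex, and negative immersions forces that optimum to be strictly negative. You correctly identify that strict negativity of the \emph{supremum} is not formal from the per-complex inequality and is exactly what attainment buys.

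Step (ii), however, has a genuine gap. Your mechanism for showing that a minimal-complexity $\pi_1$-surjection $Y\to H$ is injective --- ``a kernel element bounds a reduced disc diagram\ldots and one can compress $Y$ across that diagram to a complex of strictly smaller complexity'' --- goes the wrong way. Killing a nontrivial kernel element requires \emph{attaching} a van Kampen diagram (and folding), which produces a complex of strictly \emph{larger} area; neither asphericity nor the constant $\epsilon$ supplies any cancellation that would bring the complexity back below that of $Y$, so minimality yields no contradiction. What uniform negative immersions actually gives is an a priori \emph{upper} bound on the area of every irreducible immersed complex $\pi_1$-surjecting onto $H$ (since $\pi_1$-surjectivity bounds $b_1$, hence bounds the area by $(b_1-1)/\epsilon$). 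The deduction therefore runs in the opposite direction: one builds an \emph{increasing} sequence $Y_0\immerses Y_1\immerses\cdots\immerses X$ of irreducible, $\pi_1$-surjective approximations with $H=\varinjlim\pi_1 Y_i$, observes that the uniform area bound leaves only finitely many isomorphism types of such immersions, and concludes that the sequence stabilises (a self-immersion of a compact complex being an isomorphism), so $H\cong\pi_1(Y_i)$ for large $i$.

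A second, related omission: the uniform inequality only applies to \emph{irreducible} $Y$, and minimality of complexity does not rule out reducibility in the relevant sense (free faces, locally separating vertices, and --- crucially --- complexes that only become visibly reducible after unfolding, detected by Whitehead's lemma). Handling this branch of the dichotomy is where most of the technical work lies: one first splits $H$ by Grushko and uses Scott's lemma to arrange that the approximating complexes have freely indecomposable fundamental group, and then replaces each by its irreducible core, whose existence and functoriality under branched immersions must be established. Your sketch treats ``reduced'' as merely ``folded with no cancellable pair of $2$-cells,'' which is strictly weaker than what the uniform inequality requires.
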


The case where $G$ has torsion was proved independently by the authors \cite{louder_one-relator_2020} and Wise \cite{wise_coherence_2020}. Sapir and Spakulova showed that a generic one-relator group with at least three generators is coherent \cite{sapir_almost_2011}.

The hypothesis of Theorem \ref{thm: Coherence} --- that every two-generator, one-relator subgroup is free --- appears on its face to be difficult to check. However, the results of \cite{louder-wilton2} imply that it is equivalent to other, more effective, conditions.  The following theorem, which follows from \cite[Theorems 1.3 and 1.5]{louder-wilton2}, summarises these equivalences. (The relevant definitions are explained below.)  By convention, the relator $w$ is always assumed to be non-trivial.

\begin{theorem}[\cite{{louder-wilton2}}]\label{thm: Negative immersions}
Let $G=F/\ncl{w}$ be a one-relator group, and let $X$ be the natural presentation complex. The following conditions are equivalent:
\begin{enumerate}[(i)]
\item the primitivity rank $\pi(w)>2$;
\item $X$ has negative immersions;
\item every two-generator subgroup of $G$ is free;
\item every two-generator, one-relator subgroup of $G$ is free.
\end{enumerate}
\end{theorem}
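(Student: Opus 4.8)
The plan is to run the cycle (i)$\Rightarrow$(ii)$\Rightarrow$(iii)$\Rightarrow$(iv)$\Rightarrow$(i), importing the substantive equivalences from \cite{louder-wilton2}. The implication (iii)$\Rightarrow$(iv) is immediate, since a two-generator, one-relator subgroup is in particular two-generator. The equivalence (i)$\Leftrightarrow$(ii) is \cite[Theorem 1.3]{louder-wilton2}; I will not reprove it, but recall its shape: negative immersions asserts that every immersion $Y \looparrowright X$ of a compact connected $2$-complex carrying at least one $2$-cell has $\chi(Y)$ strictly below a critical value, and a Stallings-folding-plus-minimization argument shows that a ``worst'' such $Y$ has $\pi_1 Y \cong H/\ncl{w_0}_H$ for some $H \le F$ of rank at most $2$ containing a root-free conjugate $w_0$ of $w$ non-primitively; thus the failure of negative immersions is exactly the assertion $\pi(w) \le 2$.

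For (ii)$\Rightarrow$(iii) I would invoke \cite[Theorem 1.5]{louder-wilton2}: given a finitely generated non-free $K \le G$, pass to a tower (graph-of-spaces) lift of the corresponding cover of $X$; the complex at the top of the tower is compact, immerses in $X$, and, because $K$ is non-free, cannot be collapsed to a graph, so its Euler characteristic is non-negative while it still carries a $2$-cell --- contradicting (ii). In fact \cite[Theorem 1.5]{louder-wilton2} is phrased so as to give the equivalence of (ii) with the freeness of every two-generator subgroup of $G$, so (ii)$\Leftrightarrow$(iii) comes for free.

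It remains to prove (iv)$\Rightarrow$(i), and here I would argue the contrapositive. Suppose $\pi(w) \le 2$. Since $w \ne 1$, there is $H \le F$ with $\rank(H) \le 2$, $w \in H$, and $w$ imprimitive in $H$; enlarging $H$ inside $F$ (which we may assume has rank $\ge 2$) we arrange $\rank(H) = 2$, using that a non-trivial proper power $u^k$ stays imprimitive in every rank-$2$ overgroup, as $k\cdot[u]$ is never primitive in $\ZZ^2$ when $|k| \ge 2$. Then $H/\ncl{w}_H$ is a two-generator, one-relator group that is not free: if $w$ is a proper power it has torsion, and otherwise the presentation complex of $\langle a,b \mid w\rangle$ is aspherical with $\chi = 0$ and cohomological dimension $2$, so the group is again not free. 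The only delicate point is that $H/\ncl{w}_H$ embeds in $G$; this is false for a general $H$, but holds for an $H$ realising the primitivity rank, which is precisely what the subgroup analysis behind \cite[Theorem 1.5]{louder-wilton2} delivers. With that embedding, (iv) fails. Thus the genuinely hard content --- the negative-immersions/primitivity-rank dictionary of \cite[Theorem 1.3]{louder-wilton2} together with its subgroup refinement in \cite[Theorem 1.5]{louder-wilton2} --- is imported wholesale, and within the present proof the one step needing care is the embedding $H/\ncl{w}_H \hookrightarrow G$ just used.
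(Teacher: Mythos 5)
Your proposal matches the paper's approach: the paper gives no independent proof of this statement, deriving it entirely from \cite[Theorems 1.3 and 1.5]{louder-wilton2} --- exactly the two results you import --- and the logical glue you supply between them is sound. The one blemish is in your (iv)$\Rightarrow$(i) contrapositive when $\pi(w)=1$: the rank-$2$ overgroup you enlarge to is not a critical subgroup, so the embedding clause of \cite[Theorem 1.5]{louder-wilton2} does not apply to it as you assert; but the conclusion follows anyway from the rank-$1$ critical subgroup $\langle u\rangle$ itself, whose image in $G$ is finite cyclic and hence already a non-free, (at most) two-generator, one-relator subgroup.
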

Since it is the briefest to state, if any of these hold we say that $X$ or $G$ has \emph{negative immersions}.

\begin{remark}\label{rem: Puder, Sapir--Spakulova, Arzhantsva--Olshanskii}
Puder's primitivity rank $\pi(w)$ is a positive integer (or infinite if $w$ is a basis element) \cite{puder_primitive_2014}. We will not need its definition here, but we make the following remarks.
\begin{enumerate}[(i)]
\item The primitivity rank $\pi(w)$ can be computed algorithmically: see \cite[Lemma 6.4]{louder-wilton2} or \cite[Appendix A]{puder_primitive_2014} for algorithms, and see \cite{cashen_short_2020} for extensive computations.
\item Puder also showed that a generic word $w$ in a free group of rank $r$ has $\pi(w)=r$ \cite[Corollary 8.3]{puder_expansion_2015}, so generic one-relator groups with more than two generators have negative immersions. (A more refined result was later proved by Kapovich \cite{kapovich_primitivity_2022}.)
\item Combining \cite[Corollary 8.3]{puder_expansion_2015} with Theorems \ref{thm: Coherence} and \ref{thm: Negative immersions} recovers the result of Sapir--Spakulova that random one-relator groups with at least three generators are coherent. Instead of \cite[Corollary 8.3]{puder_expansion_2015}, one can also appeal to the work of Arzhantseva--Olshanskii, who proved that a random group with at least 3 generators is 2-free \cite{arzh-olsh}.
\end{enumerate}
\end{remark}

Condition (ii) of Theorem \ref{thm: Negative immersions} is a geometric condition on the 2-complex $X$.  An \emph{immersion}  is a combinatorial map of 2-complexes that is locally injective.  Roughly, a 2-complex $X$ has \emph{negative immersions} if, for any finite, connected 2-complex $Y$ immersing into $X$, either
\begin{enumerate}[(i)]
\item $\chi(Y)<0$, or
\item $Y$ is homotopic to a graph.
\end{enumerate}
See Definition \ref{def: Negative immersions} for a precise definition. As usual, $\chi(Y)$ denotes the Euler characteristic of $Y$.  The negative immersions property is motivated by an analogy with the concept of \emph{non-positive immersions}, which is defined similarly but with (i) replaced by the inequality $\chi(Y)\leq 0$. The presentation complex of any torsion-free one-relator group has non-positive immersions \cite{helfer-wise,louder-wilton}.

The proof of Theorem \ref{thm: Coherence} leads to various other strong constraints on the subgroups of one-relator groups with negative immersions. The next theorem summarises these.

\begin{introthm}
  \label{thm: Other subgroup constraints}
  Let $G=F/\ncl{w}$ be a one-relator group with negative immersions.
  \begin{enumerate}[(i)]
   \item Every finitely generated, {one-ended} subgroup $H$ of  $G$ is co-Hopfian, i.e.\ $H$ is not isomorphic to a proper subgroup of itself. In particular, {if $G$ itself is one-ended then it} is co-Hopfian.
   \item For any integer $r$, there are only finitely many conjugacy classes of {finitely generated, one-ended} subgroups $H$ of $G$  such that the abelianisation of $H$ has rational rank at most  $r$.
    \item Every finitely generated {non-cyclic} subgroup $H$ of $G$ is large in the sense of Pride, i.e.\ there is a subgroup $H_0$ of finite index in $H$ that surjects a non-abelian free group.
\end{enumerate}
\end{introthm}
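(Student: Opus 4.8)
The plan is to deduce all three statements from a single structural mechanism: the strengthening of negative immersions to \emph{uniform} negative immersions announced in the abstract. The key quantitative input will be a uniform constant $\epsilon>0$, depending only on $w$, such that every finite connected 2-complex $Y$ immersing into $X$ which is not homotopic to a graph satisfies $\chi(Y) \leq -\epsilon \cdot \|Y\|$, where $\|Y\|$ is some measure of the combinatorial size of $Y$ (e.g.\ the number of $2$-cells, or the number of edges in the underlying graph). Granting this, a finitely generated subgroup $H \leq G$ is represented by an immersed complex $Y \to X$ with $\pi_1(Y) \cong H$ after passing to the core; if $H$ is non-cyclic and freely indecomposable then $Y$ is not homotopic to a graph, so $\chi(H) = \chi(Y) \leq -\epsilon\|Y\|$, which simultaneously bounds the size of $Y$ in terms of $\chi(H)$ and forces $\chi(H) < 0$. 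Coherence (Theorem A) then follows because $H$ is the fundamental group of a compact complex; but here we need the finer control for Theorem B.

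For part (i), co-Hopfianity: suppose $H$ is finitely generated, non-cyclic, freely indecomposable, and $\phi \colon H \hookrightarrow H$ is an injection onto a proper subgroup. Realise $H$ by the core complex $Y \to X$. An injection $H \to H$ of infinite-index image would drop $\chi$ in absolute value while a finite-index proper self-embedding is impossible in groups with $\chi \neq 0$ by multiplicativity of Euler characteristic; the nontrivial case is an infinite-index self-embedding, which we rule out by observing that the nested sequence $H \supseteq \phi(H) \supseteq \phi^2(H) \supseteq \cdots$ would produce immersed complexes $Y_n \to X$ all with $\pi_1 \cong H$, hence all with the same Euler characteristic $\chi(H)$, yet with $\|Y_n\| \to \infty$ because infinite descent forces increasing complexity — contradicting the uniform inequality $|\chi(Y_n)| \geq \epsilon \|Y_n\|$. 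Applying this with $H = G$ (which is non-cyclic, and freely indecomposable since $w$ is not a proper power in a free factor) gives the co-Hopf property for $G$ itself.

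For part (ii), finiteness: the rational rank of $H^{\mathrm{ab}}$ is $b_1(H) = b_1(Y)$, and for a core complex $\chi(Y) = 1 - b_1(Y) + (\text{number of }2\text{-cells})$ combined with the uniform inequality bounds the number of $2$-cells, hence $\|Y\|$, in terms of $r$. Thus up to the natural equivalence there are only finitely many immersed complexes $Y \to X$ of bounded size, and each determines the conjugacy class of $\pi_1(Y) = H$ in $G$; a standard argument (each conjugacy class of subgroups is represented by only finitely many such $Y$, since $Y$ is the core) gives the finiteness statement. For part (iii), largeness in the sense of Pride: since $H$ is finitely generated with $\chi(H) < 0$ whenever it is non-cyclic (splitting off free factors first, to which largeness passes trivially), and $H$ embeds as $\pi_1$ of a compact complex with $\chi < 0$ all of whose immersed subcomplexes also have non-positive Euler characteristic, one applies the standard criterion — a group acting suitably with negative Euler characteristic and the non-positive immersions property on all covers has a finite-index subgroup surjecting a free group (this is essentially the argument that such groups are not "just infinite" and in fact virtually surject $\mathbb{Z} * \mathbb{Z}$, going back to the treatment of largeness for one-relator groups via Euler characteristic).

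The main obstacle will be establishing the uniform negative immersions inequality itself, i.e.\ promoting the qualitative dichotomy of Theorem \ref{thm: Negative immersions} to a linear bound with a uniform constant. As the abstract indicates, this is where the linear-programming/rationality theorem enters: one sets up the space of immersed complexes (or rather their "weighted" relative versions, tracking how $2$-cells attach along the graph) as the integer points of a rational polyhedral cone, expresses $\chi$ as a linear functional, and uses a rationality theorem to conclude that the infimum of $\chi/\|Y\|$ over non-degenerate $Y$ is a strictly negative rational number, attained in a controlled way. Verifying that negative immersions (the strict inequality $\chi < 0$ for all non-degenerate $Y$) exactly forces this infimum to be negative — rather than merely non-positive with values accumulating at $0$ — is the crux, and is precisely what the rationality of the relevant optimisation problem buys us.
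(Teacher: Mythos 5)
Your overall architecture---uniform negative immersions, realisation of finitely generated freely indecomposable subgroups by compact immersed complexes, and a size bound in terms of $b_1$---matches the paper's, but there are two genuine gaps.

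First, your co-Hopf argument in the infinite-index case is inverted. You claim the chain $H\supseteq\phi(H)\supseteq\phi^2(H)\supseteq\cdots$ yields complexes $Y_n$ with $\|Y_n\|\to\infty$ ``because infinite descent forces increasing complexity'', contradicting the uniform inequality. There is no reason for the complexity to grow; on the contrary, since every $Y_n$ has $b_1(Y_n)=b_1(H)$, the uniform inequality forces the number of $2$-cells (hence the whole size of $Y_n$) to be \emph{bounded}---this is exactly Lemma \ref{lem: Bound on immersions}. The contradiction must therefore come from the opposite direction: boundedness gives only finitely many isomorphism classes of immersions, so by pigeonhole infinitely many of the $Y_n\immerses X$ coincide, and one then applies the fact that an immersion of a compact connected complex to itself is an isomorphism (\cite[Lemma 6.3]{wise_sectional_2004}) to the composite $Y_{n+k}\immerses\cdots\immerses Y_n$. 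For this one needs the $Y_n$ arranged in a tower of immersions $Y_{n+1}\immerses Y_n$, not merely each immersed into $X$; that tower is what Theorem \ref{thm: Realising subgroups by immersions} produces inductively. Your finite-index case also silently assumes $H$ has a well-defined multiplicative Euler characteristic (an asphericity input you do not supply); the paper's argument needs no such case division.

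Second, you take for granted that a finitely generated freely indecomposable subgroup is ``represented by an immersed complex $Y\to X$ with $\pi_1(Y)\cong H$ after passing to the core''. For a general subgroup of a $2$-complex group no compact core exists a priori; producing one is the main content of the deduction (Theorem \ref{thm: Realising subgroups by immersions}). The paper gets it by exhausting $H$ as a direct limit of $\pi_1$-surjective immersions of compact \emph{irreducible} complexes (Lemma \ref{lem: Irreducible folding sequence}, which itself needs the irreducible-core machinery of Lemmas \ref{lem: Irreducible core} and \ref{lem: Universal property of irreducible cores}), bounding their Betti numbers, and using Lemma \ref{lem: Bound on immersions} plus the self-immersion rigidity above to show the sequence stabilises. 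Granting that realisation, your parts (ii) and (iii) are essentially correct: (ii) is exactly the paper's argument, and (iii) is made precise by the deficiency computation $m-n=1-\chi(Y)\geq 2$ followed by the Baumslag--Pride theorem, which is what your sketch gestures at.
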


{In an earlier paper, the authors conjectured that every one-relator group $G$ with negative immersions is hyperbolic \cite[Conjecture 1.9]{louder-wilton2}. Theorem \ref{thm: Other subgroup constraints} proves various consequences of this conjecture. Specifically, item (i) is consistent with Sela's theorem that one-ended hyperbolic groups are co-Hopfian \cite[Theorem 4.4]{sela_structure_1997}, while item (ii) is a stronger version of a subgroup rigidity theorem of Gromov \cite[5.3.C']{gromov_hyperbolic_1987}, Rips--Sela \cite[Theorem 7.1]{rips_structure_1994} and Delzant \cite{delzant_limage_1995}. Note, however, that items (ii) and (iii) are not implied simply by the fact that $G$ is hyperbolic.  

Recently, Linton has proved the authors' conjecture \cite[Theorem 8.2]{linton_one-relator_2022}, making use of Theorem \ref{thm: Other subgroup constraints} in his proof. Linton's theorem is especially remarkable in light of the recent discovery by Italiano--Martelli--Migliorini of higher-dimensional non-hyperbolic groups of finite type that do not contain Baumslag--Solitar subgroups \cite[Corollary 3]{italiano_hyperbolic_2023}.}


The main new technical ingredient in the proof of Theorem \ref{thm: Coherence} is a uniform version of negative immersions, which provides a negative upper bound for a normalised version of the Euler characteristic. Roughly, a 2-complex $X$ has \emph{uniform negative immersions} if there is $\epsilon>0$ such that for any finite, connected 2-complex $Y$ immersing into $X$, either
\begin{enumerate}[(i)]
\item 
\[
\frac{\chi(Y)}{\#\{2\mathrm{-cells\,of}\,Y\}}\leq -\epsilon\,,
\]
or
\item $Y$ can be simplified by a homotopy. (In the language developed below, $Y$ is \emph{reducible}.)
\end{enumerate}
The reader is referred to Definition \ref{def: Uniform negative immersions} for a precise definition.

\begin{remark}\label{rem: Wise's negative immersions}
In Wise's work on the coherence of one-relator groups with torsion \cite{wise_coherence_2020} (and also implicitly in the authors' \cite{louder_one-relator_2020}), a key role is played by a property that Wise also calls `negative immersions', but that is closer in spirit to (indeed, stronger than) the notion of \emph{uniform} negative immersions used here. See Section \ref{sec: Wise's notion of sectional curvature} for a comparison between the notion of (uniform) negative immersions used in this paper and Wise's notion.
\end{remark}

The main new technical ingredient towards the proof of Theorem \ref{thm: Coherence} is the following result, which establishes uniform negative immersions for the 2-complexes of interest to us.

\begin{introthm}\label{thm: Uniform negative immersions}
Let $X$ be the presentation complex of a one-relator group. If $X$ has negative immersions then it has uniform negative immersions.
\end{introthm}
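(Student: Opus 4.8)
The plan is to reduce uniform negative immersions to a finite linear‑programming problem whose optimum negative immersions forces to be strictly negative. Since the conclusion is automatic for reducible complexes, it suffices to produce a single $\epsilon>0$ with $\chi(Y)\le-\epsilon\cdot F(Y)$ — here $F(Y)$ is the number of $2$‑cells — for every connected irreducible $Y$ immersing into $X$ with $F(Y)\ge1$; after folding we may assume the $1$‑skeleton of $Y$ immerses into the presentation rose, that each $2$‑cell is glued along an immersed loop spelling $w^{\pm1}$, and (by a routine reduction) that $Y$ has no edges disjoint from the $2$‑cells. The basic finiteness input is that, because $w$ has a single fixed length, a bounded‑radius neighbourhood of a $2$‑cell of $Y$ — recording how it overlaps its neighbours along edges and at vertices — has only finitely many isomorphism types $\tau_1,\dots,\tau_N$.

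I would attach to each such $Y$ its type‑count vector $\mathbf n(Y)=(n_1(Y),\dots,n_N(Y))\in\NN^N$. Distributing vertices, edges and $2$‑cells over the local types expresses $\chi$ as an honest linear functional, $\chi(Y)=\Lambda(\mathbf n(Y))$, while $F(Y)=\|\mathbf n(Y)\|_1$. The \emph{rationality theorem} to prove is that the set of realised vectors $\mathbf n(Y)$ is contained in a rational polyhedral cone $C\subseteq\RR^N$, cut out by the local compatibility conditions — that adjacent blocks agree along shared edges and vertices and that no block displays a reducing configuration — and, crucially, that every extreme ray of $C$ is spanned by $\mathbf n(Y_0)$ for a genuine irreducible complex $Y_0$ containing a $2$‑cell. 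Granting this, $\sup_Y\chi(Y)/F(Y)$ is bounded above by the optimum of the linear‑fractional programme $\max\{\Lambda(\mathbf n)/\|\mathbf n\|_1:\mathbf n\in C\}$; being the maximum of a linear function over the polytope $C\cap\{\|\mathbf n\|_1=1\}$, this optimum is attained at a vertex, i.e.\ at a normalised extreme ray $\mathbf n(Y_0)$, and therefore equals the rational number $\chi(Y_0)/F(Y_0)$. By Theorem~\ref{thm: Negative immersions}, negative immersions gives $\chi(Y_0)\le-1<0$ for this $Y_0$; since only finitely many extreme rays occur, the optimum is a negative rational $-\epsilon$, and $\chi(Y)\le-\epsilon F(Y)$ for every irreducible $Y$, which is exactly uniform negative immersions.

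The main obstacle is the rationality theorem, and inside it the realisation (``rounding'') step. The compatibility conditions defining $C$ are visibly necessary but a priori not sufficient, and one must show that a nonnegative rational solution of the linear system — in particular an extreme ray output by the linear programme — can, after clearing denominators, be assembled into an actual immersed $2$‑complex. Only then may negative immersions be invoked at the vertices of the programme to certify that the optimum is \emph{strictly} negative rather than merely non‑positive; without this realisation one recovers only the qualitative inequality $\chi(Y)<0$ and not the desired uniform linear bound. Carrying this out requires pushing the structure theory of immersions into a one‑relator presentation complex — folding, the combinatorics of how relator discs can overlap, and the control on vertex links — far enough to obtain a genuine finite generating system of local blocks; a subsidiary delicate point is choosing the radius defining ``local type'' together with the angle assignment realising $\Lambda$ so that $N$ stays finite while the vertex contributions are accounted for correctly.
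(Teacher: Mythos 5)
Your overall strategy --- encode local data into finitely many types, express $\chi$ and the face count as linear functionals on a rational cone cut out by compatibility (gluing) conditions, maximise by linear programming, realise the optimal vertex by an honest complex, and then invoke negative immersions at that single complex to get a strictly negative rational optimum --- is exactly the strategy of the paper (Theorem~\ref{thm: Rationality theorem} and Proposition~\ref{prop: Surjectivity of map to weight vectors}). But the step you correctly flag as ``the main obstacle,'' the realisation of extreme rays, is a genuine gap as you have set things up, and the framework you chose makes it unfixable without modification. If you insist that the cone parametrise honest \emph{immersions} $Y\immerses X$ and that the objective be $\chi(Y)/\#\{2\text{-cells}\}$, then an integral point of the cone, reassembled by gluing local blocks, will in general only yield a complex whose faces wrap \emph{multiple times} around the relator disc --- a branched immersion, not an immersion --- and there is no reason the extreme rays should be spanned by degree-one examples. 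The paper's essential move, absent from your proposal, is to enlarge the category from the outset to \emph{face-essential maps} (Definition~\ref{def: Face-essential map}), replace $\chi(Y)$ by the total curvature $\tau(Y)=\deg(f)+\chi(Y^{(1)})$ and the face count by $\deg(f)$; these agree with your quantities on immersions but are the ones that are actually linear on the cone and realised by the glued-up extremal examples. This in turn forces a second ingredient you omit: to conclude $\tau(Y_{\max})<0$ at the optimum you cannot quote Theorem~\ref{thm: Negative immersions} (which concerns immersions and $\chi$); you need the strengthening of negative immersions to branched immersions and then to face-essential maps (Theorem~\ref{thm: Precise negative immersions statement for branched immersions} and Corollary~\ref{cor: Precise negative immersions statement for face-essential maps}).

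A secondary gap: your local types ignore the distinction between ``irreducible'' and ``visibly irreducible.'' Irreducibility is defined only after unfolding (Lemma~\ref{lem: Unfolding all the way}), so the condition ``no block displays a reducing configuration'' cannot be imposed on the folded complex directly; the paper's pieces therefore carry a two-level structure $Y(P)\to Z(P)\to X$, recording both the visibly irreducible unfolded object and its folded, branch-immersed image, and visible irreducibility is certified locally on the $Y$-level (each edge and vertex having at least two preimages in the faces, no local separation). Your single-level, disc-centred blocks do not capture this, and without it the cone either fails to contain all irreducible examples or fails to exclude reducible ones.
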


{The deduction of Theorem \ref{thm: Coherence} from Theorem \ref{thm: Uniform negative immersions} is similar to \cite{louder_one-relator_2020} and \cite{wise_coherence_2020}, with an additional lemma (Lemma \ref{lem: Irreducible core}) to account for the subtle nature of item (ii) in the definition.

In the terminology of \cite{wilton_rational_2022}, Theorem \ref{thm: Uniform negative immersions} asserts that one-relator presentation complexes  with negative immersions have \emph{negative irreducible curvature}.}

Theorem \ref{thm: Uniform negative immersions} follows from a rationality theorem, Theorem \ref{thm: Rationality theorem}, similar to those that appear in Calegari's proof of the rationality of stable commutator length in free groups \cite{calegari_stable_2009} and in the second author's work on surface subgroups of graphs of free groups \cite{wilton_essential_2018}. 

The first idea is to expand the category of maps considered from immersions $Y\immerses X$ to the more general class of \emph{face-essential} maps $f:Y\to X$. This class of maps allows branching in the centre of the 2-cells, and thus each 2-cell $C$ of $Y$ comes with a well-defined \emph{degree} $\deg_C(f)$. The degree of the map $f$ is then the sum of the degrees of the 2-cells of $Y$, and we adjust the relevant quantities to take it into account. The role of Euler characteristic is taken by the \emph{total curvature}
\[
\tau(Y):=\deg(f)+\chi(Y^{(1)})
\]
where $\chi(Y^{(1)})$ is the Euler characteristic of the 1-skeleton of $Y$, and the role of the number of 2-cells is taken by the degree $\deg(f)$. Note that, when $f$ is an immersion, these reduce to the usual quantities: $\tau(Y)=\chi(Y)$, while $\deg(f)$ is the number of 2-cells of $Y$.  With these adjustments, the supremum of the quantity
\[
\frac{\tau(Y)}{\deg(f)}
\]
over all face-essential maps $f:Y\to X$ with $Y$ irreducible can be computed as a solution to a rational linear programming problem; this is the content of Theorem \ref{thm: Rationality theorem}. 

In particular, this supremum is in fact a maximum. The results of \cite{louder-wilton2} then imply that this maximum is strictly negative, so provides the uniform bound $-\epsilon$ needed to prove Theorem \ref{thm: Uniform negative immersions}.

This proof provides no information about the value of $\epsilon$, but we make the following conjecture. A face-essential map of 2-complexes $f:Y\to X$ is called \emph{essential} if, for each component of $Y$, the induced map on 1-skeleta induces an injective homomorphism on fundamental groups.

\begin{conjecture}\label{conj: Stability conjecture}
Let $G=F/\ncl{w}$ be a one-relator group and let $X$ be the associated presentation complex. The supremum of the quantity
\[
\frac{\tau(Y)}{\deg(f)}
\]
across all {essential} maps $f$ from irreducible 2-complexes $Y$ to $X$ is equal to $2-\pi(w)$.
\end{conjecture}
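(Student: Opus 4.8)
The plan is to prove the two inequalities for $s(X):=\sup\{\tau(Y)/\deg(f)\}$ separately; recall that, by Theorem~\ref{thm: Rationality theorem}, $s(X)$ is attained and rational. If $w$ is a basis element of $F$ there are no irreducible face-essential maps into $X$, so $s(X)=-\infty=2-\pi(w)$; assume therefore that $n:=\pi(w)<\infty$.

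\textbf{The lower bound $s(X)\ge 2-\pi(w)$.} I would exhibit a single face-essential map attaining the ratio $2-n$. Choose a critical subgroup $H\le F$, so that $w\in H$, $w$ is not primitive in $H$, and $\rank(H)=n$. Let $\Delta$ be the Stallings core graph of $H$ with its immersion $\Delta\to X^{(1)}$, let $\gamma\subset\Delta$ be the reduced loop spelling $w$, and set $Y_0:=\Delta\cup_\gamma D^2$ with $f_0\colon Y_0\to X$ the immersion on the $1$-skeleton wrapping the unique $2$-cell once around the $2$-cell of $X$. Then $\deg(f_0)=1$ and $\tau(Y_0)=1+\chi(\Delta)=1+(1-n)=2-n$, so its ratio is exactly $2-\pi(w)$. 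The map $f_0$ is face-essential because its $2$-cell has positive degree; the remaining point is that $Y_0$ is \emph{irreducible}, which I would deduce from the structure of critical subgroups in \cite{louder-wilton2}: no fold is available since $\Delta$ is folded, and any homotopy simplifying the single $2$-cell would present $w$ as primitive in $H$, or in a proper free factor of $H$, contradicting criticality. (If this fails for some critical $H$, replace it by one that is minimal in a suitable sense.)

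\textbf{The upper bound $s(X)\le 2-\pi(w)$.} This is the substantial direction: it demands the per-$2$-cell estimate $\tau(Y)\le(2-\pi(w))\deg(f)$ for every irreducible face-essential $f\colon Y\to X$, strictly stronger than the qualitative $\chi(Y)<0$ of Theorem~\ref{thm: Negative immersions}. Since Theorem~\ref{thm: Rationality theorem} realises $s(X)$ as the optimum of a finite rational linear program, it suffices to produce a dual certificate for the bound $2-\pi(w)$: concretely, a discrete Gauss--Bonnet scheme that distributes $\tau(Y)=\sum_C\deg_C(f)+\chi(Y^{(1)})$ over the $2$-cells of $Y$ --- splitting the $+1$ of each vertex and the $-1$ of each edge of $Y^{(1)}$ among the incident corners --- so that each cluster in an appropriate partition of the $2$-cells receives curvature at most $2-\pi(w)$ times its degree, the bound on a minimal cluster reducing to the definition of $\pi(w)$ applied to the subgroup of $F$ that cluster carries. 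Equivalently, and closer to \cite{louder-wilton2}, one can induct on $\deg(f)$ among the maximisers: the case $\deg(f)=1$ is precisely $\rank(\pi_1(Y^{(1)}))\ge\pi(w)$, which holds because irreducibility forces $Y^{(1)}\to X^{(1)}$ to be an immersion onto a subgroup in which $w$ is not primitive; for $\deg(f)\ge2$ one splits $Y$ into irreducible pieces of strictly smaller degree across which $\tau$ and $\deg$ add.

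\textbf{The main obstacle.} The crux is the clustering, equivalently the inductive splitting: one must partition (or decompose) an arbitrary irreducible face-essential $Y$ so that every piece carries a subgroup of $F$ in which $w$ is not primitive, while keeping the curvature additivity and the degree count exact --- this is exactly the normalisation that the tower arguments of \cite{louder-wilton2} discard when proving only $\chi(Y)<0$, and it is not clear which piece of that machinery upgrades to deliver it. Making the decomposition compatible with the branching permitted in the centres of $2$-cells (absent for honest immersions) is a further layer, as is confirming that $Y_0$ above, or a variant, really is irreducible. A positive resolution would in particular sharpen Theorem~\ref{thm: Uniform negative immersions} to the explicit value $\epsilon=\pi(w)-2$.
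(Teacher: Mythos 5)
The statement you are proving is Conjecture \ref{conj: Stability conjecture}, which the paper explicitly leaves open: no proof is given there, and the authors note both that their methods provide ``no information about the value of $\epsilon$'' and that the conjecture would imply the open conjecture of Heuer and Hanany--Puder relating $\pi(w)$ to $\operatorname{scl}(w)$. So there is no proof in the paper to compare yours against, and a complete argument would necessarily go beyond the paper's machinery. Your lower bound is essentially right and is surely the motivation for the conjectured value: taking $H\leq F$ of rank $n=\pi(w)$ containing $w$ non-primitively, the complex $Y_0=\Delta\cup_\gamma D^2$ has $\deg=1$ and $\tau(Y_0)=2-n$, and its irreducibility follows cleanly from Proposition \ref{prop: Whitehead} -- a non-trivial splitting $H=A*B$ with $w$ conjugate into $A$ would make $w$ either primitive in $A$ (hence in $H$, contradiction) or non-primitive in a subgroup of rank $<\pi(w)$ (contradicting the definition of $\pi$). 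Two small caveats: you should check the degenerate case ($w$ a basis element) more carefully, since ``no irreducible face-essential maps exist'' requires an argument about folded representatives rather than being immediate; and the case $\pi(w)=1$ ($w$ a proper power) should be noted to work as well.

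The genuine gap is the upper bound, as you acknowledge, and it is not a technical gap but the entire content of the conjecture. Theorem \ref{thm: Rationality theorem} reduces the supremum to a finite linear program and Corollary \ref{cor: Precise negative immersions statement for face-essential maps} shows the optimum is negative, but nothing in the paper, nor in \cite{louder-wilton2}, produces the per-degree estimate $\tau(Y)\leq(2-\pi(w))\deg(f)$. Your two proposed mechanisms both founder on the same point you identify: a dual/Gauss--Bonnet certificate requires a distribution of curvature to clusters of $2$-cells each of which ``carries'' a subgroup witnessing $\pi(w)$, and the inductive route requires a decomposition of an arbitrary irreducible face-essential $Y$ into lower-degree irreducible pieces over which $\tau$ and $\deg$ are additive; no such decomposition is available, and the tower/linear-algebra arguments of \cite{louder-wilton2} that prove $\tau(Y)<0$ do not track the number of $2$-cells. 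Since a proof would settle the scl conjecture cited after Conjecture \ref{conj: Stability conjecture}, you should regard this step as an open problem rather than a deferrable lemma; as written, your argument establishes only the inequality $\sup\tau(Y)/\deg(f)\geq 2-\pi(w)$.
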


In an earlier version of this paper, we stated a stronger version of Conjecture \ref{conj: Stability conjecture} for the supremum across all face-essential maps. The next example shows that this stronger conjecture is false.

\begin{example}\label{eg: Unstable face-essential map}
Let $X$ be the presentation complex of 
\[
\langle a,b,c,d\mid a^4b^2c^2d^2a^3b^2c^2d^2\rangle
\]
and let $Y$ be the presentation complex of
\[
\langle x,y,z\mid x^2yzyxz\rangle\,.
\]
Write $w$ for the relator of $X$ and $u$ for the relator of $Y$. The homomorphism of free groups defined by
\[
x,y\mapsto a\,,\,z\mapsto ab^2c^2d^2
\]
sends $u$ to $w$ and so, after subdividing the 1-skeleton of $Y$ appropriately, defines a combinatorial map of 2-complexes $f:Y\to X$. Since this morphism sends the single 2-cell of $Y$ homeomorphically to the single 2-cell of $X$, $f$ is face essential (see Definition \ref{def: Face-essential map}) and $\deg(f)=1$. Furthermore, the Whitehead graph of $Y$ consists of two squares glued along an edge, so $Y$ is (visibly) irreducible (see Definition \ref{def: Visibly reducible and visibly irreducible}). 

However,
\[
\frac{\tau(Y)}{\deg(f)}=\frac{\deg(f)+\chi(Y_{(1)})}{\deg(f)}=-1
\]
while a calculation using Puder's algorithm shows that $\pi(w)=4$ \cite{cashen_personal_2023}. Thus, $f$ provides an example of a face-essential map from an irreducible 2-complex $Y$ such that
\[
\frac{\tau(Y)}{\deg(f)}>2-\pi(w)\,.
\]
In particular, the `essential' hypothesis in Conjecture \ref{conj: Stability conjecture} cannot be relaxed to `face-essential'.
\end{example}



Conjecture \ref{conj: Stability conjecture} has precursors in the literature. The following conjecture of Heuer posits a surprising relationship between stable commutator length and Puder's primitivity rank \cite[Conjecture 6.3.2]{heuer_constructions_2019}. (The same conjecture was made independently by Hanany and Puder \cite[Conjecture 1.14]{hanany_word_2020}.)

\begin{conjecture}[Heuer's conjecture]\label{conj: Heuer's conjecture}
If $w$ is a non-trivial element of the commutator subgroup of a free group $F$ then
\[
2\scl(w)\geq\pi(w)-1\,.
\] 
\end{conjecture}

Conjecture \ref{conj: Stability conjecture} implies Heuer's conjecture, and indeed Theorem \ref{thm: Negative immersions}, together with Calegari's rationality theorem for stable commutator length \cite{calegari_stable_2009}, unconditionally implies a consequence of Heuer's conjecture.

\begin{introcor}\label{cor: Weak Heuer}
If $w$ is an element of the commutator subgroup of a free group $F$ and $\pi(w)>2$ then
\[
2\scl(w)>1\,.
\]
\end{introcor}

For context, Duncan--Howie proved that $\scl(w)\geq 1/2$ for any non-trivial element of $[F,F]$ \cite{duncan-howie}. The statement of Corollary \ref{cor: Weak Heuer} was noted by Heuer and L\"oh \cite[Question 1.3(4)]{heuer_simplicial_2022}, but we will give a complete proof  in \S\ref{sec: Stable commutator length} below. The proof of Corollary \ref{cor: Weak Heuer} and the deduction of Corollary \ref{conj: Heuer's conjecture} from Conjecture \ref{conj: Stability conjecture} both use Proposition \ref{prop: Essential map realising scl}, which summarises an argument of Calegari \cite[Lemma 2.7]{calegari_surface_2008}.

The paper is structured as follows. Section \ref{sec: Preliminaries} is devoted to setting up the framework in which we describe 2-complexes and maps between them. It is convenient to work with \emph{pre-complexes}, which describe pieces that can be glued together to form complexes.  Section \ref{sec: Irreducible complexes} describes the notion of \emph{irreducible complexes} that plays a crucial role in our definitions, and enables us to define (uniform) negative immersions.  In Section  \ref{sec: A linear system}, we set up the linear system needed to prove the rationality theorem, from which Theorem \ref{thm: Uniform negative immersions} follows.  The brief Section \ref{sec: The case with torsion} explains why one-relator groups with torsion also have uniform negative immersions. Finally, theorems about the the subgroup structure of the fundamental groups of 2-complexes with uniform negative immersions are proved in Section \ref{sec: Uniform negative immersions and subgroups}, and Theorems \ref{thm: Coherence} and \ref{thm: Other subgroup constraints} follow immediately.

\subsection*{Acknowledgements}

The authors would like to thank Chris Cashen for computing the
primitivity rank of Example \ref{eg: Unstable face-essential map} and
the anonymous referee for numerous suggested improvements to the
exposition. Louder also thanks Mladen Bestvina for explaining the
tower, Wise's ideas, and the Scott lemma to him back in Winter
2004.

\section{Preliminaries}\label{sec: Preliminaries}

In this section, we give some foundational definitions in order to fix notation.

\subsection{Graphs and pre-graphs}
 
 Graphs are our fundamental objects of study. A subgraph is always a closed subspace in any of the standard combinatorial frameworks, but it is also convenient to discuss open sub-objects of graphs. We therefore work with the slightly more general notion of \emph{pre-graphs}.

\begin{definition}[Pre-graph]\label{def: Pre-graph}
  A \emph{pre-graph} is a tuple
  \[
  G=(V_G,E_G,\iota_G,\tau_G)=(V,E,\iota,\tau)
  \]
  where:
  \begin{enumerate}[(i)]
  \item $V$ and $E$  are sets (the \emph{vertices} and \emph{edges} of $G$, respectively);
  \item $E^{\iota}_G=E^\iota$  and $E^{\tau}_G=E^\tau$ are subsets of $E$; and
  \item  $\iota\colon E^{\iota}\to V$  and $\tau\colon E^{\tau}\to V$ are maps.
  \end{enumerate}
 The subscripts will be suppressed only when there is no danger of confusion. A \emph{morphism} $f$ from a pre-graph $G$ to a pre-graph $H$ consists of maps $f:V_G\to V_H$ and $f:E_G\to E_H$ such that $f(E_G^\iota)\subseteq E_H^\iota$ and $f(E_G^\tau)\subseteq E_H^\tau$, and such that $\iota_H\circ f=f\circ \iota_G$ and $\tau_H\circ f=f\circ \tau_{{G}}$.
\end{definition}

The \emph{boundary} of a pre-graph $G$ is defined to be
\[
\partial G = E\setminus E^{\iota}\cap E^{\tau}\,.
\]

\begin{definition}[Graph]\label{def: Graph}
  {An (oriented)} \emph{graph} is a pre-graph with empty boundary.
\end{definition}

The reader is referred to the classic paper of Stallings \cite{stallings-folding} for the standard theory of the topology of graphs. Stallings phrases his arguments in terms of Serre graphs, which differ slightly from the model of graphs we adopt here, but his arguments are easily adapted to our context and we leave the details to the reader.  Stallings does not use pre-graphs, but again the theory is easy to adapt.

\begin{definition}[Immersion]\label{def: Immersion}
 A morphism $f\colon G\to H$ of pre-graphs is an \emph{immersion} if, for
  all edges $e$ and $e'$ of $G$, if
  $\alpha(e)=\alpha(e')$ for some $\alpha\in\{\iota_G,\tau_G\}$ and
  $f(e)=f(e')$, then $e=e'$. An immersion $f\colon G\to H$ of pre-graphs is \emph{closed} if $f(\partial G)\subseteq\partial H$.
\end{definition}
  
 The standard fiber product construction for graphs (see \cite[1.3]{stallings-folding}) also makes sense for pre-graphs.

\begin{definition}\label{def: Fiber product}
Consider a pair of morphisms of pre-graphs
\[
f_i:G_i\to H
\]
where $i=1,2$.  The \emph{fiber product}  $G_1\times_H G_2$ has vertex (respectively edge) set defined to be the subset of $V_{G_1}\times V_{G_2}$ (respectively, $E_{G_1}\times E_{G_2}$) on which $f_1$ and $f_2$ agree. The attaching maps are then defined in the natural way wherever they make sense.
\end{definition}

This definition coincides with the standard definition whenever $G_1$, $G_2$ and $H$ are graphs.  It enjoys various well-known properties, which are left as easy exercises: it is a pullback in the category of pre-graphs, the pullback of an immersion is an immersion, etc.

\subsection{Complexes and pre-complexes}

Our graphs will often be the 1-skeleta of 2-dimensional complexes, which can be formalised as immersions from disjoint unions of circles to graphs, where the circles correspond to the boundaries of the 2-cells (which we will always call \emph{faces}). As in the case of graphs, we will sometimes want to discuss open sub-objects, and we therefore introduce \emph{pre-complexes}. In this case, slightly more general pre-graphs can arise as the boundaries of faces.

\begin{definition}\label{def: Linear pre-graph}
 A connected finite pre-graph is a \emph{pre-cycle} if every vertex has exactly two incident edges. A pre-cycle is a \emph{cycle} if it has empty boundary, and is an \emph{open arc} otherwise.
\end{definition}

We can now define a pre-complex using immersions of pre-cycles.

\begin{definition}\label{def: pre-complex}
  A \emph{(2-dimensional) pre-complex} $X$ is a tuple
  \[
  (G_X,S_X,w_X)
  \]
  where
  \[
  G_X = (V_X,E_X,\iota_X,\tau_X)
  \]
is a pre-graph, $S_X$ is a union of pre-cycles, and $w_X\colon S_X\immerses G_X$ is a closed immersion. The pre-graph $G_X$ is the  \emph{1-skeleton} of $X$, $S_X$ is the collection of \emph{{(pre-)faces}} of $X$ and $w_X$ is the \emph{attaching map}. For brevity, we will suppress the subscript when there is no danger of confusion.

A pre-complex $X$ is a \emph{(2-dimensional) complex} if the 1-skeleton $G_X$ is a graph. {Note that, in this case,} every component of $S_X$ is a cycle.
\end{definition}

\begin{figure}[ht]
  \centerline{
    \includegraphics[width=.8\textwidth]{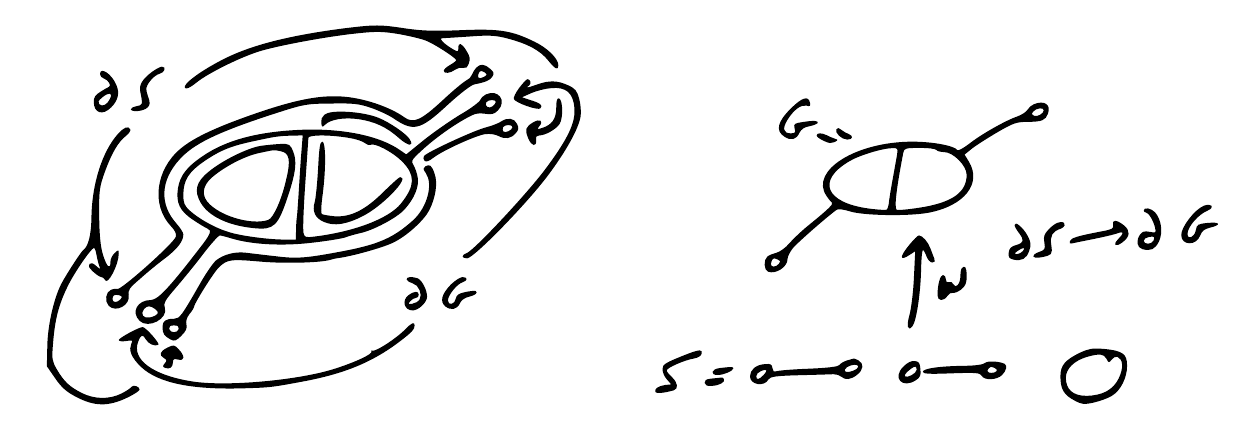}
  }
  \caption{A pre-complex $(G,S,w)$: $G$ is a theta graph with two
    half-edges sticking off, and $S$ is the union of two open arcs and one
    cycle. The map $w$ is a closed immersion, since it maps $\partial S$ to
    $\partial G$.}
  \label{fig:two_complex}
\end{figure}

Graphs and complexes have natural topological realisations, which we denote here with bold letters: the realisation of a graph $G$ is a topological space denoted by $\real G$ and the realisation of a complex $X$ is a topological space denoted by $\real X$. In both cases, a morphism $f$ functorially induces a continuous map of realisations $\real f$.

\subsection{Asterisks and regular neighbourhoods}

It is often fruitful to study graphs and 2-complexes via the local structure of vertices. Pre-complexes provide a convenient framework for this. 

\begin{definition}\label{def: Asterisk}
  An \emph{asterisk} is a pre-graph with one vertex, such that $E=E^{\iota}\sqcup  E^{\tau}$.  If the 1-skeleton of a pre-complex $X$ is an asterisk then $X$ is called an \emph{asterisk pre-complex}. 
\end{definition}
  
Every vertex of a pre-graph or pre-complex now has a regular neighbourhood, based on an asterisk.

 \begin{definition}\label{def: Regular neighbourhood of a vertex}
 Let $G$ be a pre-graph and $v$ be a vertex of $G$.  The \emph{regular neighbourhood} of $v$ is an asterisk $N_G(v)$, with vertex set $\{v\}$ and edge set $\iota^{-1}(v)\sqcup \tau^{-1}(v)$. (Note that the disjoint union here is important: an edge $e$ with $\iota(e)=\tau(e)=v$ defines two edges in $N_G(v)$.)  The attaching maps are the unique maps $\iota^{-1}(v)\to \{v\} $ and  $\tau^{-1}(v)\to \{v\} $
  
Now let $X=(G,S,w)$ be a pre-complex and $v$ be a vertex of $X$.  The \emph{regular neighbourhood} $N=N_X(v)$ of $v$ is a pre-complex with 1-skeleton $G_N=N_G(v)$, i.e.\ the regular neighbourhood of $v$ in the 1-skeleton $G$. The faces of $N_X(v)$ are pulled back from $X$, so $S_N=S\times_G G_N$, and the attaching map $w_N$ is the pullback of $w$, i.e.\ the natural projection $S_N\to G_N$.
 \end{definition}
 
The regular neighbourhood of a vertex $v$ carries the same information as the \emph{link} or \emph{Whitehead graph} of the vertex. Indeed, any asterisk pre-complex defines a Whitehead graph.

\begin{definition}\label{def: Whitehead graph}
Let $X=(G,S,w)$ be an asterisk pre-complex with unique vertex $v$. The corresponding \emph{Whitehead graph} has vertex-set equal to the edges of $X$ and edge-set equal to $w^{-1}(v)$. Each $u\in w^{-1}(v)$ is incident at exactly two edges $\epsilon_+(v)$ and $\epsilon_-(v)$ of $S$, and the attaching maps of the Whitehead graph adjoin $u$ to $w(\epsilon_+(v))$ and $w(\epsilon_-(v))$.
\end{definition}

For $v$ a vertex of a complex $X$, the Whitehead graph of the regular neighbourhood $N(v)$ coincides with the Whitehead graph $\Wh(v)$ of \cite{wilton_essential_2018}. This in turn generalises the graph used by Whitehead in his solution to the orbit problem for automorphism groups of free groups \cite{whitehead_equivalent_1936}.  See also \cite{manning_virtually_2010,cashen_line_2011} for related recent uses of Whitehead graphs.

Edges of pre-complexes also have regular neighbourhoods.

\begin{definition}\label{def: Regular neighbourhoods of edges}
Let $X=(G,S,w)$ be a pre-complex and $e$ be an edge of $X$. The \emph{regular neighbourhood} of $e$ is a pre-complex $N=N_X(e)$ with 1-skeleton $G_N$ equal to the pre-graph that has no vertices and a single edge $\{e\}$. The faces $S_N$ of $N_X(e)$ consists of the pre-image $w^{-1}(e)$, and the attaching map $w$ is the restriction of $w$ to $S_N$.
\end{definition}
 
\begin{remark}\label{rem: Induced morphisms}
A morphism of pre-complexes or pre-graphs induces morphisms on regular neighbourhoods: $f_*:N(x)\to N(f(x))$, for $x$ a vertex or edge of the domain.
\end{remark}

\subsection{Various kinds of maps}

Using regular neighbourhoods, we can define the various kinds of maps that will play a role in the argument.

\begin{definition}\label{def: Branched maps and branched immersions}
A morphism of pre-complexes $f:X\to Y$ is  a \emph{branched map} if the induced map
\[
f_*:N_X(e)\to N_X(f(e))
\]
is injective for every edge $e$ of $X$. If the induced map
\[
f_*:N_X(v)\to N_X(f(v))
\]
is injective for every vertex $v$ of $X$ then $f$ is a \emph{branched immersion}.
\end{definition} 

Note that the realisation of a branched map of complexes $\real{f}:\real{X}\to\real{Y}$ is locally injective everywhere except possibly at the centres of the faces and the vertices, and a branched immersion is locally injective everywhere except possibly at the centres of the faces.

The number of times that a morphism $f$ winds a face around its image is an important quantity, called the \emph{degree}.

\begin{definition}\label{def: Degree and immersions}
Let $f:X\to Y$ be a morphism of 2-complexes. Since $f:S_X\to S_Y$ is a local immersion it is a covering map; in particular, for every path-component $C\subseteq S_X$, the restriction $f:C\to S_Y$ has a well-defined \emph{degree} $\deg_C(f)\in\NN$.  The sum
\[
\deg(f):=\sum_{C\in\pi_0(S_X)}\deg_C(f)
\]
is the \emph{degree of $f$}. If $f$ is a branched immersion and $\deg_C(f)=1$ for every component $C$ then $f$ is called an \emph{immersion}.
\end{definition}

\begin{remark}\label{rem: Immersions in orbicomplexes vs complexes}
In \cite{louder_one-relator_2020}, immersions were defined in the more general context of \emph{orbi}complexes, which allow cone points in the centres of 2-cells. In this paper, we only consider genuine 2-complexes, and the notion of immersions from \cite{louder_one-relator_2020} specialises exactly to the definition given in Definition \ref{def: Degree and immersions}.
\end{remark}

Note that a morphism $f$ is an immersion if and only if its
realisation $\real{f}$ is locally injective.  Sometimes we need to be
able to modify the 1-skeleton of a 2-complex without changing the
faces. The next kind of map makes this possible.

\begin{definition}\label{def: Face equivalence}
A morphism of finite pre-complexes $f:Y\to X$ is called a \emph{face-embedding} if the map of pre-graphs $f:S_Y\to S_X$ is injective, and a \emph{face-equivalence} if it is a bijection.
\end{definition}

\begin{remark}\label{rem: Face embedding}
If $f\colon X\to Y$ is a face-embedding of complexes then the restriction of $\boldsymbol{f}$ to the interiors of the two-cells of $\boldsymbol{X}$ is a homeomorphism onto its image, whence the term. Furthermore, if $f\colon X\to Y$ and $g\colon Y\to Z$ are essential face embeddings, then the composition $g\circ f\colon X\to Z$ is an essential face embedding.
\end{remark}

Stallings famously popularised the folding operation for graphs \cite{stallings-folding}.  In \cite{louder-wilton2,louder_one-relator_2020}, we made use of a folding operation on morphisms of 2-complexes that produces immersions.  Here we will make use of a very natural folding operation on morphisms of 2-complexes that produces branched immersions.

\begin{definition}\label{def: Folding 2-complexes}
Let $f:Y\to X$ be a morphism of pre-complexes.  As in \cite[\S3.3]{stallings-folding}, the induced map of 1-skeleta $f:G_Y\to G_X$ factors as
\[
G_Y\stackrel{f_0}\to G_{\bar{Y}}\stackrel{f_1}{\to} G_X
\]
where $f_0$ is a $\pi_1$-surjective finite composition of folds and $f_1$ is an immersion. The map of faces $f:S_Y\to S_X$ factors through the fiber product $G_{\bar{Y}}\times_{G_X} S_X$ by the universal property; set $S_{\bar{Y}}$ equal to its image, and let $w_{\bar{Y}}:S_{\bar{Y}}\to G_{\bar{Y}}$ be the natural projection. The data $(G_{\bar{Y}},S_{\bar{Y}},w_{\bar{Y}})$ define a pre-complex ${\bar{Y}}$ and $f$ factors as
\[
Y \stackrel{f_0}{\to} {\bar{Y}}\stackrel{f_1}{\to} X
\]
where $f_0$ is a $\pi_1$-surjection and $f_1$ is a branched immersion. The pre-complex ${\bar{Y}}$ is called the \emph{folded representative} of $Y$, and the map $f_1$ is called the \emph{folded representative} of $f$. {The folded representative is characterised by its universal property: if $f$ factors through a branched immersion $Z\to X$ then the folded representative $f_1$ also factors uniquely through $Z\to X$.}
\end{definition}

This enables us to define the last kind of map that we will need.

\begin{definition}\label{def: Face-essential map}
A morphism of finite pre-complexes $f:Y\to X$  is called \emph{face essential} if the map to the folded representative $f_0:Y\to\bar{Y}$ is a face-equivalence. If $X$ and $Y$ are both complexes, this is equivalent to requiring that $\deg(f)=\deg(f_1)$. {If, in addition, the restriction of $f$ to each connected component of $Y$ induces a $\pi_1$-injective map on 1-skeleta, then $f$ is said to be \emph{essential}.}
\end{definition}

Intuitively, a face-essential map is a map such that folding doesn't identify any faces.

\begin{remark}\label{rem: Boundary essential maps and Euler characteristic}
If $Y\to X$ is a face-essential morphism of finite complexes and $\bar{Y}$ is the folded representative of $Y$ then $\chi(\bar{Y})\geq\chi(Y)$, with equality if and only $G_Y\to G_{\bar{Y}}$ is a homotopy equivalence.
\end{remark} 

We will be interested in local ways of certifying that a morphism $f$ is face-essential. This turns out to be easier to do in a two-step process, encapsulated by the notion of a \emph{face immersion}.

\begin{definition}\label{def: Face immersion}
A \emph{face immersion} for a map $f:Y\to X$ is a factorisation
\[
Y\stackrel{f_0}{\to} Z\stackrel{f_1}{\to} X
\]
where $f_0$ is a face-equivalence and $f_1$ is a branched immersion.
\end{definition}

For brevity of notation, we will often abuse notation and use the map $f$ to denote a face immersion for $f$. A face immersion certifies that $f$ is face-essential, because of the following lemma.

\begin{lemma}\label{lem: Face-essential maps and face immersions}
Let $f:Y\to X$ be a morphism of finite 2-complexes.  There is a face immersion for $f$ if and only if $f$ is face-essential.
\end{lemma}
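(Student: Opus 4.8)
The plan is to prove the equivalence in both directions, with the reverse implication being the substantive one. For the easy direction, suppose $f:Y\to X$ is face-essential. Then by definition the map $f_0:Y\to\bar Y$ to the folded representative is a face-equivalence, and $f_1:\bar Y\to X$ is a branched immersion (this is part of the construction in Definition \ref{def: Folding 2-complexes}, where the folded representative is produced precisely as a branched immersion). So the factorisation $Y\stackrel{f_0}{\to}\bar Y\stackrel{f_1}{\to}X$ is itself a face immersion for $f$, and we are done.

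For the forward direction, suppose we are given a face immersion $Y\stackrel{g_0}{\to}Z\stackrel{g_1}{\to}X$, where $g_0$ is a face-equivalence and $g_1$ is a branched immersion. We must show the canonical factorisation through the folded representative $\bar Y$ has $f_0:Y\to\bar Y$ a face-equivalence. The idea is to compare the two factorisations via the universal property built into the construction of $\bar Y$. First I would apply the Stallings factorisation of Definition \ref{def: Folding 2-complexes} to the 1-skeleton map $g_1:G_Z\to G_X$: since $g_1$ is a branched immersion, the induced map on 1-skeleta need not be an immersion, but factors as $G_Z\to G_{Z'}\to G_X$ with the first map a $\pi_1$-surjective composition of folds and the second an immersion. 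Composing, $G_Y\to G_Z\to G_{Z'}$ is a composition of folds followed by... no: more carefully, one folds $G_Y$, and the universal property of folding says that the folded 1-skeleton $G_{\bar Y}$ receives $G_Y$ and maps to $G_X$ by an immersion, and any other factorisation $G_Y\to G_Z\to G_X$ with $G_Z\to G_X$ an immersion factors uniquely through $G_{\bar Y}$. Here $G_Z\to G_X$ is not an immersion but $G_Z\to G_{Z'}\to G_X$ is, so $G_Y\to G_{Z'}$ factors through $G_{\bar Y}$; conversely, since $G_Y\to G_Z$ is already a composition of folds into a complex whose 1-skeleton immerses into... the cleanest statement is that $G_{\bar Y}$ and the folded representative of $G_Z$ coincide, because folding $G_Y$ and then its quotient $G_Z$ gives the same terminal object as folding $G_Z$ directly, and $G_Y\to G_Z$ is already ($\pi_1$-surjective) folds. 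Hence $\bar Y=\bar Z$. Now the key point is that $Z\to X=g_1$ is already a branched immersion, so by Definition \ref{def: Face-essential map} (or directly: a branched immersion is its own folded representative, the fold part being trivial on the relevant fiber-product data) $\bar Z=Z$, whence $\bar Y=Z$. Therefore $f_0:Y\to\bar Y$ is identified with $g_0:Y\to Z$, which is a face-equivalence by hypothesis. So $f$ is face-essential.

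The main obstacle I anticipate is bookkeeping around the face sets rather than the 1-skeleta: the folded representative $\bar Y$ has face graph $S_{\bar Y}$ defined as the image of $S_Y$ in the fiber product $G_{\bar Y}\times_{G_X}S_X$, and I need to check that when $G_Y\to G_Z$ is a composition of folds and $G_Z\to G_X$ is a branched immersion, the induced map $S_Y\to S_Z$ is an isomorphism precisely when $g_0$ is a face-equivalence, compatibly with the image construction defining $S_{\bar Y}$. In other words, I must verify that the face-equivalence hypothesis on $g_0$ is exactly what guarantees no faces get identified in passing to the folded representative, i.e.\ that $S_{\bar Y}\cong S_Y$. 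This amounts to unwinding that the fiber product $G_Z\times_{G_X}S_X$ already realises $S_Z$ (since $g_1$ is a branched immersion, so the faces of $Z$ inject into this fiber product fiberwise), and that $S_Y\to S_Z$ being an isomorphism then forces $S_Y\to S_{\bar Y}$ to be one. I would also need the remark that branched immersions are stable under the relevant constructions, and that $G_Y\to G_Z\to G_X$ with the first map folds and the second a branched immersion really does have folded representative equal to $Z$ — this uses that a branched immersion restricted to 1-skeleta, while not an immersion, becomes one after trivial modification, and the universal property then pins down $\bar Y$. Once these compatibilities are in place the proof is a short diagram chase.
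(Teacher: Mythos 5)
Your easy direction is correct and is exactly the paper's argument. The converse, however, rests on a step that is false in general: you claim $\bar Y=\bar Z$ on the grounds that ``$G_Y\to G_Z$ is already ($\pi_1$-surjective) folds'', deduce $\bar Z=Z$ since $g_1$ is a branched immersion, and conclude $\bar Y=Z$, so that $f_0$ is identified with $g_0$. But a face-equivalence $g_0:Y\to Z$ only requires $S_Y\to S_Z$ to be an isomorphism; it imposes no surjectivity, $\pi_1$-surjectivity, or fold-decomposition condition on the 1-skeleton map $G_Y\to G_Z$. For example, $Z$ could be $Y$ together with an extra 1-skeleton edge disjoint from the image of the attaching maps, in which case $\bar Y$ is a proper sub-object of $Z=\bar Z$ and the identification $\bar Y=Z$ (hence of $f_0$ with $g_0$) fails outright.

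The repair is precisely the weaker statement you defer to your ``obstacles'' paragraph, and it is the paper's proof: the universal property of folding yields only a factorisation $Y\to\bar Y\to Z\to X$ (using that $G_Z\to G_X$ is a graph immersion and that $S_Z$ injects into $G_Z\times_{G_X}S_X$ because $g_1$ is a branched map). Then $S_Y\to S_{\bar Y}$ is surjective by the construction of $S_{\bar Y}$ as an image, and the composite $S_Y\to S_{\bar Y}\to S_Z$ is the bijection $g_0$, so $S_Y\to S_{\bar Y}$ is also injective, hence a face-equivalence and $f$ is face-essential --- no identification of $\bar Y$ with $Z$ is needed. A secondary error: you assert twice that a branched immersion need not induce an immersion on 1-skeleta. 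It does: injectivity of $N_Z(v)\to N_X(g_1(v))$ includes injectivity on the edge set $\iota^{-1}(v)\sqcup\tau^{-1}(v)$ of the asterisk, which is exactly the graph-immersion condition of Definition \ref{def: Immersion}, so your detour through $G_{Z'}$ is unnecessary.
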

\begin{proof}
If $f$ is face essential then the factorisation through the folded representative $\bar{Y}$ defines a face immersion. Conversely, the existence of a face immersion implies that $f$ factors as
\[
Y\to\bar{Y}\to Z\to X
\]
by the universal property of the folded representative. Since $S_Y\to S_{\bar{Y}}$ is surjective { and the bijection $S_Y\to S_Z$ factors through it,} it is also bijective, so $Y\to\bar{Y}$ is { a face equivalence} and hence { according to Definition~\ref{def: Face-essential map} $f$ is face essential}.
\end{proof}

\subsection{Gluing pre-complexes}

One advantage of working with pre-complexes is that they can be conveniently glued to create complexes.

\begin{definition}\label{def: Gluing}
Let $f:X\to Y$ be a branched map of pre-complexes. Suppose that $e_1,e_2$ are edges of $G_X$ such that $f(e_1)=f(e_2)$.  Since $f$ is a branched map, it induces injective maps
\[
f_*:w_X^{-1}(e_1)\to w_Y^{-1}(f(e_1))
\]
and
\[
f_*:w_X^{-1}(e_2)\to w_Y^{-1}(f(e_2))\,.
\]
The edges $e_1$ and $e_2$ of $X$ are said to be \emph{gluable (along $f$)} if
\begin{enumerate}[(i)]
\item $e_1\notin E^\iota_X$ and $e_2\notin E^\tau_X$, and
\item $f_*(w_X^{-1}(e_1))=f_*(w_X^{-1}(e_2))$.
\end{enumerate}
In this case, $f$ induces a well-defined bijection $w_X^{-1}(e_1)\to w_X^{-1}(e_2)$.

We can now define a new pre-complex $X'$ from $X$ by identifying $e_1$ and $e_2$ to a common edge $e$. Set $\iota(e)=\iota(e_2)$ (if it exists) and $\tau(e)=\tau(e_1)$ (if it exists). Each edge $w_X^{-1}(e_1)$  corresponds to a unique edge $\epsilon_2\in w_X^{-1}(e_2)$ under the above bijection, and in $X'$ these two edges are identified to a common edge $\epsilon\in w_{X'}^{-1}(e)$, with $\iota(\epsilon)=\iota(\epsilon_2)$ (if it exists) and $\tau(\epsilon)=\tau(\epsilon_1)$ (if it exists). Finally, note that $f$ descends to a well-defined branched map $f:X'\to Y$.

The resulting pre-complex $X'$ is said to be constructed from $X$ by \emph{gluing $e_1$ and $e_2$ (along $f$)}.
\end{definition}

Gluing can be used to reassemble a complex from the regular neighbourhoods of its vertices.

\begin{remark}\label{rem: Gluing regular neighbourhoods}
Let $X$ be a finite complex, and let $N$ be the disjoint union of the regular neighbourhoods of the vertices of $X$. Then $N$ is naturally equipped with a branched immersion $N\to X$, and $X$ can be reconstructed from $N$ by gluing the edges in pairs.
\end{remark}

\section{Irreducible complexes}\label{sec: Irreducible complexes}

The properties of non-positive immersions, negative immersions and uniform negative immersions have a common theme. Given an immersion of finite 2-complexes $Y\to X$, if the Euler characteristic $\chi(Y)$ is { not negative enough,}  then $Y$ can be simplified.  The exact sense in which $Y$ can be simplified is of crucial importance to the definition of uniform negative immersions. In the next subsection we define irreducible complexes, and with these definitions in hand we can give the correct definitions of non-positive, negative and uniformly negative immersions.

\subsection{Unfolding}

We are interested in ways that a finite 2-dimensional complex $X$ might be homotopic to a point. The next definition summarises various ways in which $X$ either is a point or can be simplified.

\begin{definition}[Visibly reducible and irreducible pre-complexes]\label{def: Visibly reducible and visibly irreducible}
A finite pre-complex $X$ is \emph{visibly reducible} if any one of the following four conditions hold:
\begin{enumerate}[(i)]
\item some component of $X$ is a point;
\item the 1-skeleton $G$ has a vertex of valence 1;
\item $X$ has a \emph{free face} --- that is, an edge $e$ of the 1-skeleton $G$ with $\#w^{-1}(e)=1$;
\item a vertex $v$ of $X$ separates its regular neighbourhood $N_X(v)$, so $N_X(v)\setminus\{v\}$ is disconnected.
\end{enumerate}
A fifth kind of reduction also plays an important role:
\begin{enumerate}[(i)]
\setcounter{enumi}{4}
\item  a vertex $v$ of $X$ has an incident edge $e$ such that the union $v\cup e$ separates the regular neighbourhood $N_X(v)$.
\end{enumerate}
If none of conditions (i)-(v) hold then $X$ is called \emph{visibly irreducible}.  If condition (v) holds but none of conditions (i)-(iv) hold, then $X$ is called \emph{unfold-able}\footnote{The hyphen is to resolve an ambiguity: we mean that such a pre-complex can be unfolded, not that it cannot be folded.}. { See Figure~\ref{fig: Torus folding} for a concrete example of an unfold-able 2-complex.}
\end{definition}

{If neither of conditions (i) or (ii) hold then the 1-skeleton of $X$ is a core graph. Recall that a graph is called \emph{core} if every vertex is in the image of an immersed cycle. Henceforth, we will always implicitly assume that the 1-skeleton of any 2-complex we consider is a core graph.}

{The conditions of Definition \ref{def: Visibly reducible and visibly irreducible}} can also be phrased naturally in terms of Whitehead graphs: see for instance \cite[Lemma 2.10]{wilton_essential_2018} for a similar list of conditions. Recall from \cite{stallings-folding} that a morphism of graphs $f:G\to H$ is a \emph{fold} if it identifies just a single pair of edges $e_1,e_2$, incident at a common vertex. 

\begin{definition}\label{def: Essential fold and essential equivalence}
A fold of graphs is called \emph{essential} if it is injective on fundamental groups.  A morphism of graphs $f:G\to H$ is called an \emph{essential equivalence} if it is a composition of essential folds, or equivalently, it is an isomorphism on fundamental groups. A face-essential morphism of complexes $f:Y\to X$ is called an \emph{essential equivalence} if the map of 1-skeleta $f:G_Y\to G_X$ is an essential equivalence of graphs.   In this case, we also say that $Y$ is an \emph{essential unfolding} of $X$.
\end{definition}

Note that, if $f:Y\to X$ is an essential equivalence of complexes, then the realisation $\real{f}$ is a homotopy equivalence.

The next lemma, which is a restatement of \cite[Lemma 2.8]{wilton_essential_2018} in the terminology of this paper, explains the relationship between Definition \ref{def: Visibly reducible and visibly irreducible}(v) and (un)folding.

\begin{lemma}\label{lem: Unfolding one step}
If a finite complex $X$ is unfold-able then there is a finite complex $X'$ and an essential fold $f:X'\to X$.
\end{lemma}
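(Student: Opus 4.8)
The statement to prove is Lemma~\ref{lem: Unfolding one step}: if a finite complex $X$ is unfold-able, then there is a finite complex $X'$ and an essential fold $f : X' \to X$. Since $X$ is unfold-able, by Definition~\ref{def: Visibly reducible and visibly irreducible} there is a vertex $v$ of $X$ and an incident edge $e$ such that $v \cup e$ separates the regular neighbourhood $N_X(v)$; moreover none of conditions (i)--(iv) hold, so in particular $N_X(v)\setminus\{v\}$ is connected while $N_X(v)\setminus(v\cup e)$ is disconnected. The plan is to perform the fold in reverse: split the vertex $v$ into two vertices $v_1, v_2$ joined by a new edge, distributing the incident half-edges (i.e.\ the edges of the asterisk $N_X(v)$) according to the separation, and correspondingly lift the faces through $v$. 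The resulting complex $X'$ will admit a fold $X' \to X$ that re-identifies the new edge pair, and one checks this fold is essential, i.e.\ injective on $\pi_1$.

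First I would set up the combinatorics of the unfolding at $v$. Write $N = N_X(v)$, an asterisk pre-complex with unique vertex $v$ and edge set $\iota^{-1}(v) \sqcup \tau^{-1}(v)$; the hypothesis gives a partition of the half-edges of $N$ into two nonempty blocks $A_1 \sqcup A_2$ so that every element of $w_N^{-1}(v)$ — each corner of a face at $v$ — joins a half-edge in $A_i$ to one in $A_i$ for a single $i$, except those corners using the edge $e$, which may cross between blocks. More precisely, cutting $N$ along $v\cup e$ disconnects it, so after removing the two half-edge-ends coming from $e$, the remaining Whitehead graph falls into two components whose vertex sets are (essentially) $A_1$ and $A_2$. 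I would then define $X'$ by replacing $v$ with two vertices $v_1,v_2$, adding a new edge $e'$ from $v_1$ to $v_2$, reattaching the half-edges in $A_i$ to $v_i$, and re-routing each face of $X$ that passes through $v$: a corner lying entirely in block $i$ stays at $v_i$, while a corner of a face that previously traversed $e$ and crossed from block $1$ to block $2$ now additionally traverses the new edge $e'$. This is exactly a gluing/ungluing operation in the sense of Definition~\ref{def: Gluing}, read backwards, and the separation hypothesis is precisely what guarantees the rerouting is consistent and that every face of $X'$ is still an immersed cycle, so $X'$ is a genuine complex.

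Next I would exhibit the fold $f : X' \to X$. On $1$-skeleta, $f$ sends $v_1, v_2 \mapsto v$ and collapses $e'$ to... no — more carefully, a fold identifies a pair of edges incident at a common vertex, so I should arrange that $e'$ and (a parallel copy of) $e$ in $X'$ share an endpoint and get identified; equivalently, $X'$ is built so that $X$ is recovered by folding the two edges $e, e'$ at $v_1$ (or $v_2$) together. The face data descends because the rerouting was designed to be compatible. That $\real f$ is a homotopy equivalence, equivalently that the fold is \emph{essential} (injective on $\pi_1$), follows from the standard criterion: a fold identifying edges $e_1, e_2$ at a common vertex fails to be essential exactly when $e_1, e_2$ have the \emph{same} other endpoint as well (so a loop is killed); here the two edges being folded have distinct other endpoints ($v_2$ versus the old other endpoint of $e$), because both blocks $A_1, A_2$ are nonempty and $N_X(v)\setminus\{v\}$ was connected, which prevents the degenerate configuration. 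I would also invoke the fact, recorded after Definition~\ref{def: Essential fold and essential equivalence}, that face-essentiality of $f$ needs checking: here $f_0 : X' \to \bar{X'}$ is a face-equivalence by construction since we only split a vertex and added an edge without merging faces.

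The main obstacle I anticipate is purely bookkeeping: verifying that the rerouted faces of $X'$ are still \emph{cycles} immersed in $G_{X'}$ (not merely closed walks), which is where one must use that the separation is along $v \cup e$ and not just along $v$ — a face corner at $v$ traversing $e$ gets the extra edge $e'$ inserted exactly once per crossing, and one must check parity/consistency so that the attaching map remains a closed immersion $S_{X'} \immerses G_{X'}$. The cleanest route is to phrase everything through Whitehead graphs and regular neighbourhoods (Remark~\ref{rem: Gluing regular neighbourhoods}): reconstruct $X$ by gluing regular neighbourhoods of vertices, replace $N_X(v)$ by the two-vertex asterisk pre-complex dictated by the partition, and re-glue; then $X'$ is automatically a complex and $f$ is automatically a morphism, and only the essentiality of the single fold remains, which is the short nondegeneracy argument above. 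Since this lemma is explicitly stated to be a restatement of \cite[Lemma 2.8]{wilton_essential_2018} in the present terminology, I would ultimately cite that proof for the details and include only the translation of hypotheses and conclusion into the pre-complex language of this paper.
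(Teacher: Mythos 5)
Your overall strategy --- split $v$ into $v_1,v_2$ according to the separation of $N_X(v)$ by $v\cup e$, reroute the faces, and recover $X$ by a single fold --- is the right one, and it is exactly the construction behind the cited \cite[Lemma 2.8]{wilton_essential_2018}; the paper itself gives no proof beyond that citation, so deferring to it at the end is legitimate. However, the step where you ``exhibit the fold'' is garbled in a way that would fail if carried out literally. In the correct construction there is no new edge $e'$ joining $v_1$ to $v_2$. Instead the edge $e$ itself is doubled: writing $u$ for the endpoint of $e$ away from $v$, the complex $X'$ has two edges $e_1$ (from $u$ to $v_1$) and $e_2$ (from $u$ to $v_2$), both mapping to $e$; the half-edges on the two sides of the separation attach to $v_1$ and $v_2$ respectively, and each traversal of $e$ by an attaching circle lifts to $e_1$ or $e_2$ according to which side the adjacent corner at $v$ lies on. The fold $X'\to X$ then identifies $e_1$ with $e_2$ at their \emph{common} endpoint $u$, thereby identifying $v_1$ with $v_2$. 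Your version --- folding ``$e$ and $e'$ at $v_1$'', where the two edges have other endpoints $v_2$ and $u$ --- would identify $v_2$ with $u$ rather than $v_1$ with $v_2$, and the quotient would not be $X$: the half-edges you reattached to $v_2$ would end up at $u$. Your first instinct, collapsing $e'$, is also unavailable, since edge collapses are not morphisms in this combinatorial category (morphisms send edges to edges).

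Two smaller points. The essentiality of the fold does not rest on the connectivity of $N_X(v)\setminus\{v\}$: the two edges $e_1,e_2$ being identified have distinct other endpoints $v_1\neq v_2$ by construction, and that is the entire criterion. (Failure of condition (iv) is what prevents $X$ from being visibly reducible; it plays no role in the essentiality of this fold.) Also, the case where $e$ is a loop at $v$, so $u=v$, needs a sentence deciding which of $v_1,v_2$ receives the $\tau$-ends of $e_1$ and $e_2$; the fold remains essential because the $\iota$-ends $v_1,v_2$ stay distinct. With the fold corrected as above (or by deferring entirely to \cite[Lemma 2.8]{wilton_essential_2018}, as the paper does), the argument is complete.
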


Unfolding preserves the {number of edges of the faces}  while increasing the number of vertices and edges of the 1-skeleton, so it is not hard to see that Lemma \ref{lem: Unfolding one step} can only be applied finitely many times before producing a complex which is either visibly reducible or visibly irreducible. The next lemma is a restatement of \cite[Lemma 2.11]{wilton_essential_2018} in the terminology of this paper.

\begin{lemma}\label{lem: Unfolding all the way}
For any finite complex $X$, there is a finite complex $X'$ and an essential equivalence $f:X'\to X$ such that $X'$ is either visibly reducible or visibly irreducible.
\end{lemma}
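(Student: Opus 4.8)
The plan is to iterate Lemma~\ref{lem: Unfolding one step} and to show that the process terminates after boundedly many steps. First I would record the trichotomy that is immediate from Definition~\ref{def: Visibly reducible and visibly irreducible}: every finite complex is exactly one of visibly reducible, unfold-able, or visibly irreducible. Thus if $X$ is already visibly reducible or visibly irreducible we are done, taking $X'=X$ and $f=\mathrm{id}$. Otherwise $X$ is unfold-able, so Lemma~\ref{lem: Unfolding one step} produces a finite complex together with an essential fold onto $X$; iterating, we obtain a sequence of essential folds $f_i\colon X_i\to X_{i-1}$ with $X_0=X$, continued so long as the current complex $X_i$ is unfold-able and stopped at the first $X_k$ that is not. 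By the trichotomy this $X_k$ is visibly reducible or visibly irreducible, and since each $f_i$ is an essential fold, the composite $X_k\to X$ is an essential equivalence of complexes (its $1$-skeleton map is a composition of essential folds, and folding it identifies no faces), as in Definition~\ref{def: Essential fold and essential equivalence}. It remains to bound $k$.

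For the termination bound I would use a crude edge count. Let $n:=\#E_{S_X}$ be the number of edges of the face pre-graph; unfolding preserves $S_X$ (as noted just before the statement), so $\#E_{S_{X_i}}=n$ for every $i$. The key claim is that any finite complex $Y$ that is \emph{not} visibly reducible satisfies $2\,\#E_{G_Y}\le n$. To see this, suppose some edge $e$ of $G_Y$ has $w_Y^{-1}(e)=\emptyset$ and let $v$ be an endpoint of $e$; then the half-edge of $e$ at $v$ is an isolated vertex of the Whitehead graph of $N_Y(v)$. If $v$ has valence at least $2$, this forces that Whitehead graph to be disconnected, which is Definition~\ref{def: Visibly reducible and visibly irreducible}(iv) in its Whitehead-graph formulation (cf.\ \cite[Lemma~2.10]{wilton_essential_2018} and the remark after Definition~\ref{def: Visibly reducible and visibly irreducible}); if $v$ has valence $1$, it is Definition~\ref{def: Visibly reducible and visibly irreducible}(ii). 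In either case $Y$ is visibly reducible, contrary to hypothesis. Hence every edge of $G_Y$ lies in the image of $w_Y$, and, since $Y$ has no free face (Definition~\ref{def: Visibly reducible and visibly irreducible}(iii)), each edge satisfies $\#w_Y^{-1}(e)\ge 2$; as the sets $w_Y^{-1}(e)$ partition $E_{S_Y}$ we get $2\,\#E_{G_Y}\le\sum_e \#w_Y^{-1}(e)=\#E_{S_Y}=n$, proving the claim.

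Finally, every $X_i$ with $i<k$ is unfold-able, hence not visibly reducible, so the claim gives $\#E_{G_{X_i}}\le n/2$. On the other hand an essential fold identifies a single pair of edges of the $1$-skeleton, so $\#E_{G_{X_i}}=\#E_{G_{X_{i-1}}}+1$ and hence $\#E_{G_{X_i}}=\#E_{G_X}+i\ge i$; combining, $i\le n/2$ whenever $i<k$, so $k\le n/2+1$ and the sequence terminates. Setting $X'=X_k$ gives a finite complex and an essential equivalence $X'\to X$ with $X'$ visibly reducible or visibly irreducible, as required.

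The only point that takes any care is the claim in the second paragraph, and within it the passage between the combinatorial content of Definition~\ref{def: Visibly reducible and visibly irreducible}(iv) and disconnectedness of the relevant Whitehead graph; everything else is bookkeeping. Indeed the statement is a restatement of \cite[Lemma~2.11]{wilton_essential_2018}, so one could alternatively simply appeal to that result.
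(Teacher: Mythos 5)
Your proposal is correct and follows essentially the same route as the paper: the paper states this lemma as a restatement of \cite[Lemma~2.11]{wilton_essential_2018} and, in the sentence preceding it, sketches exactly your termination argument (unfolding preserves $S_X$ while increasing the number of edges of the 1-skeleton, so only finitely many unfolds are possible). Your explicit bound $2\,\#E_{G_Y}\le \#E_{S_Y}$ for non-visibly-reducible complexes is a correct and welcome filling-in of the paper's ``it is not hard to see''.
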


We say that $X$ \emph{unfolds to} $X'$. To summarise, unfold-able complexes may be either reducible or irreducible, but their status can be determined by unfolding.

\begin{definition}[Reducibility and irreducibility]\label{def: Reducible and irreducible complexes}
If a finite 2-complex $X$ unfolds to a visibly reducible complex $X'$ {(with the 1-skeleton still a core graph)} then $X$ is said to be \emph{reducible}. Likewise, if $X$ unfolds to a visibly irreducible complex $X'$ then $X$ is said to be \emph{irreducible}.
\end{definition}

Note that an irreducible 2-complex is not required to be connected.

\begin{remark}\label{rem: Terminology in other papers} 
The terminology `irreducible' has been used in several papers related to this one, such as \cite{louder-wilton,louder-wilton2,louder_one-relator_2020,wilton_essential_2018}. Although the usage was always similar to its definition given here, it was not always identical, so the reader is advised to check the definition in the cited paper when a result is quoted.
\end{remark}

It is not immediately obvious that a complex $X$ cannot be both reducible and irreducible, since $X$ may admit many different unfoldings.  Indeed, a 2-complex may have several unfoldings, as the next example illustrates.

\begin{figure}[ht]
  \centerline{
    \includegraphics[width=\textwidth]{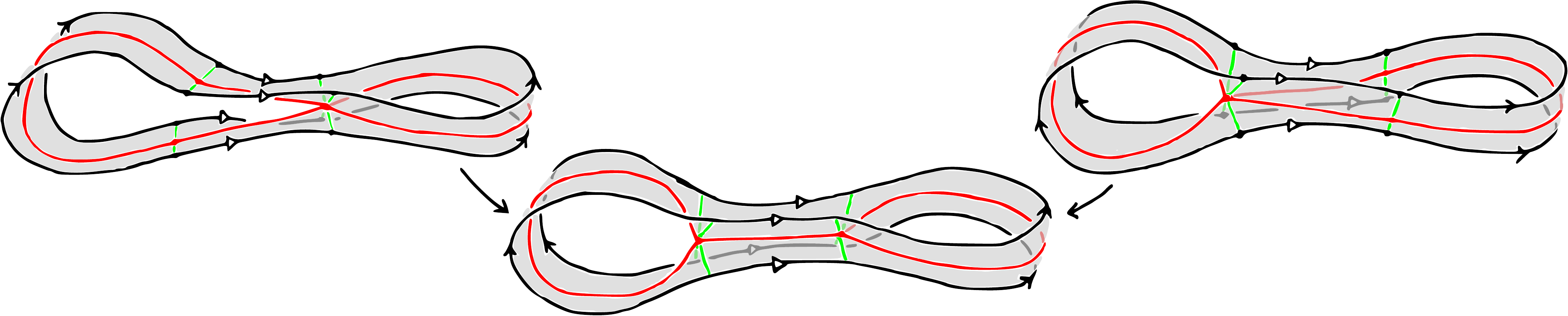}
  }
  \caption{The central diagram illustrates the complex $X$ from Example \ref{eg: Folding a torus}: the 1-skeleton is coloured red, and the single face is attached along the black curve. It admits two different unfoldings, each a realisation of a torus.}
  \label{fig: Torus folding}
\end{figure}

\begin{example}\label{eg: Folding a torus}
Let $X$ be a 2-complex defined as follows.  The 1-skeleton of $X$ is a `spectacles' or `bar-bell' graph, with two vertices $u_1$ and $u_2$, and edges $a_1,a_2,b$, where each $a_i$ is a loop starting and ending at $u_i$, and $b$ joins $u_1$ to $u_2$. The single face $C$ of $X$ is attached along the loop labelled by the commutator of $a_1$ and $ba_2b^{-1}$.  Now $X$ admits two different unfoldings, each with realisation homeomorphic to the torus: one is obtained by dividing $u_1$ into two vertices, the other is obtained by dividing $u_2$ into two vertices.
\end{example}

Many further examples of this type can be constructed by taking any cellular decomposition $X'$ of a surface and performing an essential fold to obtain a complex $X$; such an $X$ can always be unfolded in two ways.

Nevertheless, a famous lemma of Whitehead \cite{whitehead_equivalent_1936} implies that (ir)reducibility is equivalent to various group-theoretic properties of the conjugacy classes in the free group $F=\pi_1(G)$ represented by the components of $S$.  Combined with the material of this section, Whitehead's lemma (see, for instance, \cite[Lemma 2.10]{wilton_essential_2018} for a suitable statement), gives the following proposition. 

\begin{proposition}\label{prop: Whitehead}
Let $X$ be a finite connected complex. Let $F$ be the fundamental group of the 1-skeleton $G_X$, and let $\{w_1,\ldots,w_n\}$ be a finite set of representatives for the conjugacy classes of $F$ determined by the components of $S$. The complex $X$ is reducible if and only if one of the following three conditions hold:
\begin{enumerate}[(i)]
\item $F$ is trivial and $n=0$;
\item $F$ is infinite cyclic, {$n\leq 1$ and, if $n=1$,} $F=\langle w_1\rangle$;
\item there is a non-trivial free splitting $F=A*B$ such that, for each $i$, $w_i$ is conjugate into either $A$ or $B$.
\end{enumerate}
\end{proposition}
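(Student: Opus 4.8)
The plan is to use the unfolding machinery of this section to reduce to the case in which $X$ is itself visibly reducible or visibly irreducible, and then to extract the statement from Whitehead's lemma (cf.\ \cite[Lemma~2.10]{wilton_essential_2018}). The first observation is that each of conditions (i)--(iii) depends only on the data $(F,\{[w_1],\dots,[w_n]\},n)$ up to isomorphism: the free group, the multiset of conjugacy classes it carries, and the number of them. An essential equivalence $f\colon Y\to X$ of complexes induces a homotopy equivalence $\real f$, hence an isomorphism $\pi_1(G_Y)\xrightarrow{\sim}F$ taking one multiset of conjugacy classes onto the other; and since unfolding preserves faces, $f$ restricts to a bijection $S_Y\cong S_X$, so $n$ is unchanged. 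Therefore ``some one of (i)--(iii) holds'' is constant along essential equivalences. By Lemma~\ref{lem: Unfolding all the way}, $X$ unfolds to a complex $X'$ that is visibly reducible or visibly irreducible, and by Definition~\ref{def: Reducible and irreducible complexes} the complex $X$ is reducible precisely when it admits \emph{some} visibly reducible unfolding. So it suffices to prove, for every finite connected complex: \emph{(A)} if it is visibly reducible then one of (i)--(iii) holds; and \emph{(B)} if it is visibly irreducible then none of (i)--(iii) holds. Granting these, the forward direction follows by applying (A) to a visibly reducible unfolding of $X$ and transporting back; for the reverse direction, if $X$ had no visibly reducible unfolding then the complex $X'$ furnished by Lemma~\ref{lem: Unfolding all the way} would be visibly irreducible, hence satisfy none of (i)--(iii) by (B), contradicting the invariance just established, so $X$ is in fact reducible. (One uses here that a visibly irreducible complex automatically has a core $1$-skeleton, since clauses (i) and (ii) of Definition~\ref{def: Visibly reducible and visibly irreducible} exclude isolated and valence-one vertices.)

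The proofs of (A) and (B) proceed by translating the clauses of Definition~\ref{def: Visibly reducible and visibly irreducible} into properties of the Whitehead graphs $\Wh(v)$ of Definition~\ref{def: Whitehead graph}, and this is precisely the content of Whitehead's lemma. For (A): a point is case (i). A valence-one vertex (clause (ii)) meets no face, faces being immersed, so deleting it with its edge is an essential equivalence onto a strictly smaller complex and one induces on the number of edges. Given a free face $e$ (clause (iii)), it cannot be a non-loop bridge, since a closed immersed loop cannot traverse a bridge exactly once; if $e$ is a loop, writing $F=F_0*\langle t\rangle$ with $t=[e]$, the unique face through $e$ represents $tu$ for some $u\in F_0$ while the remaining relators lie in $F_0$, so re-basing by $t'=tu$ puts $w_1\in\langle t'\rangle$ and the rest in $F_0$, giving case (iii) (or case (ii) when $F_0$ is trivial); if $e$ is a non-loop non-bridge, collapsing it is a homotopy equivalence of $G_X$ preserving the relators, and one again induces on the number of edges. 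If a vertex $v$ separates its regular neighbourhood (clause (iv)) then $\Wh(v)$ is disconnected, and the classical argument splitting the graph at $v$ produces --- after, if necessary, a change of basis realised by a Whitehead automorphism --- a nontrivial free decomposition $F=A*B$ into which every $w_i$ is conjugate, which is case (iii), or a degenerate case falling under (i) or (ii). For (B): a visibly irreducible complex is not a point, has core $1$-skeleton, has no free face, and has every $\Wh(v)$ connected with no cut vertex of the kind appearing in clause (v); by Whitehead's lemma this rules out any free splitting respected by all the $w_i$, hence (iii), while (i) and (ii) are incompatible with $G_X$ being a core graph without free faces.

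The main obstacle is the Whitehead-graph analysis behind (A) and (B) --- in particular the classical fact, used for clause (iv) and its converse in (B), that a disconnected Whitehead graph yields (possibly only after a suitable change of basis, supplied by a Whitehead automorphism) a free splitting respected by every $w_i$, and conversely that connected Whitehead graphs without cut vertices preclude such splittings. I would not reprove this, but quote it in the form of \cite[Lemma~2.10]{wilton_essential_2018}; the remaining care is in the degenerate cases, where $F$ is trivial or infinite cyclic and the conclusion is checked directly, and in the termination of the inductions in clauses (ii) and (iii) of (A), where each elementary collapse strictly decreases the number of edges while Lemma~\ref{lem: Unfolding all the way} bounds the number of unfolding steps.
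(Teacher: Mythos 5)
Your overall route is the one the paper intends: the paper offers no written proof of this proposition beyond the sentence citing Whitehead's lemma (\cite[Lemma 2.10]{wilton_essential_2018}) together with the unfolding machinery, and your assembly --- invariance of (i)--(iii) under essential equivalences, Lemma \ref{lem: Unfolding all the way}, and the dichotomy (A)/(B) --- is the right way to combine those ingredients, with the Whitehead-graph analysis correctly identified as the imported black box.

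There is, however, a structural flaw in (A) as you state and prove it. The claim ``every visibly reducible finite connected complex satisfies one of (i)--(iii)'' is false: attach a hanging edge to any visibly irreducible complex with core 1-skeleton; the result is visibly reducible by clause (ii) of Definition \ref{def: Visibly reducible and visibly irreducible} but carries the same data $(F,\{w_i\})$, which by your own (B) satisfies none of (i)--(iii). The same defect sits inside your inductions for clauses (ii) and (iii): deleting a valence-one vertex, or contracting a non-loop free edge, yields a smaller complex that need not be visibly reducible, so the inductive hypothesis cannot be invoked (and edge contraction can additionally destroy the immersion property of the attaching maps by creating backtracks). The repair is to prove (A) only for visibly reducible complexes with core 1-skeleton --- checking that the unfoldings produced by Lemma \ref{lem: Unfolding all the way} retain this property, or else passing to the core first and verifying that this changes neither the data nor the reducibility status --- and to dispense with induction in the free-face case altogether: since a free edge is never a bridge, it lies outside some maximal tree and is therefore a basis element $t$ with $F=F_0*\langle t\rangle$; the unique face through it reads $t$ exactly once, and your rebasing $t'=tu$ then gives (iii) (or (ii)) directly, whether or not $e$ is a loop. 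Finally, note one degenerate case that your (A) (and, strictly, the proposition itself) misses: a core graph with $F\cong\ZZ$ and $n=0$ is visibly reducible by clause (iv), since its Whitehead graphs have no edges, yet satisfies none of (i)--(iii); condition (ii) ought to allow $n=0$ as well.
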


Since the conditions of Proposition \ref{prop: Whitehead} are group-theoretic and are not altered by homotopy-equivalences of the 1-skeleta (once we have ensured that they are core graphs), it follows that every finite complex is either reducible or irreducible.

\subsection{Irreducible cores}

Since this notion of irreducibility is slightly stronger than those used in \cite{louder-wilton,louder-wilton2}, we will need some lemmas that enable us to improve unfold-able complexes to irreducible ones, under suitable hypotheses on the fundamental group.

\begin{definition}\label{def: Freely decomposable}
A group $\Gamma$ is \emph{freely decomposable} if $\Gamma$ splits as a non-trivial free product $\Gamma=A*B$. Otherwise $\Gamma$ is said to be \emph{freely indecomposable}.
\end{definition}

A 2-complex $X$ can be replaced by a visibly irreducible one, as long as the fundamental group is complicated enough.

\begin{lemma}\label{lem: Irreducible core}
Let $X$ be a finite, connected 2-complex, and suppose that $\pi_1(X)$ is neither freely decomposable nor free. Then there is a visibly irreducible 2-complex $X'$ and an essential face-embedding $X'\to X$, which is also an isomorphism on fundamental groups.
\end{lemma}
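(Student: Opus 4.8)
The plan is to start with an arbitrary finite connected $2$-complex representing $X$ and successively remove the local obstructions in Definition \ref{def: Visibly reducible and visibly irreducible}, checking at each step that the modification is an isomorphism on $\pi_1$ and that it terminates. First I would reduce to the case where the $1$-skeleton $G_X$ is a \emph{core graph}: collapsing a maximal forest in the subgraph of valence-$\leq 1$ vertices (equivalently, repeatedly removing vertices of valence $1$ together with their unique incident edge, which is necessarily a free face or a valence-$1$ vertex situation) is a homotopy equivalence of the realisation, so it does not change $\pi_1(X)$; this kills obstructions (i) and (ii) of Definition \ref{def: Visibly reducible and visibly irreducible}, and since $\pi_1(X)$ is nontrivial we never collapse to a point. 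The hypothesis that $\pi_1(X)$ is not free will be used here to guarantee there is at least one face, so that $S_X\neq\emptyset$.

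Next I would deal with free faces, obstruction (iii): if some edge $e$ of $G_X$ has $\#w^{-1}(e)=1$, then the realisation $\real X$ deformation retracts across the corresponding $2$-cell (a standard elementary collapse), again an isomorphism on $\pi_1$; after such a collapse I re-run the core-graph reduction. Each collapse strictly decreases the number of faces, so this process terminates, and since $\pi_1(X)$ is not free we cannot collapse all the faces away — in particular the terminal complex still has a face, hence a nonempty core $1$-skeleton. At this point obstructions (i)--(iii) are all absent. Now I would invoke the unfolding machinery of Section \ref{sec: Irreducible complexes}: by Lemma \ref{lem: Unfolding all the way}, $X$ unfolds via an essential equivalence $X''\to X$ — a homotopy equivalence, hence an isomorphism on $\pi_1$ — to a complex $X''$ that is either visibly reducible or visibly irreducible, and unfolding preserves $S_X$ so it does not resurrect free faces, nor create valence-$1$ vertices or point components. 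So the only remaining issue is whether $X''$ might be visibly reducible \emph{via obstruction (iv)}: a vertex $v$ whose deleted regular neighbourhood $N_X(v)\setminus\{v\}$ is disconnected.

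The main obstacle, and the only place the freely-indecomposable hypothesis really bites, is ruling out obstruction (iv) at the end. If $v$ is a separating vertex of its own regular neighbourhood, then $G_X$ (being a core graph) decomposes as a wedge at $v$ of two subgraphs, each nontrivial, so $\pi_1(G_X)=F=A*B$ nontrivially; and every face, being attached by an immersed loop, has image in one of the two sides after a suitable homotopy — more precisely each conjugacy class $w_i$ of Proposition \ref{prop: Whitehead} is conjugate into $A$ or into $B$. But then Proposition \ref{prop: Whitehead}(iii) says $X$ is \emph{reducible}, and since $\pi_1(X)$ is a quotient of $F/\ncl{w_1,\ldots,w_n}$ that inherits the free splitting (Grushko/Kurosh: a quotient of $A*B$ by the normal closure of elements each conjugate into a factor splits as $(A/\ncl{\cdot})*(B/\ncl{\cdot})$), $\pi_1(X)$ would be freely decomposable or free, contradicting the hypothesis. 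Hence at the end $X''$ is visibly irreducible, and the composite $X''\to X$ is the desired $\pi_1$-isomorphism. I would organise the write-up so that the termination of the collapse/unfold loop is packaged cleanly (faces never increase, and each genuine collapse strictly decreases them; unfolding terminates by Lemma \ref{lem: Unfolding all the way}), and so that the final appeal to Proposition \ref{prop: Whitehead} is the one substantive step rather than an afterthought.
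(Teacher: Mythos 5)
Your opening moves (pass to a core $1$-skeleton, collapse free faces, then unfold via Lemma \ref{lem: Unfolding all the way}) match the paper's strategy, but the final step --- ruling out obstruction (iv) of Definition \ref{def: Visibly reducible and visibly irreducible} by contradiction --- is wrong, and that is exactly where the content of the lemma lies. Granting that the locally separating vertex $v$ is genuinely separating (this itself needs the short argument that otherwise cutting at $v$ would give $\pi_1(X')\cong\pi_1(X'')*\ZZ$, contradicting free indecomposability --- ``being a core graph'' is not a justification), you get $X'=X_1\vee X_2$ and $\pi_1(X')\cong \bar{A}*\bar{B}$, where $\bar{A},\bar{B}$ are the quotients of the two free factors by the relators conjugate into them. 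Your contradiction only bites if both $\bar{A}$ and $\bar{B}$ are nontrivial. But one factor can be trivial while its wedge summand still carries faces: the paper's example of the presentation complex of $\langle a,b\mid baba^{-2},abab^{-2}\rangle$ is simply connected and visibly irreducible, and wedging it onto any complex preserves $\pi_1$, creates no free face, keeps the $1$-skeleton core, yet produces a locally separating vertex. So ``freely decomposable or free'' does not exhaust the possibilities, the contradiction evaporates, and your terminal complex need not be visibly irreducible. (Note also that the lemma does not claim $X$ itself is irreducible --- it may well be reducible in the sense of Proposition \ref{prop: Whitehead}; the task is to extract a visibly irreducible complex carrying the same $\pi_1$.)

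The paper's proof handles precisely this case: since every free splitting of $\pi_1(X')$ is trivial, one summand, say $X_2$, is simply connected; since the $1$-skeleton is core, $X_2$ is not a tree and therefore contains at least one face; hence $X_1$ has strictly fewer faces than $X$ and one concludes by induction on the number of faces (the same induction absorbs the free-face collapses, so termination comes for free). To repair your write-up, replace the contradiction with this ``discard the simply connected summand and recurse'' step and restructure the whole argument as an induction on the number of faces.
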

\begin{proof}
The proof proceeds by induction on the number of faces of $X$. By Lemma \ref{lem: Unfolding all the way}, there is an essential equivalence $X'\to X$ such that $X'$ is either visibly reducible or visibly irreducible. If $X'$ is visibly irreducible the result is immediate; otherwise, $X'$ is visibly reducible. Since $\pi_1(X)\cong\pi_1(X')$ is non-trivial, $X'$ is not a point, and we may assume that the 1-skeleton of $X'$ is a core graph. 

If $X'$ has a free face, collapsing it exhibits a sub-complex $X''\subseteq X'$ which is a deformation retract of $X'$ with fewer faces. Note also that the inclusion of $X''$ into $X'$ is an essential face-embedding. Therefore, the result holds for $X''$, and hence for $X'$,  by the inductive hypothesis.

Finally, suppose that $X'$ has a locally separating vertex $v$. Since $\pi_1(X')$ is freely indecomposable and not infinite cyclic, $v$ must be separating, and cutting $X'$ along $v$ realises $X'$ as a wedge
\[
X'=X_1\vee X_2\,.
\]
{Note that the inclusion maps $X_i\into X'$ are essential.} Since every free splitting of $\pi_1(X')$ is trivial, $X_2$ is simply connected, without loss of generality. Since the 1-skeleton of $X'$ is a core graph, $X_2$ is not a tree, and must therefore include at least one face. Hence $X_1$ has fewer faces than $X$, and so the lemma holds for $X_1$ by the inductive hypothesis. Since $X_1\to X'$ is an isomorphism on fundamental groups, the result also holds for $X$, which completes the proof.
\end{proof}

As in Example \ref{eg: Folding a torus}, the complex $X'$ furnished by Lemma \ref{lem: Irreducible core} may not be unique. However, we can make it unique by folding.

\begin{definition}\label{def: Irreducible core}
Let $X$ be finite, connected 2-complex with $\pi_1(X)$ neither freely decomposable nor free.  Let $f:X'\to X$ be provided by Lemma \ref{lem: Irreducible core}, and let $\widehat{X}\to X$ be the immersion provided by folding $f$. Then $\widehat{X}$ is called an \emph{irreducible core} for $X$.
\end{definition}

\begin{remark}\label{rem: Remarks about irreducible cores}
{Since $X'\to X$ is essential,} $X'$ is an essential unfolding of $\widehat{X}$. In particular, since $X'$ is visibly irreducible, it follows that $\widehat{X}$ is irreducible.  Also, since $X'\to X$ is a $\pi_1$-isomorphism and factors through the $\pi_1$-surjection $X'\to\widehat{X}$, the immersion $\widehat{X}\to X$ is a $\pi_1$-isomorphism.
\end{remark}

We can also characterise irreducible cores group-theoretically.  Let $X=(G_X,S_X,w_X)$ be a compact, connected 2-complex. The fundamental group of the 1-skeleton $G_X$ is a finitely generated free group $F$. The images of the circles $S_X$ under the attaching map $w_X$ define a collection of conjugacy classes of cyclic subgroups $\{\langle w_1\rangle,\ldots, \langle w_n\rangle \}$ of $F$. Grushko's theorem implies the existence of a maximal splitting
\[
F= F_1*\ldots* F_k\,,
\]
such that each $w_i$ is conjugate into some $F_j$. (If we combine the $F_j$ that do not contain any of the $w_i$ into a common free factor $H$, then this gives the \emph{relative Grushko decomposition} of the pair $(F,\{\langle w_i\rangle\})$.) If $\Gamma_j$ is the quotient of $F_j$ by the conjugates of the $w_i$ contained in $F_j$, then this gives a natural free-product decomposition
\[
\Gamma=\Gamma_1 *\ldots * \Gamma_k
\]
of $\Gamma=\pi_1(X)$. If $\Gamma$ is neither freely decomposable nor free, then exactly one of these factors -- without loss of generality, $\Gamma_1$ -- must be non-trivial. Now, $\widehat{X}$ can be defined as follows: the 1-skeleton is the Stallings core of the subgroup $F_1$ of $F=\pi_1(G_X)$, and the faces correspond to those $w_i$ that are conjugate into $F_1$.

We shall see that irreducible cores enjoy a universal property. Before we prove that, we need to notice that unfolds can be pulled back along branched immersions. 

\begin{lemma}\label{lem: Unfoldings pull back along branched immersions}
Let $Y\to X$ be a branched immersion, and suppose that $X'\to X$ is an essential fold. Then there is an essential unfolding $Y'\to Y$ such that the composition $Y'\to X$ factors through $X'\to X$.
\end{lemma}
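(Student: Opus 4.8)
The plan is to realise the essential fold $q\colon X'\to X$ as a single elementary unfolding in the sense of Lemma~\ref{lem: Unfolding one step} and then to perform the corresponding unfoldings on $Y$, using the branched immersion $p\colon Y\to X$ to decide where. First I would make $q$ explicit. After possibly reversing an orientation, an essential fold is determined by an edge $e$ of $X$ such that, writing $b=\tau(e)$, the union $b\cup e$ separates $N_X(b)$ into two non-empty pieces $N_X(b)\setminus(b\cup e)=P_1\sqcup P_2$; equivalently, $X$ is unfold-able at $(b,e)$ in the sense of Definition~\ref{def: Visibly reducible and visibly irreducible}(v). Then $X'$ is obtained from $X$ by splitting $b$ into vertices $b_1,b_2$, with $b_i$ carrying $P_i$ together with a fresh copy of the terminal end of $e$, and duplicating $e$ into edges $e_1,e_2$ with $\iota(e_i)=\iota(e)$ and $\tau(e_i)=b_i$; the faces lift precisely because no corner at $b$ joins $P_1$ to $P_2$. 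All other cells of $X$ lift uniquely and $q$ restricts to an isomorphism $S_{X'}\cong S_X$. (If instead $\iota(e)=\tau(e)$ one splits the single vertex in the evident way.)

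Now I would build $Y'$ by mimicking this move wherever it is forced. For each edge $\tilde e$ of $Y$ with $p(\tilde e)=e$, put $\tilde v=\tau(\tilde e)$, so that $p(\tilde v)=b$. Since $p$ is a branched immersion, the induced morphism $N_Y(\tilde v)\to N_X(b)$ (Remark~\ref{rem: Induced morphisms}) is injective, so $N_Y(\tilde v)\setminus(\tilde v\cup\tilde e)$ is partitioned by $P_1$ and $P_2$ and contains at most one half-edge lying over the terminal end of $e$. If both parts are non-empty, then $Y$ is unfold-able at $(\tilde v,\tilde e)$, and I apply Lemma~\ref{lem: Unfolding one step}: split $\tilde v$ into $\tilde v_1,\tilde v_2$ along this partition and double $\tilde e$ into $\tilde e_1,\tilde e_2$ with $\tau(\tilde e_i)=\tilde v_i$; if one part is empty, leave $\tilde v$ and $\tilde e$ alone. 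Distinct edges of $Y$ over $e$ have distinct terminal vertices (again by injectivity of $p$ on regular neighbourhoods), so these moves are pairwise independent and their composite is an essential unfolding $Y'\to Y$. In $Y'$ every vertex over $b$ now has regular neighbourhood immersing into either $N_{X'}(b_1)$ or $N_{X'}(b_2)$, every edge over $e$ is a copy of $e_1$ or of $e_2$, and every other cell has a unique lift; assembling these local choices by gluing (Remark~\ref{rem: Gluing regular neighbourhoods}) defines a morphism $Y'\to X'$, and unwinding the definitions shows that $q\circ(Y'\to X')$ equals the composition $Y'\to Y\to X$.

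The step I expect to be the real obstacle is the treatment of a vertex $\tilde v$ of $Y$ over $b$ that carries \emph{no} edge over $e$: there is nothing to double, so splitting $\tilde v$ would alter $\pi_1(Y)$, yet a morphism to $X'$ requires $N_Y(\tilde v)$ to immerse into a single $N_{X'}(b_i)$, which fails if the image of $N_Y(\tilde v)$ meets both $P_1$ and $P_2$. One therefore has to prove that $N_Y(\tilde v)\setminus\tilde v$ lies entirely in one $P_i$; since $b\cup e$ separates $N_X(b)$ and $\tilde v$ has no half-edge over $e$, this reduces to ruling out that $\tilde v$ is a locally separating vertex of $Y$ (Definition~\ref{def: Visibly reducible and visibly irreducible}(iv)) whose two local sides are carried into opposite $P_i$'s — and this is exactly the point at which the standing hypotheses on $Y$ in the intended application (for instance irreducibility, which forbids locally separating vertices) must be invoked. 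Granting this, the assignment of each such $\tilde v$ to $b_1$ or $b_2$ is forced and consistent across incident faces, the construction of the previous paragraph goes through unchanged, and $Y'\to Y$ is an essential equivalence since each vertex split was accompanied by a doubled edge, so that each elementary step is an essential fold.
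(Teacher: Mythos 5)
Your construction is the same as the paper's: identify the unfolding data $(v,e,N_1,N_2)$ in $X$, use injectivity of the branched immersion on regular neighbourhoods to see that each vertex of $Y$ over $v$ carrying an edge over $e$ either maps into one side or is itself separated by that edge, and unfold $Y$ at each separating pair; your observation that distinct edges over $e$ have distinct terminal vertices, so the elementary unfolds are independent, is also how the paper assembles $Y'$. The ``obstacle'' you flag in your last paragraph is genuine, and you have in fact spotted a lacuna in the paper's own argument: the paper asserts the dichotomy ``either $N_Y(v_i)$ maps into one of $N_1$ or $N_2$, or $N_Y(v_i)$ is separated by $e_i\cup v_i$'' as if it followed from the embedding $N_Y(v_i)\hookrightarrow N_X(v)$, but this silently omits exactly your case of a vertex $v_i$ over $v$ with no incident edge over $e$ whose neighbourhood meets both $N_1\setminus(e\cup v)$ and $N_2\setminus(e\cup v)$. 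For such a vertex no essential unfolding can work: any $Y'\to Y$ admitting a lift to $X'$ must split $v_i$, and a splitting realised by (reverses of) folds must double some edge of $Y$ at $v_i$ whose two copies map to $e_1$ and $e_2$ respectively, hence an edge over $e$ --- which $v_i$ does not have. So the lemma as stated tacitly assumes that no vertex of $Y$ over $v$ locally separates without an edge over $e$; as you say, this holds whenever $Y$ is irreducible (a locally separating vertex makes $Y$ visibly reducible, by Definition \ref{def: Visibly reducible and visibly irreducible}(iv)), which covers the lemma's only use in the proof of Lemma \ref{lem: Universal property of irreducible cores}. Your write-up, with that caveat made explicit, is if anything more careful than the original.
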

\begin{proof}
Such an unfolding of $X$ means that there is an edge $e$ with endpoints $u$ and $v$ such that $e$ and $v$ together separate the regular neighbourhood $N_X(v)$, so it can be written as a union
\[
N_X(v)= N_1\cup_{e\cup v} N_2\,.
\]
Let $v_1,\ldots, v_n$ be the pre-images of $v$ in $Y$ and let $e_i$ be the pre-images of $e$ incident at $v_i$, if it exists. For each $i$, $N_Y(v_i)$ embeds in $N_X(v)$, and so either  $N_Y(v_i)$ maps into one of $N_1$ or $N_2$, or $N_Y(v_i)$ is separated by $e_i\cup v_i$.  The unfolding $Y'$ is now constructed by unfolding each of these separating $e_i$ {in turn}.
\end{proof} 

Using this, we can now prove the claimed universal property for irreducible cores.

\begin{lemma}\label{lem: Universal property of irreducible cores}
Let $Y\to X$ be a branched immersion of finite, connected 2-complexes that is non-trivial on fundamental groups. Suppose that $Y$ is irreducible and that $\pi_1(X)$ is neither freely decomposable nor free, and let $\widehat{X}\to X$ be an irreducible core of $X$. Then the morphism $Y\to X$ factors through the map $\widehat{X}\to X$.
\end{lemma}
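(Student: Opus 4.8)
The plan is to induct on the number of $2$-cells of $X$, unwinding the construction of the irreducible core. Recall from Definition~\ref{def: Irreducible core} that $\widehat{X}$ is obtained by first applying Lemma~\ref{lem: Irreducible core} to get a visibly irreducible complex $X'$ with a $\pi_1$-isomorphism $X'\to X$, and then folding; and that the proof of Lemma~\ref{lem: Irreducible core} builds $X'$ from $X$ by a sequence of three kinds of move --- unfolding (Lemma~\ref{lem: Unfolding all the way}), collapsing a free face, and cutting at a separating vertex so as to discard a simply-connected wedge summand (which necessarily contains a face). We may assume $G_X$ is a core graph, since collapsing hairs changes nothing. For each move $X\rightsquigarrow X^{\ast}$ the plan is to lift the branched immersion $Y\to X$ to a branched immersion $Y\to X^{\ast}$ satisfying the same hypotheses, and then invoke the inductive hypothesis; the two face-reducing moves strictly decrease the number of $2$-cells, so the induction terminates.

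The face-reducing moves are controlled by irreducibility of $Y$. Having a free face or a free edge is inherited by every unfolding (unfolding alters only the $1$-skeleton, not the attaching map), so, since some unfolding of $Y$ is visibly irreducible, $Y$ has neither. Hence, if $X$ has a free face $e$, no edge of $Y$ maps to $e$ (such an edge would be a free face or a free edge of $Y$), so the image of $Y$ misses $e$ and the $2$-cell it bounds, and $Y\to X$ factors through the collapse. For a cut $X=X_1\vee_{v}X_2$ with $X_2$ simply connected, let $Y_2\subseteq Y$ be the union of the closures of the cells of $Y$ whose image lies in $X_2$ but is not the single vertex $v$. If $Y_2\neq\emptyset$ then, as $Y$ has no free edge, $Y_2$ carries a $2$-cell, and the decomposition $Y=Y_1\cup Y_2$ along the finite set of preimages of $v$ furnishes a free splitting of $\pi_1(G_Y)$, with a nontrivial free factor for each component of $Y_2$ bearing a $2$-cell, into which every $2$-cell of $Y$ is conjugate; by Proposition~\ref{prop: Whitehead} either $Y$ is reducible, contradicting the hypothesis, or $\pi_1(Y)$ is carried by $Y_2$ and so $\pi_1(Y)\to\pi_1(X)$ factors through $\pi_1(X_2)=1$, again a contradiction. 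Thus $Y_2=\emptyset$, the image of $Y$ misses $X_2\setminus\{v\}$, and $Y\to X$ factors through $X_1\hookrightarrow X$. In both cases the factored map is again a branched immersion, $Y$ is unchanged (hence still irreducible), and $\pi_1(Y)\to\pi_1(X^{\ast})\cong\pi_1(X)$ is still nontrivial.

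It remains to deal with unfolding. If $X$ is visibly irreducible then $\widehat{X}=X$ and there is nothing to do; otherwise, unfold $X$ fully to $X_0$ by Lemma~\ref{lem: Unfolding all the way}, and iterate Lemma~\ref{lem: Unfoldings pull back along branched immersions} over the essential folds making up $X_0\to X$ to obtain an essential unfolding $Y_0\to Y$ together with a branched immersion $Y_0\to X_0$ lifting $Y\to X$; here $Y_0$ is again irreducible, being an essential unfolding of $Y$ (Proposition~\ref{prop: Whitehead}), and $\pi_1(Y_0)\to\pi_1(X_0)$ is still nontrivial. If $X_0$ is visibly irreducible then again $\widehat{X}=X$; otherwise a face-reducing move applies, and running the induction produces $\widehat{Y}\to X'\to\widehat{X}$ for some essential unfolding $\widehat{Y}$ of $Y$. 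Folding the morphism $\widehat{Y}\to\widehat{X}$ then yields the factorization: each pair of edges identified by $\widehat{Y}\to Y$ comprises the two preimages in $\widehat{Y}$ of a single edge of $Y$, and these map to the two preimages in $X_0$ of a single edge of $X$ --- a pair that is also identified in the fold $X'\to\widehat{X}$ --- so the folded representative of $\widehat{Y}\to\widehat{X}$ is exactly $Y$, and we obtain a branched immersion $Y\to\widehat{X}$ through which $Y\to X$ factors.

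The hard part, and the main obstacle, is the compatibility asserted in the last sentence: one must check, fold by fold, that the unfolding imposed on $Y$ by pulling back the unfolding of $X$ is ``undone'' precisely by the fold $X'\to\widehat{X}$, so that folding $\widehat{Y}\to\widehat{X}$ recovers $Y$ and not some proper further unfolding of it. This requires tracking how the edge $e$ along which $X$ is unfolded splits in $X_0$, how its preimages in $Y$ split in tandem, and that the intervening collapse and cut moves do not delete one copy while retaining the other; getting this bookkeeping right is where the real work lies. A subsidiary delicate point, already visible above, is the separating-vertex step, which genuinely uses both the non-triviality of $\pi_1(Y)\to\pi_1(X)$ and the classification of reducibility in Proposition~\ref{prop: Whitehead}.
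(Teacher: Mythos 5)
Your inductive lifting of $Y$ through the construction of $X'$ is essentially the paper's argument: unfoldings are pulled back via Lemma~\ref{lem: Unfoldings pull back along branched immersions}, free faces are avoided because an irreducible complex admits none, and the simply-connected wedge summand is avoided using non-triviality on fundamental groups. (Your treatment of the separating-vertex step, via a free splitting of $\pi_1(G_Y)$ and Proposition~\ref{prop: Whitehead}, is more roundabout than the paper's --- which simply notes that no vertex of the visibly irreducible unfolding $Y'$ is locally separating, so each $N_{Y'}(v_i)$ maps into one side of the wedge and connectedness forces the entire image into $X_1$ or $X_2$ --- but it is workable.)

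The genuine gap is exactly where you flag it: descending from $\widehat{Y}\to\widehat{X}$ (in your notation, with $\widehat{Y}$ an unfolding of $Y$) to the required map $Y\to\widehat{X}$. The fold-by-fold bookkeeping you propose is both unverified and genuinely delicate --- the intervening collapse and cut moves can delete one preimage of an edge of $X$ while retaining the other, so the folds performed in $X'\to\widehat{X}$ need not match the unfolds of $X_0\to X$ edge for edge --- and it is also unnecessary. The paper replaces it with a soft argument. Since $Y\to X$ and $\widehat{X}\to X$ are branched immersions, the fiber product $Y\times_X\widehat{X}$ is defined, and the two maps $\widehat{Y}\to Y$ and $\widehat{Y}\to X'\to\widehat{X}$ agree over $X$, hence induce a canonical map $\widehat{Y}\to Y\times_X\widehat{X}$. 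Fold this map. The resulting complex maps to $Y$ by a branched immersion (the projection $Y\times_X\widehat{X}\to Y$ is a pullback of the branched immersion $\widehat{X}\to X$), while $\widehat{Y}$ maps onto it by a composition of folds; so it is a folded representative of $\widehat{Y}\to Y$. But $Y$ itself is also a folded representative of that map, since $\widehat{Y}\to Y$ is an essential unfolding. Uniqueness of folded representatives then supplies a canonical morphism from $Y$ to the folded complex, and composing with the projection to $\widehat{X}$ gives the desired factorisation of $Y\to X$ through $\widehat{X}$. You should replace your final paragraph with an argument of this kind; as it stands, the proof is incomplete at its crucial last step.
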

\begin{proof}
Recall that the irreducible core $\widehat{X}$ is constructed by folding an inductively constructed morphism $X'\to X$.  The lemma is proved by arguing that there is an essential unfolding $Y'\to Y$ that lifts to $X'$. 

Consider the inductive construction of $X'$ from Lemma \ref{lem: Irreducible core}, which is a finite sequence of unfoldings and inclusions of sub-complexes.  Lemma \ref{lem: Unfoldings pull back along branched immersions} handles the case of unfoldings. It remains to deal with the two cases of the argument that pass to subcomplexes of an unfolding $X'$ of $X$. 

In the first case, $X'$ has a face $C$ that is a free face, and $X''\subseteq X'$ is the subcomplex obtained by collapsing that free face.  Since $Y'\to X'$ is a branched immersion, if any face $D$ of $Y'$ maps to $C$ then $D$ is also a free face, so $Y'$ is visibly {reducible}, contradicting the hypothesis that $Y$ is irreducible. Therefore $C$ is not in the image of $Y'$, and so the map $Y'\to X'$ restricts to $X''$.

In the second case, $X'$ has a locally separating vertex and splits as a wedge $X_1\vee X_2$ with $X_2$ simply connected. Since $Y'\to X'$ is a branched immersion and no vertex of $Y'$ is locally separating, it follows that the image of $Y$ is contained entirely inside either $X_1$ or $X_2$, and since $Y'\to X'$ is non-trivial on fundamental groups, it must be $X_1$, as required.  This completes the proof that $Y$ unfolds to some $Y'$ such that $Y'\to X$ lifts to $X'$.

To complete the proof, we fold $Y'$ to construct a 2-complex $\widehat{Y}$.  Since $\widehat{X}\to X$ and $Y\to X$ are branched immersions, there is a well-defined fibre-product 2-complex $Y\times_X\widehat{X}$, with 1-skeleton equal to the fibre product of graphs ${G_Y}\times_{G_X}G_{\widehat{X}}$ and faces given by pulling back the faces of $Y$ and $\widehat{X}$.  The universal property of the fibre product now provides a canonical map $Y'\to Y\times_X\widehat{X}$, and $\widehat{Y}$ is the result of folding this map. By construction, the natural map $\widehat{Y}\to X$ factors through $\widehat{X}$.

Finally, since the identity map on $Y$ is the folded representative of the map $Y'\to Y$, the universal property of folded representatives provides a canonical morphism $Y\to\widehat{Y}$, and since $\widehat{Y}\to X$ factors through $\widehat{X}$, the result follows.
\end{proof}

\begin{remark}\label{rem: Irreducible cores are well defined}
The universal property of Lemma \ref{lem: Universal property of irreducible cores} implies in the usual way that irreducible cores are unique up to canonical isomorphism.  We may therefore speak of \emph{the} irreducible core of a finite, connected 2-complex $X$, as long as $\pi_1(X)$ is neither freely decomposable nor free.
\end{remark}

\begin{remark}\label{rem: Irreducible cores via relative Grushko}
The irreducible core of a connected 2-complex $X$ can also be characterised group-theoretically. Let $F$ be the fundamental group of the 1-skeleton, and let $\langle w_1\rangle,\ldots,\langle w_n\rangle$ be the cyclic subgroups of $F$ defined (up to conjugacy) by the faces of $X$. The pair $(F,\{\langle w_1\rangle,\ldots,\langle w_m\rangle\})$ has a relative Grushko decomposition $F=F_1*\ldots *F_n$.  Each $w_i$ is conjugate into a unique $F_{j(i)}$ and so, after conjugating the $w_i$, the pair $(F,\{\langle w_i\rangle\})$ decomposes as a free product of pairs $(F_k,\{\langle w_i\rangle\mid j(i)=k\})$. If $\pi_1(X)$ is neither freely decomposable nor free, then there is a unique index $k$ such that $F_k/\ncl{w_i\mid j(i)=k}$ is neither freely decomposable nor free. The 1-skeleton of the irreducible core $\widehat{X}$ is given by the core graph associated to the subgroup $F_k$ of $F$, and the 2-cells are given by the set $\{w_i\mid j(i)=k\}$.
\end{remark}

The results of this section guarantee that $X$ has an irreducible core if it is neither freely decomposable nor free. However, the converse is not true: $X$ may be irreducible, and yet $\pi_1(X)$ may be trivial, free,  freely indecomposable etc. For instance, the complex associated to the presentation
\[
\langle a,b\mid baba^{-2}, abab^{-2}\rangle
\]
of the trivial group is irreducible.

\subsection{Curvature conditions}

With the definitions of irreducibility in hand, we can now define non-positive and negative immersions.   These definitions are inspired by Wise \cite{wise_coherence_2003,wise_coherence_2020}, and similar definitions have played a role in several recent papers about one-relator groups, such as \cite{helfer-wise,louder-wilton,louder-wilton2}.  They are also naturally related to the work of Mart\'inez-Pedroza and Wise on sectional curvature for 2-complexes \cite{wise_sectional_2004,wise_nonpositive_2008,martinez-pedroza_coherence_2013}. 

It turns out that Wise's original definitions are too strong to prove Theorem \ref{thm: Coherence}. Here we will give slightly less restrictive definitions, using the notion of irreducible complexes from the last section. See Section \ref{sec: Wise's notion of sectional curvature} below for a comparison between our definitions and Wise's. As usual, $\chi(X)$ denotes the Euler characteristic of a complex $X$.

\begin{definition}\label{def: Non-positive immersions}
A finite 2-complex $X$ has \emph{non-positive immersions} if, for every immersion of finite complexes $Y\immerses X$, either
\begin{enumerate}[(i)]
\item $\chi(Y)\leq 0$, or
\item $Y$ is reducible.
\end{enumerate}
\end{definition}

{The non-positive immersions property} for presentation complexes of one-relator groups was proved in \cite{helfer-wise} and \cite{louder-wilton}.

The next definition is motivated by the observation that, if $f:Y\to X$ is an immersion, then $\deg(f)$ is equal to the number of faces of $Y$, and can be thought of as a useful adaptation of Euler characteristic to the setting of branched maps.  

\begin{definition}\label{def: Total curvature}
Let $f:Y\to X$ be a branched map of 2-complexes, and let $G$ be the 1-skeleton of $Y$. The quantity
\[
\tau(f):=\deg(f)+\chi(G)
\]
is called the \emph{total curvature} of $f$. We will usually abuse notation and write $\tau(Y)$ for $\tau(f)$, since  $\tau(f)=\chi(Y)$ when $f$ is an immersion.
\end{definition}

Indeed, the techniques of both papers \cite{helfer-wise,louder-wilton} prove something slightly stronger than non-positive immersions: the map can be taken to be a branched immersion, and the complex $Y$ is only required to not be visibly reducible.    For instance, \cite[Theorem 1.2]{louder-wilton} can be stated as follows, after accounting for small differences in terminology.

\begin{theorem}\label{thm: NPI for one-relator groups}
Let $X$ be the presentation complex of a torsion-free one-relator group.  If $Y$ is a finite complex and $Y\to X$ is a branched immersion then either
\begin{enumerate}[(i)]
\item $\tau(Y)\leq 0$, or
\item $Y$ has a free face, and in particular is visibly reducible.
\end{enumerate}
\end{theorem}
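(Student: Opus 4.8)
The plan is to deduce this directly from \cite[Theorem 1.2]{louder-wilton}; the methods of Helfer--Wise \cite{helfer-wise} give an alternative route. The only real work is to reconcile the set-up of \cite{louder-wilton} with the framework of Section \ref{sec: Preliminaries} and to confirm that the arguments there genuinely yield the two refinements being claimed here: that the hypothesis can be weakened from an immersion to a branched immersion, and that the conclusion can be sharpened from ``reducible'' to ``has a free face'' (which is case (iii) of Definition \ref{def: Visibly reducible and visibly irreducible}, and in particular makes $Y$ visibly reducible).

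First I would recall the mechanism of the proof. Since $X$ is the presentation complex of a torsion-free one-relator group, it has a single face, attached along a cyclically reduced word $w$ that is not a proper power; hence the attaching map of $X$ is an immersed cycle. Given a branched immersion $f\colon Y\to X$, one constructs a \emph{stacking}: a choice, over each edge $e$ of $G_Y$, of a linear order on the sheets $w_Y^{-1}(e)$ that is compatible with the local picture of $w$ in $X$. The existence of such a stacking is exactly where the hypothesis that $w$ is an honest relator is used: it follows from the fact that $w$, read around the boundary of the 2-cell, exhibits no cancellation in the relevant sense, and passing from immersions to branched immersions does not disturb this, since winding a face around the face of $X$ merely replicates sheets without changing the local structure of $w$ at any edge of $G_X$. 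One then locates a ``topmost'' face at a maximal edge and traces around its boundary: if at some point the boundary runs along an edge $e$ with $\#w_Y^{-1}(e)=1$, then $Y$ has a free face, which is case (ii); otherwise every edge traversed carries at least two sheets, and an Euler-characteristic count — comparing the number of (edge, sheet) incidences as seen from the faces and as seen from the $1$-skeleton — gives $\deg(f)+\chi(G_Y)\leq 0$, i.e.\ $\tau(Y)\leq 0$. Because $\deg(f)$ is precisely the total number of sheets, this is exactly the count for which $\tau$, rather than $\chi(Y)$, is the natural invariant, and it goes through for branched immersions verbatim.

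The main obstacle is bookkeeping rather than mathematics: \cite{louder-wilton} is written with Serre graphs and a slightly different notion of reducibility, and its conclusion is phrased in those terms (see also Remark \ref{rem: Terminology in other papers}). One must trace through the argument there to check that the face it produces is free in the sense used here, and that the only non-degeneracy needed on $Y$ — ruling out isolated points and valence-one vertices — is implicit in ``$Y$ a finite complex'' together with the stacking construction. A secondary point, already flagged above, is to verify that the ``no cancellation'' input to the stacking survives the passage from immersions to branched immersions; once that and the terminological dictionary are in place, the theorem follows.
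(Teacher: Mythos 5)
Your proposal is correct and takes essentially the same route as the paper, which offers no independent argument for this statement but simply observes that it is \cite[Theorem 1.2]{louder-wilton} (or the Helfer--Wise counting argument \cite{helfer-wise}) restated after accounting for small differences in terminology, the point being that the stacking proof already yields the stronger conclusion for branched immersions with $\chi$ replaced by $\tau$. Your sketch of the stacking mechanism, the topmost-face/free-face dichotomy, and the reason the count is insensitive to the passage from immersions to branched immersions is exactly the intended justification.
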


The definition of negative immersions is a natural adaptation of Definition \ref{def: Non-positive immersions}.  Again, this definition assumes a slightly stronger notion of irreducibility than was used in \cite{louder-wilton2}.

\begin{definition}\label{def: Negative immersions}
A finite 2-complex $X$ has \emph{negative immersions} if, for every immersion of finite complexes $Y\to X$,  either
\begin{enumerate}[(i)]
\item $\chi(Y)< 0$, or
\item $Y$ is reducible.
\end{enumerate}
\end{definition}

The property of negative immersions for one-relator groups was characterised in \cite{louder-wilton2}, {in which Theorem \ref{thm: Negative immersions} was proved.}

\begin{remark}\label{rem: Definitions of NI are consistent}
In \cite[Definition 1.1]{louder-wilton2}, a 2-complex $X$ is defined to have negative immersions if, for every immersion of finite complexes $Y\to X$, either $\chi(Y)<0$ or $Y$ Nielsen reduces to a graph. To say that $Y$ Nielsen reduces to a graph means that there is a homotopy equivalence of 1-skeleta $Y_{(1)}\simeq Z_{(1)}$ so that the 2-complex $Z$ obtained by attaching the 2-cells of $Y$ to $Z_{(1)}$ in the natural way is the result of wedging discs to a graph.

Let us briefly explain why Definition \ref{def: Negative immersions} agrees with \cite[Definition 1.1]{louder-wilton2}. To this end, let $Y\to X$ be an immersion of finite 2-complexes, and assume that $\chi(Y)\geq 0$. If $X$ satisfies \cite[Definition 1.1]{louder-wilton2} then $Y$ Nielsen reduces to a graph, and then $Y$ is reducible by Proposition \ref{prop: Whitehead}.

To prove the converse, assume that $X$ satisfies Definition \ref{def: Negative immersions}, and proceed by induction on the number of 2-cells of $Y$. Since $\chi(Y)\geq 0$, $Y$ is reducible by assumption, and Proposition \ref{prop: Whitehead} applies. Items (i) and (ii) of the proposition correspond to the base cases of the induction where $Y$ is a point, a circle or a disc. If item (iii) of Proposition \ref{prop: Whitehead} applies, then $Y$ unfolds to a 2-complex $Y'$ with a locally separating vertex $v$. Cutting along $v$ yields a (possibly disconnected) new 2-complex $Y'_1$, which folds to a 2-complex $Y_1$ immersing into $Y$ with $\chi(Y_1)=\chi(Y)+1$. Repeating this process $n$ times leads to a sequence of immersed 2-complexes
\[
Y_n\to Y_{n-1}\to\ldots\to Y_1\to Y
\]
with $\chi(Y_n)=\chi(Y)+n\geq n$, each with the same number of 2-cells as $Y$. Note that, for each $n$, $Y$ is Nielsen equivalent to the result of wedging $Y_n$ to a graph.

For large enough $n$, $Y_n$ must have a free face. Indeed, let $m$ be the number of 2-cells of $Y$ and let $l$ be their total edge length.  Then, unless $Y_n$ has a free face, it has at most $l/2$ edges and hence, since every vertex has valence at least 2, it has at most $l/2$ vertices, so $\chi(Y_n)\leq l/2+m$. Therefore, $Y_n$ must have a free face for $n>l/2+m$, as claimed. Collapsing a free face in such a $Y_n$ produces a new 2-complex $Y'$ with $\chi(Y')\geq 0$ and with fewer 2-cells than $Y$. Now $Y'$ Nielsen reduces to a graph by induction, and hence so does $Y$, as required.
\end{remark}

Theorem \ref{thm: Negative immersions} gives a useful statement, but the following more precise statement is an immediate consequence of \cite[Lemma 6.11]{louder-wilton2}, after noticing that the proof of that result works equally well if immersions are replaced by branched immersions and $\chi$ is replaced by $\tau$.

\begin{theorem}\label{thm: Precise negative immersions statement for branched immersions}
Let $X$ be the presentation complex of a torsion-free one-relator group $G=F/\ncl{w}$ with negative immersions. If $Y$ is a finite complex and $f:Y\to X$ is a branched immersion then either
\begin{enumerate}[(i)]
\item $\tau(Y)< 0$, or
\item $Y$ is reducible.
\end{enumerate}
\end{theorem}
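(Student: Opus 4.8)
The plan is to take the proof of \cite[Lemma 6.11]{louder-wilton2} --- which is essentially the statement above with ``branched immersion'' replaced by ``immersion'' and $\tau$ replaced by $\chi$ --- and verify that it survives those two generalisations. So the work is entirely in auditing that proof; I would not reprove it from scratch. The proof there is an induction (on the number of faces, organised through Whitehead-graph and primitivity-rank bookkeeping) in which $Y$ is either recognised as visibly reducible, or is cut along a vertex, split as a wedge $X_1\vee X_2$, or has a free face collapsed, and the inductive hypothesis is then applied to the pieces.

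First I would track the passage from immersions to branched immersions. The only local hypothesis on $Y\to X$ ever used in the argument is that it is locally injective on the $1$-skeleton and at the regular neighbourhoods of vertices --- which is exactly what Definition~\ref{def: Branched maps and branched immersions} requires of a branched immersion --- together with the existence of a well-defined degree on each face (Definition~\ref{def: Degree and immersions}). Every reduction move (passing to a subcomplex, a component of a cut, or a regular neighbourhood, or folding a morphism to its folded representative as in Definition~\ref{def: Folding 2-complexes}) sends a branched immersion to a branched immersion, so the inductive hypothesis keeps applying. The one genuinely new point is that for an immersion every face reads the relator $w$ in $\pi_1$ of the $1$-skeleton, whereas a branched-immersion face of degree $d$ reads $w^{d}$; here I would check that the estimates of \cite{louder-wilton2} concerning the ranks and Euler characteristics of subgroups carrying $w$ (or a power of $w$) non-primitively are insensitive to replacing $w$ by a power --- which is to be expected, since the whole point of defining $\tau$ is to absorb the degree.

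Second I would replace $\chi$ by $\tau=\deg(f)+\chi(Y^{(1)})$ everywhere. The only thing to verify is that $\tau$ obeys the same additivity relations used in the original proof: $\deg$ is manifestly additive over disjoint collections of faces, and $\chi$ of a $1$-skeleton satisfies the usual inclusion--exclusion when one cuts along a vertex, or along an edge together with a vertex, and is unchanged when a free face and its free edge are collapsed; so each identity $\chi(Y)=\chi(Y_1)+\chi(Y_2)-c$ appearing in \cite[Lemma 6.11]{louder-wilton2} becomes the same identity for $\tau$. Since $\tau=\chi$ when the map is an immersion, the base cases of the induction are literally unchanged. Two remaining bookkeeping points: the conclusion should be phrased with the (slightly more inclusive) notion of reducibility of this paper, which is harmless since it is weaker than the one implicit in \cite{louder-wilton2}, so reducibility there implies reducibility here; and the hypothesis of negative immersions enters, exactly as in \cite{louder-wilton2}, via Theorem~\ref{thm: Negative immersions} (equivalently $\pi(w)>2$) to upgrade the non-strict estimate to the strict inequality $\tau(Y)<0$ in the irreducible case.

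The main obstacle is the point flagged at the end of the second paragraph: confirming that the combinatorial heart of \cite[Lemma 6.11]{louder-wilton2} --- the part that actually consumes the hypothesis $\pi(w)>2$ --- is undisturbed when faces read powers of $w$ rather than $w$ itself. Everything else is routine checking that the cut-and-paste operations behave for branched immersions and for $\tau$ exactly as they do for immersions and $\chi$.
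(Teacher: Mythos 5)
Your proposal matches the paper's treatment exactly: the paper gives no independent argument for this theorem, stating only that it ``is an immediate consequence of \cite[Lemma 6.11]{louder-wilton2}, after noticing that the proof of that result works equally well if immersions are replaced by branched immersions and $\chi$ is replaced by $\tau$'' --- which is precisely the audit you outline. Your extra care about the direction of the reducibility implication (old-reducible implies new-reducible, since the present notion of irreducibility is stronger) and about faces reading powers of $w$ is correct and, if anything, more explicit than what the paper records.
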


Below, we will need the following improvement of this statement to the context of face-essential maps, 

\begin{corollary}\label{cor: Precise negative immersions statement for face-essential maps}
Let $X$ be the presentation complex of a torsion-free one-relator group $G=F/\ncl{w}$ with negative immersions. If $Y$ is a finite complex and $f:Y\to X$ is a face-essential map then either
\begin{enumerate}[(i)]
\item $\tau(Y)< 0$, or
\item $Y$ is reducible.
\end{enumerate}
\end{corollary}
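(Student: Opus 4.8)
The plan is to factor $f$ through its folded representative, thereby reducing to the branched-immersion case of Theorem~\ref{thm: Precise negative immersions statement for branched immersions}.

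First I would apply the folding construction of Definition~\ref{def: Folding 2-complexes}, writing $f$ as a composite
\[
Y \xrightarrow{\ f_0\ } \bar{Y} \xrightarrow{\ f_1\ } X ,
\]
where $f_1$ is a branched immersion and $f_0$ induces a $\pi_1$-surjective composition of folds on $1$-skeleta. As $f$ is face-essential, $f_0$ is a face-equivalence (Definition~\ref{def: Face-essential map}), so $S_Y\to S_{\bar{Y}}$ is an isomorphism and $\deg(f)=\deg(f_1)$; in particular $\bar{Y}$ is a finite complex with the same faces as $Y$. Comparing total curvatures and invoking Remark~\ref{rem: Boundary essential maps and Euler characteristic},
\[
\tau(\bar{Y})-\tau(Y)=\chi(G_{\bar{Y}})-\chi(G_Y)=\chi(\bar{Y})-\chi(Y)\ge 0 .
\]

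Now apply Theorem~\ref{thm: Precise negative immersions statement for branched immersions} to the branched immersion $f_1\colon\bar{Y}\to X$. If $\tau(\bar{Y})<0$, then $\tau(Y)\le\tau(\bar{Y})<0$ and conclusion~(i) holds. Otherwise $\bar{Y}$ is reducible, and it remains to deduce that $Y$ is reducible.

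For this last point I would show that the folding map $f_0$ cannot turn an irreducible complex into a reducible one; equivalently, that $Y$ irreducible implies $\bar{Y}$ irreducible. Using Lemma~\ref{lem: Unfolding all the way} and the dichotomy -- coming from Proposition~\ref{prop: Whitehead} -- that every finite complex is reducible or irreducible, it suffices to work componentwise with core $1$-skeleta and argue group-theoretically. The map $f_0$ induces a surjection $\phi\colon\pi_1(G_Y)\onto\pi_1(G_{\bar{Y}})$ of free groups that carries the conjugacy classes of the faces of $Y$ onto those of $\bar{Y}$ and preserves the number of faces. Given a free splitting $\pi_1(G_{\bar{Y}})=A*B$ (or one of the degenerate cases~(i), (ii) of Proposition~\ref{prop: Whitehead}) into which every face class of $\bar{Y}$ is conjugate, I would lift it to a free splitting of $\pi_1(G_Y)$ by distributing the kernel directions of $\phi$ as additional free factors, so arranged that every face class of $Y$ is still conjugate into one of the two sides; the relative Grushko description of Remark~\ref{rem: Irreducible cores via relative Grushko} is convenient bookkeeping for this. \emph{This lifting is the main obstacle}: a face class $w$ with $\phi(w)$ conjugate into $A$ need not itself be conjugate into the obvious lift of $A$, and different faces may pull the extra free factors toward opposite sides, so one must use that $\phi$ is a composition of genuine folds -- hence, after an automorphism of $\pi_1(G_Y)$, a retraction killing the new generators -- and that face classes are nontrivial, in order to choose a single splitting of $\pi_1(G_Y)$ compatible with every face at once. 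With that, Proposition~\ref{prop: Whitehead} gives that $Y$ is reducible, completing the argument.
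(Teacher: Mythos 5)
Your reduction to the folded representative and the inequality $\tau(Y)\le\tau(\bar Y)$ match the paper, and the case $\tau(\bar Y)<0$ is handled correctly. The gap is exactly where you flag it: the lemma you need --- that the fold $f_0\colon Y\to\bar Y$ cannot turn an irreducible complex into a reducible one --- is \emph{false} for a general $\pi_1$-surjective face-equivalence, so no amount of care in distributing the kernel of $\phi$ can make the lifting argument work. Concretely, let $Y$ be the rose on $\{a,b,c\}$ with one face attached along $w=abcbac$, and fold $a,b\mapsto x$, $c\mapsto y$ to obtain $\bar Y$, the rose on $\{x,y\}$ with a face attached along the (cyclically reduced, same-length) word $xxyxxy=(x^2y)^2$. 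The Whitehead graph of $abcbac$ is a single $6$-cycle, hence connected with no cut vertex, so $w$ lies in no proper free factor of $F(a,b,c)$ and $Y$ is visibly irreducible; but $(x^2y)^2$ lies in the proper free factor $\langle x^2y\rangle$ of $F(x,y)$, so $\bar Y$ is reducible by Proposition~\ref{prop: Whitehead}. Thus reducibility does not pull back along folds, and the splitting of $\pi_1(G_{\bar Y})$ cannot in general be lifted compatibly with the faces (here $abcbac$ is conjugate into no proper free factor at all).

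The paper circumvents this by splitting the remaining case ($\tau(Y)\ge 0$, hence $\tau(\bar Y)\ge 0$) according to the value of $\tau(\bar Y)$. If $\tau(\bar Y)=0$ then $\tau(Y)=\tau(\bar Y)$, which forces $f_0$ to be an \emph{essential equivalence}, i.e.\ a $\pi_1$-isomorphism on $1$-skeleta; only in that situation is reducibility of $\bar Y$ (supplied by Theorem~\ref{thm: Precise negative immersions statement for branched immersions}) transported back to $Y$ via the group-theoretic criterion of Proposition~\ref{prop: Whitehead}. If instead $\tau(\bar Y)>0$, the paper does not use reducibility of $\bar Y$ at all: it invokes the non-positive immersions statement, Theorem~\ref{thm: NPI for one-relator groups}, whose conclusion in that case is the stronger one that $\bar Y$ has a \emph{free face}; since $f_0$ is a face-equivalence, the unique face-side over that edge lies over a unique edge of $G_Y$, which is then a free face of $Y$, so $Y$ is visibly reducible. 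Your argument never appeals to Theorem~\ref{thm: NPI for one-relator groups} or to the free-face conclusion, and that is the missing ingredient; without it the second case cannot be closed.
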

\begin{proof}
Consider the folded representative
\[
Y\to \bar{Y}\to X
\]
so $\bar{Y}\to X$ is a branched immersion. Since $Y\to X$ is face essential,
\[
\tau(Y)\leq \tau(\bar{Y})\,,
\]
because $\deg{f}$ is equal to the degree of its folded representative, with equality if and only if $Y\to\bar{Y}$ is an essential equivalence.  Suppose now that $\tau(Y)=0$, so $\tau(\bar{Y})\geq 0$.  There are two cases to consider: either $\tau(\bar{Y})=0$ or $\tau(\bar{Y})>0$.

In the first case, $\tau(\bar{Y})=0$ so $Y\to\bar{Y}$ is an essential equivalence.  Theorem \ref{thm: Precise negative immersions statement for branched immersions} then implies that $\bar{Y}$ is reducible. Since reducibility is equivalent to the group-theoretic criteria of Proposition \ref{prop: Whitehead}, it follows that $Y$ is also reducible, because $Y\to\bar{Y}$ is an essential equivalence.

In the second case, $\tau(\bar{Y})>0$ so, by Theorem \ref{thm: NPI for one-relator groups}, $\bar{Y}$ has a free face, whence $Y$ also has a free face, and so is visibly reducible.
\end{proof}

Finally, we are ready to give a definition of \emph{uniform} negative immersions.

\begin{definition}\label{def: Uniform negative immersions}
A finite 2-complex $X$ has \emph{uniform negative immersions} if there is $\epsilon>0$ so that, for every immersion of finite complexes $Y\immerses X$, either
\begin{enumerate}[(i)]
\item \[
\frac{\chi(Y)}{\#\{2\mathrm{-cells\,of}\,Y\}}\leq -\epsilon\,,
\]
or
\item $Y$ is reducible.
\end{enumerate}
\end{definition}

Theorem \ref{thm: Uniform negative immersions} asserts that presentation complexes of one-relator groups with negative immersions also have uniform negative immersions.  

\subsection{Wise's notion of negative immersions}\label{sec: Wise's notion of sectional curvature}

We finish this section by contrasting Definition \ref{def: Uniform negative immersions} with the following definition of Wise.

\begin{definition}\label{def: Negative immersions in the sense of Wise}
A finite 2-complex $X$ has \emph{negative immersions in the sense of Wise} if there is $\epsilon>0$ so that, for every immersion of finite complexes $Y\immerses X$, either
\begin{enumerate}[(i)]
\item \[
\frac{\chi(Y)}{\#\{2\mathrm{-cells\,of}\,Y\}}\leq -\epsilon\,,
\]
or
\item $Y$ has a free face.
\end{enumerate}
\end{definition}

In \cite{wise_coherence_2020}, Wise proves that every one-relator group with torsion $G$ is coherent by showing that a presentation complex for (some finite-index subgroup of) $G$ satisfies Definition \ref{def: Negative immersions in the sense of Wise}, and then proving that this in turn implies coherence. The independent proof given in \cite{louder_one-relator_2020} can also be viewed this way.

Clearly, negative immersions in the sense of Wise implies our notions of negative immersions and uniform negative immersions. The following example shows that this implication is not reversible \cite{wise_coherence_2020}.

\begin{example}\label{eg: Negative immersions in the sense of Wise}
Let $X$ be the 2-complex associated to the presentation $G=F/\ncl{w}$, where $F$ is the free group $\langle a,b,c\rangle$ and
\[
w:=ab^2c^2ab^2c^2b^2c^2\,.
\]
By setting $a'=ab^2c^2$, this presentation can be seen to be Nielsen equivalent\footnote{Recall that two presentations are \emph{Nielsen equivalent} if one is taken to the other by applying a free-group automorphism to the generators.} to
\[
\langle a',b,c\mid (a')^2b^2c^2\rangle\,,
\]
the standard presentation of the fundamental group of the surface of Euler characteristic $-1$. In particular, $G$ is 2-free and so $X$ has negative immersions by Theorem \ref{thm: Negative immersions}, and indeed uniform negative immersions by Theorem \ref{thm: Uniform negative immersions}.

We now define a compact 2-complex $Y$ and an immersion to $X$.  The 1-skeleton of $Y$ is a rose with two petals labelled $\alpha$ and $\beta$, and immerses into $X$ via the assignment
\[
f:\alpha\mapsto a\,,\, \beta\mapsto b^2c^2\,,
\]
while a single face is attached along the word $u(\alpha,\beta)=\alpha\beta\alpha\beta^2$. Since $f(u(\alpha,\beta))=w(a,b,c)$, the immersion of 1-skeleta extends to an immersion $Y\immerses X$. Clearly $\chi(Y)=0$, but since both $\alpha$ and $\beta$ appear at least twice in $u$, $Y$ has no free face. Therefore, $Y$ does not have negative immersions in the sense of Wise.
\end{example}

The immersion $Y\immerses X$ does not contradict our definitions of (uniform) negative immersions (Definitions \ref{def: Negative immersions} and \ref{def: Uniform negative immersions}), since $u$ is a primitive element of the free group $\langle \alpha,\beta\rangle$, and so the complex $Y$ is reducible by Proposition \ref{prop: Whitehead}.

Example \ref{eg: Negative immersions in the sense of Wise} shows that one cannot prove Theorem \ref{thm: Coherence} by showing that all such complexes satisfy Definition \ref{def: Negative immersions in the sense of Wise}. It also illustrates another important difference between the definitions: since the standard presentation complex for the surface of Euler characteristic $-1$ does have negative immersions in the sense of Definition \ref{def: Negative immersions in the sense of Wise}, that notion is not invariant under Nielsen equivalence, whereas Definitions \ref{def: Negative immersions} and \ref{def: Uniform negative immersions} are, by Remark \ref{rem: Irreducible cores via relative Grushko}. 

\subsection{Stable commutator length}\label{sec: Stable commutator length}

Commutator length and its stabilisation provide natural quantifications of the complexity of an element of a commutator subgroup. See Calegari's monograph for a comprehensive treatment of stable commutator length \cite{calegari_scl_2009}.

\begin{definition}\label{def: (Stable) commutator length}
The \emph{commutator length} of an element $g$ of the commutator subgroup of a group $G$ is
\[
\cl(g)=\min\{n\in\NN \mid g = \prod_{i=1}^n [x_i,y_i]\,,\,x_i,y_i\in G\}\,,
\]
the minimal $n$ such that $g$ is a product of $n$ commutators. The \emph{stable commutator length} is defined to be
\[
\scl(g)=\inf_{n\in\NN}\frac{\cl(g^n)}{n}\,.
\]
\end{definition}

The purpose of this section is to prove Corollary \ref{cor: Weak Heuer}, and also to explain why Conjecture \ref{conj: Stability conjecture} implies Conjecture \ref{conj: Heuer's conjecture}. Both follow from the next proposition, which is due to Calegari \cite[Lemma 2.7]{calegari_surface_2008}.  Nevertheless, in order to clarify how the argument works in the setting of this paper, we give the proof here.

A 2-complex is called a \emph{surface} if its realisation is homeomorphic to a closed, 2-dimensional surface.

\begin{proposition}[Calegari]\label{prop: Essential map realising scl}
Let $X$ be the presentation complex of a one-relator group $F/\ncl{w}$ and suppose that $w\in [F,F]$. There is an essential branched map from an orientable surface $f:\widehat{\Sigma}\to X$ such that
\[
\frac{\tau(\widehat{\Sigma})}{\deg(f)}=1-2\scl(w)\,.
\]
\end{proposition}
\begin{proof}
As above, the data of $X$ consists of a map $w:S\immerses G$, where $G$ is the 1-skeleton and $w$ is the attaching map of the unique face.  An \emph{admissible surface} is a commutative diagram of continuous maps of topological spaces
\begin{center}
  \begin{tikzcd}
\partial\Sigma\arrow{r}\arrow{d} & \Sigma\arrow{d}\\
\real{S}\arrow{r}{w} &\real{G}
  \end{tikzcd}
\end{center}
such that $\Sigma$ is a compact surface without spherical components, $\partial\Sigma\to\Sigma$ is the natural inclusion of the boundary and $\partial\Sigma\to \real{S}$ is an orientation-preserving covering map. Calegari showed that
\begin{equation}\label{eqn: Admissibility}
\scl(w)\leq-\frac{\chi(\Sigma)}{2\deg(\partial\Sigma)}\,,
\end{equation}
where $\deg(\partial\Sigma)$ denotes the degree of the covering map $\partial\Sigma\to \real{S}$, for any admissible surface \cite[Proposition 2.10]{calegari_scl_2009}. If equality is realised, so
\begin{equation}\label{eqn: Extremality}
\scl(w)=-\frac{\chi(\Sigma)}{2\deg(\partial\Sigma)}\,,
\end{equation}
then $\Sigma$ is said to be \emph{extremal}. Calegari's rationality theorem \cite{calegari_stable_2009} (see also the account in \cite[Theorem 4.24]{calegari_scl_2009}) asserts that an extremal surface $\Sigma$ exists whenever $w\in [F,F]$.

A lemma of Culler \cite{culler-surfaces} (which is also implicit in the proof of Calegari's theorem) implies that $\Sigma$ has a spine $\Sigma_0$ such that the map $\Sigma\to \real{G}$ factors as the composition of a deformation retraction $\Sigma\to \real{\Sigma_0}$ with a morphism of graphs $\Sigma_0\to G$. Let $\mathbf{\widehat{\Sigma}}$ be the closed surface obtained by gluing discs to the boundary components of $\Sigma$. Then $\mathbf{\widehat{\Sigma}}$ naturally has the structure of a 2-complex $\widehat{\Sigma}$ with 1-skeleton $\Sigma_0$, and the map $\Sigma\to \real{G}$ extends to a morphism of 2-complexes $f:\widehat{\Sigma}\to X$. 

The extremality equation (\ref{eqn: Extremality}) can be rearranged to give the required relationship between total curvature and stable commutator length. Indeed, because the 1-skeleton of $\widehat{\Sigma}$ is a deformation retract of $\Sigma$,
\[
\tau(f)=\deg(f)+\chi(\Sigma)=\deg(f)(1-2\scl(w))
\]
which gives the required equation.

It now remains to show that $f$ is essential. This follows from a lemma of Calegari \cite[Lemma 2.7]{calegari_surface_2008}, which in turn relies on the subgroup separability of free groups, proved by Marshall Hall Jr \cite{hall_jr_subgroups_1949}. For completeness, we outline the proof.

Suppose that $f$ is not $\pi_1$-injective on some connected component of the 1-skeleton. Then there is a homotopically essential closed curve $\gamma$ on $\Sigma$ such that $f\circ\gamma$ is homotopically trivial in $\real{G}$. Since $\pi_1(\Sigma)$ is subgroup separable there is, for some finite $d$, a $d$-sheeted covering space $\Sigma_1\to\Sigma$ such that, after a homotopy, $\gamma$ lifts to a simple closed curve $\gamma_1$ on $\Sigma_1$. Let $\Sigma_2$ be the result of surgery on $\Sigma_1$ along $\gamma_1$; that is, $\Sigma_2$ is constructed by cutting $\Sigma_1$ along $\gamma_1$ and gluing in two discs along the resulting boundary components. Since $\gamma_1$ maps to a null-homotopic curve in $G$, $f$ extends to a map $\Sigma_2\to \real{G}$ that makes $\Sigma_2$ into an admissible surface. We now calculate:
\begin{eqnarray*}
-\frac{\chi(\Sigma_2)}{2\deg(\partial\Sigma_2)}&=&-\frac{\chi(\Sigma_1)+2}{2\deg(\partial\Sigma_1)}\\
&<&-\frac{\chi(\Sigma_1)}{2\deg(\partial\Sigma_1)}\\
&=&-\frac{d\chi(\Sigma)}{2d\deg(\partial\Sigma)}\\
&=&\scl(w)
\end{eqnarray*}
contradicting (\ref{eqn: Admissibility}). Hence, $f$ is $\pi_1$-injective on each connected component.

Now suppose that $f$ is not face-essential. Then the map from $\widehat{\Sigma}$ to its folded representative is not injective on faces, so there are distinct points $x,y\in \partial\Sigma$ with the same image in $\overline{\Sigma}_0$, the folded representative of the spine. Fix any path $\alpha$ in $\Sigma$ from $x$ to $y$. The image of $\alpha$ in $\overline{\Sigma}_0$ is a loop and so, since the map $\Sigma\to\mathbf{\overline{\Sigma}_0}$ is surjective on fundamental groups, we may concatenate $\alpha$ with a loop in $\Sigma$ so that its image in $\mathbf{\overline{\Sigma}_0}$, and hence in $G$, is null-homotopic. That is to say, we may choose $\alpha$ so that $f\circ\alpha$ is a null-homotopic loop. Next, as above, modify $\alpha$ by a homotopy and pass to a $d$-sheeted cover $\Sigma_1\to \Sigma$ so that $\alpha$ lifts to an embedded arc $\alpha_1$. 

 Let $\Sigma_2$ be constructed by attaching an interval $\beta_1$ to $\partial\Sigma_1$ at the endpoints of $\alpha_1$ and thickening it to a 1-handle. The map $f$ extends over this 1-handle and, after a homotopy to ensure that $\partial\Sigma_2\to S$ is a covering map, this makes $\Sigma_2$ into an admissible surface. Note that $\chi(\Sigma_2)=\chi(\Sigma_1)-1$ and $\deg(\partial\Sigma_2)=\deg(\partial\Sigma_1)$. The concatenation of $\alpha_1$ and $\beta_1$ defines an embedded loop $\gamma_2$ in $\Sigma_2$ such that $f\circ\gamma_2$ is null-homotopic so, as in the proof of $\pi_1$-injectivity, we may perform surgery on $\gamma_2$ to obtain a new admissible surface $\Sigma_3$. As before, we calculate:
\begin{eqnarray*}
-\frac{\chi(\Sigma_3)}{2\deg(\partial\Sigma_3)}&=&-\frac{\chi(\Sigma_2)+2}{2\deg(\partial\Sigma_2)}\\
&=&-\frac{\chi(\Sigma_1)+1}{2\deg(\partial\Sigma_1)}\\
&<&-\frac{\chi(\Sigma_1)}{2\deg(\partial\Sigma_1)}\\
&=&-\frac{d\chi(\Sigma)}{2d\deg(\partial\Sigma)}\\
&=&\scl(w)
\end{eqnarray*}
which, again, contradicts (\ref{eqn: Admissibility}). Hence, $f$ is face-essential as claimed.
\end{proof}

Corollary \ref{cor: Weak Heuer} now follows quickly from Corollary \ref{cor: Precise negative immersions statement for face-essential maps} and Proposition \ref{prop: Essential map realising scl}.

\begin{proof}[Proof of Corollary \ref{cor: Weak Heuer}]
By Theorem \ref{thm: Negative immersions}, the hypothesis that $\pi(w)>2$ is equivalent to the statement that $X$ has negative immersions. Consider the face-essential map $\widehat{\Sigma}\to X$ provided by Proposition \ref{prop: Essential map realising scl}. Since $\widehat{\Sigma}$ is a closed surface  each condition of Definition \ref{def: Visibly reducible and visibly irreducible} can be easily checked, and we see that $\widehat{\Sigma}$ is (visibly) irreducible. Combining Corollary \ref{cor: Precise negative immersions statement for face-essential maps} and Proposition \ref{prop: Essential map realising scl} then gives
\[
1-2\scl(w)=\frac{\tau(\widehat{\Sigma})}{\deg(f)}<0
\]
as required.
\end{proof}

The proposition also immediately implies that Heuer's conjecture follows from our Conjecture \ref{conj: Stability conjecture}.

\begin{corollary}\label{cor: Stability implies Heuer}
Conjecture \ref{conj: Stability conjecture} implies Conjecture \ref{conj: Heuer's conjecture}.
\end{corollary}
\begin{proof}
Let $w\in [F,F]$ be non-trivial and let $X$ be the presentation complex of the associated one-relator group $F/\ncl{w}$. Applying Conjecture \ref{conj: Stability conjecture} to the essential map $f:\widehat{\Sigma}\to X$ provided by Proposition \ref{prop: Essential map realising scl} gives
\[
1-2\scl(w)=\frac{\tau(\widehat{\Sigma})}{\deg(f)}\leq 2-\pi(w)
\] 
which rearranges to give $2\scl(w)\geq \pi(w)-1$, as required.
\end{proof}

\section{A linear system}\label{sec: A linear system}

The principal extra ingredient in the proof of Theorem \ref{thm: Uniform negative immersions} is Theorem \ref{thm: Rationality theorem}, a rationality theorem in the spirit of the main results of \cite{calegari_stable_2009} and \cite{wilton_essential_2018}. Fix a finite 2-complex $X$.  The idea is to encode face immersions from visibly irreducible complexes to $X$ as the integral points of a system of linear equations and inequations. 

\subsection{Vertex and edge pieces}

Consider a face-essential morphism $f:Y\to X$ with $Y$ visibly irreducible, and a face immersion
\[
Y\stackrel{f_0}{\to} Z\stackrel{f_1}{\to} X
\]
for $f$.  The variables of the linear system count the combinatorial types of the vertices of $Z$ and their preimages in $Y$. These are encoded as maps between (disjoint unions of) asterisk pre-complexes.

\begin{figure}[ht]
  \centerline{
    \includegraphics[width=\textwidth]{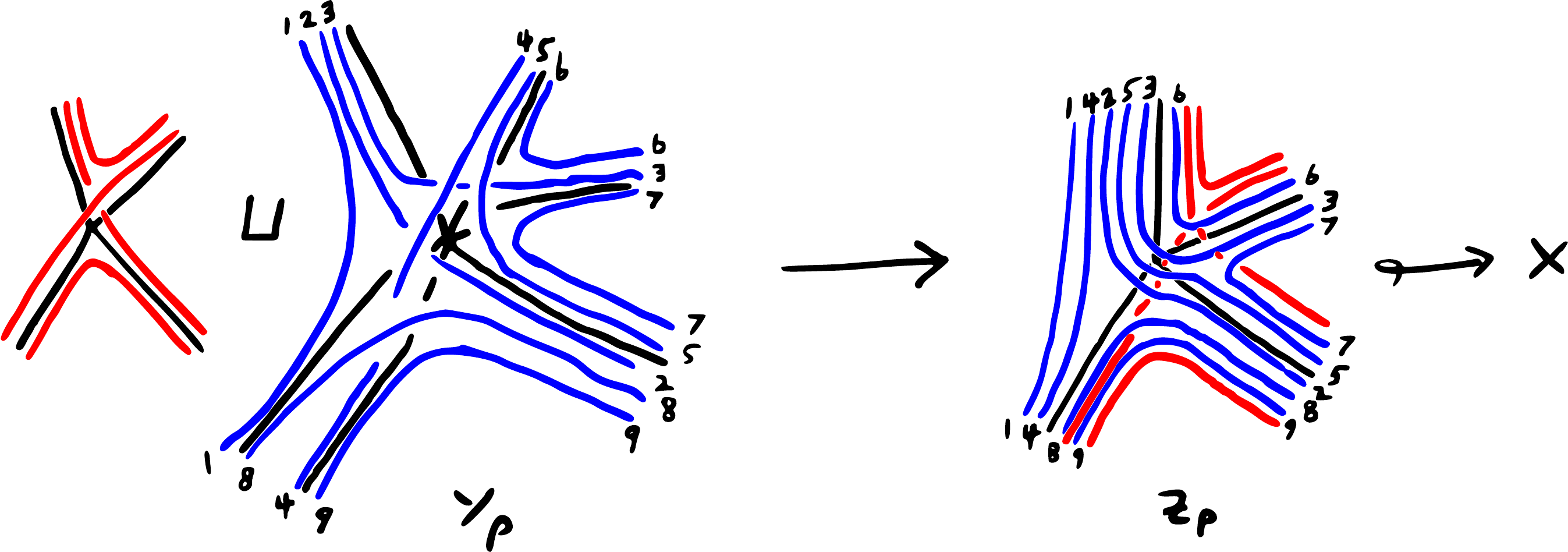}
  }
  \caption{An example of a vertex piece.}
  \label{fig: pieces}
\end{figure}

\begin{definition}\label{def: Vertex piece}
A \emph{vertex piece} $P$ over a 2-complex $X$ is a face immersion
\[
Y(P)\to Z(P)\to X
\]
where $Y(P)$ is visibly irreducible and and $Z(P)$ is an asterisk pre-complex. (In particular, $Y(P)$ is a disjoint union of asterisk pre-complexes.) Vertex pieces are considered up to the natural notion of isomorphism: if there are isomorphisms of pre-complexes $Y(P)\to Y(Q)$ and $Z(P)\to Z(Q)$ commuting with the maps $Y(\bullet)\to Z(\bullet)\to X$ then $P$ and $Q$ are isomorphic.
\end{definition}

For a 2-complex $X$, the set $\calP=\calP(X)$ of isomorphism types of vertex pieces over vertices of $X$ indexes the variables of our linear system. It is of crucial importance that there are only finitely many variables.

\begin{lemma}\label{lem: Finiteness of the set of pieces}
If $X$ is a finite complex then $\calP$ is finite.
\end{lemma}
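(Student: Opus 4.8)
The plan is to bound each of the combinatorial ingredients of a vertex piece $P$ in terms of the finite data of $X$, and conclude there are only finitely many isomorphism types. Recall that a vertex piece is a face immersion $Y(P)\to Z(P)\to X$ with $Z(P)$ an asterisk pre-complex and $Y(P)$ visibly irreducible; the map $Y(P)\to Z(P)$ is a face-equivalence and $Z(P)\to X$ is a branched immersion. Since $Z(P)$ is an asterisk (one vertex) and the branched immersion $Z(P)\to X$ embeds the regular neighbourhood of that vertex into the regular neighbourhood $N_X(v)$ of its image vertex $v$, the pre-complex $Z(P)$ is determined up to isomorphism by a choice of vertex $v$ of $X$ together with a sub-object of $N_X(v)$; as $X$ is finite, there are only finitely many possibilities for $Z(P)$.

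First I would bound the size of $Y(P)$. The face-equivalence $Y(P)\to Z(P)$ induces an isomorphism $S_{Y(P)}\to S_{Z(P)}$ on faces, so $Y(P)$ and $Z(P)$ have the same (finite, bounded) number of face-components; since each such component is an open arc and its attaching map into $G_{Z(P)}$ has bounded length (bounded by the size of $N_X(v)$), the total number of edge-incidences, and hence the number of edges of $G_{Y(P)}$, is bounded. Because $Y(P)$ is visibly irreducible, its $1$-skeleton has no valence-$1$ vertex, so the number of vertices of $G_{Y(P)}$ is at most the number of edges; thus $G_{Y(P)}$ has boundedly many vertices and edges. Combined with the bounded face data, this shows that the combinatorial type of the pre-complex $Y(P)$ ranges over a finite set.

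The remaining ingredient is the map $Y(P)\to Z(P)$ itself: a morphism between two pre-complexes drawn from finite sets is a pair of maps on (finite) vertex and edge sets, so there are only finitely many such morphisms, and likewise only finitely many branched immersions $Z(P)\to X$. Therefore the triple $(Y(P)\to Z(P)\to X)$ ranges over a finite set, and so does its isomorphism type. The main obstacle is simply making the length bound on the attaching arcs precise: one must observe that an arc-component of $S_{Z(P)}$ immerses into $G_{Z(P)}$, which immerses into $N_X(v)$, so the arc visits each edge of $N_X(v)$ boundedly often — here the fact that $w_X\colon S_X\immerses G_X$ is a (closed) immersion and $X$ is finite is exactly what is needed. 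Once that length bound is in hand, all the finiteness statements above follow by elementary counting, so I do not expect any genuine difficulty.
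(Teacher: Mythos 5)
Your proposal is correct and follows essentially the same route as the paper: bound $Z(P)$ by injectivity of the branched immersion into the finite $N_X(v)$, bound $S_{Y(P)}$ via the face-equivalence, and then use visible irreducibility to bound the $1$-skeleton of $Y(P)$, leaving only finitely many possible morphisms. The only cosmetic difference is that the paper bounds $G_{Y(P)}$ by noting each vertex and edge has at least two preimages in $S_{Y(P)}$, whereas you count edge-incidences of the (bounded-length) attaching arcs — which implicitly uses the same fact that visible irreducibility forces every edge to meet a face.
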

\begin{proof}
Every regular neighbourhood $N(v)$ is finite.  Since $Z(P)\to N_X(v)$ is a branched immersion from an asterisk pre-complex it is injective, so there are only finitely many possible combinatorial types for $Z(P)$ and the map to $N_X(v)$.  Since $Y(P)\to Z(P)$ is a face-equivalence, the cardinality of $S_{Y(P)}$ is also bounded. Finally, since $Y(P)$ is visibly irreducible, each edge and vertex of $Y(P)$ has at least two pre-images in $S_{Y(P)}$, so the cardinality of $G_{Z(P)}$ is also bounded. So there are at most boundedly many different possible isomorphism types for $Y(P)$, and hence also only boundedly many possible maps $Y(P)\to Z(P)$.
\end{proof}

The vertex pieces will define the variables of our linear system.  Edge pieces will give the defining equations. An \emph{edge pre-complex} is a pre-complex whose underlying pre-graph consists of a single edge and no vertices.

\begin{definition}\label{def: Edge piece}
An \emph{edge piece} $R$ over an edge $e$ of a 2-complex $X$ consists of a face immersion
\[
Y(R)\to Z(R)\to X
\]
where $Z(R)$ is an edge pre-complex. Again, edge pieces are considered up to the natural notion of isomorphism.
\end{definition}

Let $\mathcal{E}=\mathcal{E}(X)$ denote the set of isomorphism types of edge pieces over edges of $X$.

\subsection{Weight vectors}

Fix a finite 2-complex $X$. Again, consider a face immersion
\[
Y\stackrel{f_0}{\to} Z\stackrel{f_1}{\to} X
\]
for a face-essential morphism $f:Y\to X$ with $Y$ visibly irreducible.  Although this face immersion is our actual object of study, we will often abuse notation and just denote it by $f$, or even $Y$.  

\begin{definition}\label{def: Induced pieces}
Each vertex $v$ of $Z$ defines an \emph{induced vertex-piece} $P\equiv P_Y(v)$ over $f_1(v)$ as follows:
\begin{enumerate}[(i)]
\item $Z(P):= N_{Z}(v)$;
\item $Y(P):= \coprod_{f_0(u)=v} N_Y(u)$.
\end{enumerate}
The morphisms $Y(P)\to Z(P)\to X$ are the natural induced morphisms.  Each edge $e$ of $Z$ defines an \emph{induced edge-piece} $R_Y(e)$ in a similar manner.
 \end{definition}

 We translate face immersions into linear algebra by counting the number of times that each vertex-piece appears.  This count naturally lives in the following vector space.
 
\begin{definition}\label{def: Space of weight vectors}
 The \emph{space of weight vectors} $\RR[\calP]$ is the real vector space of formal linear sums 
\[
v=\sum_{P\in\calP} v_P P
\]
of vertex-pieces $P\in\calP$; the elements of this vector space are called \emph{weight vectors}. We will also talk about the additive subgroups $\QQ[\calP]$ and $\ZZ[\calP]$ consisting of rational and integral points respectively.
\end{definition}

There is a map that associates to the face immersion
\[
Y\stackrel{f_0}{\to} Z\stackrel{f_1}{\to} X
\]
from a visibly irreducible complex $Y$ a weight vector:
\[
v(Y):=\sum_{y\in V_Z} P_Y(y)\,,
\]
recalling that $V_Z$ is the set of vertices of the 1-skeleton of $Z$. Clearly, $v(Y)$ has {non-negative} integral coordinates, {and, in fact}, the possible weight vectors are exactly the points with { non-negative integral coordinates} that satisfy a certain system of linear equations, called the \emph{gluing equations}. These gluing equations make use of \emph{boundary maps}, which associate to each vertex piece the induced edge-pieces of the incident vertices.  Let $\RR[\calE]$ be the real vector space of formal linear sums of edge pieces.

\begin{definition}\label{def: Boundary maps and gluing equations}
Recall from Definition \ref{def: Induced pieces} that, for any vertex piece
\[
Y(P)\to Z(P)\to X
\]
and any edge $e$ of $Z(P)$, there is an induced edge piece $R_{Y(P)}(e)$.

The boundary map $\partial^\iota:\RR[\calP] \to \RR[\calE]$ is defined by extending
\[
\partial^\iota\negthinspace(P):=\sum_{e\in E^\iota_{Z(P)}} R_{Y(P)}(e)
\]
linearly.  Similarly, the assignment
\[
\partial^\tau\negthinspace(P):=\sum_{e\in E^\tau_{Z(P)}} R_{Y(P)}(e)
\]
extends linearly to define the boundary map $\partial^\tau:\RR[\calP] \to \RR[\calE]$. We then set
\[
\partial:=\partial^\iota-\partial^\tau\,.
\]
A vector $v\in\RR[\calP]$ is said to \emph{satisfy the gluing equations} if $\partial(v)=0$.
\end{definition}

The domain of the optimisation problem of interest to us is the cone of { non-negative} vectors that satisfy the gluing equations. To this end, set
\[
\RR_{\geq 0}[\calP]:=\left\{\sum_{P\in\calP} v_P P\,\middle|\, v_P\geq 0\textrm{~for\,all\,}P\right\}\,.
\] 
We are interested in the cone
\[
C(\RR):=\ker{\partial}\cap\RR_{\geq 0}[\calP]\,,
\]
and also in the sets of integer points $C(\ZZ):=C(\RR)\cap \ZZ[\calP]$ and of rational points $C(\QQ):=C(\RR)\cap \QQ[\calP]$. The key fact here is that the map to weight vectors surjects the integer points of $C(\RR)$. It is also finite-to-one, although we will not make use of that here.

\begin{proposition}\label{prop: Surjectivity of map to weight vectors}
Let $X$ be a finite 2-complex. If $Y$ is a finite, visibly irreducible complex, $Z$ is also a complex and
\[
Y\to Z\to  X
\]
is a face immersion then $v(Y)\in C(\ZZ)$. Conversely, if $v\in C(\ZZ)$ then there is a face immersion from a finite visibly irreducible complex
\[
Y\to Z\to X
\]
such that $v=v(Y)$.
\end{proposition}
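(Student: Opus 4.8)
The plan is to move back and forth between a face immersion $Y\to Z\to X$ (with $Y$ visibly irreducible and $Z$ a complex) and the decomposition of $Z$ into the regular neighbourhoods of its vertices. Recall from Remark \ref{rem: Gluing regular neighbourhoods} that $Z$ is recovered from $\coprod_{v\in V_Z} N_Z(v)$ by gluing edges in pairs; under this decomposition the pair $\big(N_Z(v),\coprod_{f_0(u)=v}N_Y(u)\big)$ is exactly the induced vertex piece $P_Y(v)$, and each edge $e$ of $Z$ is obtained by gluing the $E^\iota$-half-edge lying over it in $N_Z(\iota(e))$ to the $E^\tau$-half-edge lying over it in $N_Z(\tau(e))$, both of which carry the induced edge piece $R_Y(e)$.

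For the forward direction, $v(Y)=\sum_{v\in V_Z}P_Y(v)$ has non-negative integral coordinates, as already noted, so all that must be checked is that it satisfies the gluing equations. By Definition \ref{def: Boundary maps and gluing equations}, $\partial^\iota(v(Y))=\sum_{v\in V_Z}\sum_{e\in E^\iota_{N_Z(v)}}R_{Y(P_Y(v))}(e)$; since $Z$ is a complex, as $v$ ranges over $V_Z$ and $e$ over the $E^\iota$-half-edges of $N_Z(v)$ one encounters each edge of $Z$ exactly once, contributing the induced edge piece $R_Y(e)$, so $\partial^\iota(v(Y))=\sum_{e\in E_Z}R_Y(e)$. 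The identical computation with $\tau$ in place of $\iota$ gives $\partial^\tau(v(Y))=\sum_{e\in E_Z}R_Y(e)$, whence $\partial(v(Y))=0$ and $v(Y)\in C(\ZZ)$.

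For the converse, let $v=\sum_P v_P P\in C(\ZZ)$, so the $v_P$ are non-negative integers with $\partial(v)=0$. Build $Y_0\to Z_0\to X$ as the disjoint union of $v_P$ copies of each vertex piece $P$: then $Z_0$ is a disjoint union of asterisk pre-complexes (one vertex apiece), $Y_0\to Z_0$ is a face-equivalence, and $Z_0\to X$ is a branched immersion. Every edge of $Z_0$ is a half-edge, lying in exactly one of $E^\iota_{Z_0}$, $E^\tau_{Z_0}$, and by Definition \ref{def: Boundary maps and gluing equations} the induced edge pieces carried by the $E^\iota$-half-edges are enumerated with multiplicity by $\partial^\iota(v)$ and those of the $E^\tau$-half-edges by $\partial^\tau(v)$. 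Since $\partial(v)=0$ these two families agree as multisets of edge-piece types, so we may fix a bijection pairing each $E^\iota$-half-edge of $Z_0$ with an $E^\tau$-half-edge carrying an isomorphic induced edge piece. For each such pair, the chosen isomorphism of edge pieces supplies precisely the data required by Definition \ref{def: Gluing} — in particular an identification of the $X$-images of the face-preimages over the two half-edges — to glue them in $Z_0$, and, via the $Y$-part of the same isomorphism, to glue compatibly the edges of $Y_0$ lying over them. Performing all of these gluings yields $Y\to Z\to X$.

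It remains to check that this output has the required properties, and this is where the real work lies. Gluing $G$-edges neither creates nor merges vertices, so the regular neighbourhood of each vertex of $Z$ (resp.\ $Y$) coincides with the corresponding asterisk piece $Z(P)$ (resp.\ a component of $Y(P)$); this immediately shows $Z\to X$ remains a branched immersion and $Y$ inherits conditions (ii), (iv), (v) of Definition \ref{def: Visibly reducible and visibly irreducible} from the pieces. Every half-edge of $Z_0$, hence every edge of $Y_0$, is glued to exactly one other, so every edge of $Y$ has at least two face-preimages — using, as in the proof of Lemma \ref{lem: Finiteness of the set of pieces}, that an edge of a visibly irreducible pre-complex has at least two face-preimages — which gives condition (iii); condition (i) is immediate. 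Thus $Y$ is visibly irreducible. The same gluings pair up the free ends of the faces $S_{Z_0}$ and $S_{Y_0}$ and supply second endpoints for all edges, so $S_Z$ and $S_Y$ become disjoint unions of cycles and $Z$, $Y$ are complexes; and $Y\to Z$ is still an isomorphism on faces, so $Y\to Z\to X$ is a face immersion. Finally, the vertices of $Z$ are the asterisk-vertices of the chosen copies, $v_P$ of them of each "type" $P$, and for each such vertex the induced vertex piece is isomorphic to $P$: its $Z$-part is $N_Z$ of that vertex, namely $Z(P)$, and its $Y$-part, the disjoint union of the regular neighbourhoods of the vertices of $Y$ over it, reassembles $Y(P)$. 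Hence $v(Y)=\sum_P v_P P=v$. The principal obstacle is thus organising the gluing in the converse carefully enough to see simultaneously that the faces close up into cycles, that no free face is created, and that the local structure at vertices — and hence visible irreducibility — is preserved.
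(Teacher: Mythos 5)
Your proof is correct and follows essentially the same route as the paper's: the forward direction counts each edge of $Z$ once in $\partial^\iota$ and once in $\partial^\tau$ via its two endpoints, and the converse builds the disjoint union of $v_P$ copies of each vertex piece, uses $\partial(v)=0$ to pair $E^\iota$- with $E^\tau$-half-edges carrying isomorphic induced edge pieces, and glues. Your additional explicit verification that the result is a visibly irreducible complex with $v(Y)=v$ fills in details the paper leaves as ``by construction,'' but introduces no new ideas.
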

\begin{proof}
Consider an edge piece $R\in\calE$ over $X$.  The $R$-coefficient of $\partial^{\iota}\negthinspace(v(Y))$ counts the number of vertices $z$ of $Z$ for which there is an edge $e$ with $\iota(e)=z$ such the induced edge piece $R_Y(e)$ is isomorphic to $R$.  Since $Z$ is a complex, such vertices $z$ are in bijection with such edges $e$, and so this coordinate is equal to the number of edges $e$ of $Z$ such that $R_Y(e)$ is isomorphic to $R$: notice that the latter quantity does not mention $\iota$. Similarly, the $R$-coefficient of $\partial^{\tau}\negthinspace(v(Y))$ also counts the number of such edges $e$, and so $\partial^\iota\negthinspace(v(Y))=\partial^{\tau}\negthinspace(v(Y))$. Therefore, $v(Y)\in C(\ZZ)$ as claimed.

To prove surjectivity, consider an integral weight vector $v\in C(\ZZ)$, and define the pre-complexes
\[
Z':=\coprod_{P\in\calP}\coprod_{i_P=1}^{v_P} Z(P)
\] 
and
\[
Y':=\coprod_{P\in\calP}\coprod_{i_P=1}^{v_P} Y(P)\,,
\]
noting that these come naturally equipped with maps to define a face immersion
\[
Y'\to Z'\to X
\]
Furthermore, $Y'$ is visibly irreducible by construction.

The gluing equations imply that for each edge piece $R$ there is a bijection between the set of edges $e\in E^{\iota}_{Z'}$ such that $R_{Y'}(e)=R$ and the set of $e\in E^{\tau}_{Z'}$ such that $R_{Y'}(e)=R$.  Therefore, we may choose a bijection $F:E^{\iota}_{Z'}\to E^{\tau}_{Z'}$ such that $R_{Y'}(F(e))=R_{Y'}(e)$. Inductively applying the gluing construction from Definition \ref{def: Gluing}, $Z'$ can be glued up to a complex $Z$ equipped with a branched immersion $Z\to X$ and a branched map $Y'\to Z$.

It remains to explain how to glue up $Y'$ to create complex $Y$.  For an edge $e$ of $Z$, let $P(\iota(e))$ denote the piece used to construct the vertex $\iota(e)$ of $Z$, and let $P(\tau(e))$ the piece used to construct the vertex $\tau(e)$ of $Z$.  The pre-complexes $Y(P(\iota(\epsilon)))$ and $Y(P(\tau(\epsilon)))$ each naturally induce partitions of $w_Z^{-1}(e)$, and by construction, these two partitions are equal. Therefore, there is a bijection $G:E^{\iota}_{Y'}\to E^{\tau}_{Y'}$ that respects the map $Y'\to Z$. Again, inductively applying the gluing construction from Definition \ref{def: Gluing}, $Y'$ can be glued up to a complex $Y$ equipped with face-equivalence $Y\to Z$.  By construction, $Y$ is visibly irreducible. 
\end{proof}

\subsection{Degree and curvature}

The final ingredient is to notice that, given a face-essential map $f:Y\to X$, the total curvature and the degree are both realised by linear functions on $C(\RR)$.

\begin{definition}\label{def: Degree as a linear function}
Let $l_X(C)$ denote the number of vertices in each component $C$ of $S_X$. Recall that a vertex-piece $P\in\calP$ consists of a face immersion
\[
Y(P)\to Z(P)\to X\,.
\]
In particular, each vertex $x$ of $S_{Y(P)}$ is sent to some component $C_x$ of $S_X$. The degree map $\deg:\RR[\calP]\to\RR$ is now given by linearly extending the assignment
\[
\deg(P):=\sum_{x\in V_{S_{Y(P)}}}\frac{1}{l_X(C_x)}\,.
\]
\end{definition}

The point of this definition is to extend the definition of the degree of a face-essential map.

\begin{lemma}\label{lem: Degree is linear}
If 
\[
Y\to Z\to X
\]
is a face immersion for a face-essential map $f:Y\to X$, then
\[
\deg(v(Y))=\deg(f)\,.
\]
\end{lemma}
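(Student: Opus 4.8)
The plan is to unwind the two definitions and check that both sides count the same thing, weighted appropriately. Recall that $v(Y) = \sum_{y \in V_Z} P_Y(y)$, so by linearity of $\deg$ we have $\deg(v(Y)) = \sum_{y \in V_Z} \deg(P_Y(y))$. Each summand expands, by Definition \ref{def: Degree as a linear function}, to $\sum_{x \in V_{S_{Y(P_Y(y))}}} 1/l_X(C_x)$, and by Definition \ref{def: Induced pieces} the faces $S_{Y(P_Y(y))}$ are exactly $\coprod_{f_0(u) = y} S_{N_Y(u)}$. So I would first rewrite $\deg(v(Y))$ as a double sum over vertices $u$ of $Y$ (using that $f_0 : Y \to Z$ hits every vertex of $Z$, and each vertex $u$ of $Y$ maps to a unique $y = f_0(u)$), and then over the vertices $x$ of the link $S_{N_Y(u)}$, of $1/l_X(C_x)$.

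Next I would identify the vertices of $S_{N_Y(u)}$, for $u$ ranging over $V_Y$, with the vertices of $S_Y$. Indeed, for a complex $Y$ the faces $S_Y$ form a disjoint union of cycles, and by Definition \ref{def: Regular neighbourhood of a vertex} the faces of the regular neighbourhood $N_Y(u)$ are the pullback $S_Y \times_{G_Y} N_{G_Y}(u)$; its vertices are precisely the vertices of $S_Y$ that $w_Y$ sends to $u$, counted with the multiplicity built into the asterisk $N_{G_Y}(u)$ (a single vertex of $S_Y$ sent to $u$ contributes one vertex to $S_{N_Y(u)}$ for each of the two half-edges of $S_Y$ at it mapping to the two half-edges of $G_Y$ at $u$ — but since $S_Y$ is a disjoint union of cycles and $w_Y$ is an immersion, each vertex $x$ of $S_Y$ has its two incident half-edges and these land on $u$ in a way matching exactly one vertex of $N_{G_Y}(u)$). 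The upshot is a bijection $\coprod_{u \in V_Y} V_{S_{N_Y(u)}} \cong V_{S_Y}$ compatible with the maps to the components of $S_X$; under this bijection the weight $1/l_X(C_x)$ is unchanged because $f$ and its face immersion factorisation agree on faces up to the face-equivalence $f_0$ and the branched immersion $f_1$, both of which respect the component-to-component assignment $S_Y \to S_X$.

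Having made that identification, $\deg(v(Y)) = \sum_{x \in V_{S_Y}} 1/l_X(C_x)$, where $C_x$ is the component of $S_X$ containing $f(x)$. Now I would group this sum according to which component $D$ of $S_Y$ the vertex $x$ lies in: each such $D$ is a cycle mapping to a single component $C = C_D$ of $S_X$ with degree $\deg_D(f)$, and a cycle of degree $d$ over a cycle with $l_X(C)$ vertices has exactly $d \cdot l_X(C)$ vertices. Hence $\sum_{x \in V_D} 1/l_X(C_x) = (\deg_D(f)\cdot l_X(C_D)) \cdot (1/l_X(C_D)) = \deg_D(f)$, and summing over the components $D \in \pi_0(S_Y)$ gives $\sum_D \deg_D(f) = \deg(f)$ by Definition \ref{def: Degree and immersions}. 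This completes the proof. The only genuinely delicate point — and the step I'd expect to be the main obstacle to write carefully — is the bookkeeping in the middle paragraph: correctly matching the \emph{multiplicity} with which a vertex of $S_Y$ over $u$ appears among the vertices of $S_{N_Y(u)}$ (the ``disjoint union is important'' caveat of Definition \ref{def: Regular neighbourhood of a vertex}), and checking this matches up cleanly when we pass through the face immersion factorisation $Y \to Z \to X$ rather than working with $f$ directly. Everything else is a routine unwinding of the definitions.
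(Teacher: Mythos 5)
Your argument is correct and is essentially the paper's own proof run in the opposite direction: both reduce to the identity $\sum_{x\in V_{S_Y}} 1/l_X(C_x)$, which the paper regroups by components of $S_Y$ using $l_Y(C)=\deg_C(f)\,l_X(f(C))$ on one side and by vertex pieces on the other, exactly as you do. Your extra care over the identification $\coprod_{u\in V_Y} V_{S_{N_Y(u)}}\cong V_{S_Y}$ is sound (and in fact easier than you fear, since the fiber-product construction duplicates only edges of the asterisk, never vertices of $S_Y$), so nothing further is needed.
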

\begin{proof}
For any component $C$ of $S_Y$, $l_Y(C)=\deg_C(f)l_X(f(C))$. Therefore,
\begin{eqnarray*}
\deg(f)&=&\sum_{[C]\in\pi_0(S_Y)} \deg_C(f)\\
&=& \sum_{[C]\in\pi_0(S_Y)} \frac{l_Y(C)}{l_X(f(C))}\\
&=&\sum_{[C]\in\pi_0(S_Y)}\sum_{x\in V_C} \frac{1}{l_X(f(C))}\\
&=&\sum_{x\in V_{S_Y}} \frac{1}{l_X(C_x)}\\
&=&\sum_{P\in\calP}\sum_{P_Y(y)=P}\deg(P)\\
&=&\deg(v(Y))
\end{eqnarray*}
as required.
\end{proof}

A similar observation applies to the total curvature.

\begin{definition}\label{def: Total curvature as a linear function}
For a vertex-piece $P\in\cal P$, set
\[
\tau(P):=\deg(P)+\#V_{Y(P)}-\#E_{Y(P)}/2
\]
and extend this to a linear function $\tau:\RR[\calP]\to \RR$. 
\end{definition}

Again, this extends the total curvature to a linear function on $C(\RR)$.

\begin{lemma}\label{lem: Total curvature is linear}
If 
\[
Y\to Z\to X
\]
is a face immersion for a face-essential map $f:Y\to X$, then $\tau(v(Y))=\tau(Y)$.
\end{lemma}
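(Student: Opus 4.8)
The plan is to expand both sides of the claimed identity into sums indexed by the vertices of $Z$ and their preimages in $Y$, and then match terms. Since $v(Y)=\sum_{y\in V_Z}P_Y(y)$ and $\tau$ was defined as a linear function on $\RR[\calP]$, we have $\tau(v(Y))=\sum_{y\in V_Z}\tau(P_Y(y))$, and by Definition \ref{def: Total curvature as a linear function} this equals $\sum_{y\in V_Z}\bigl(\deg(P_Y(y))+\#V_{Y(P_Y(y))}-\#E_{Y(P_Y(y))}/2\bigr)$. The first summand is handled immediately by Lemma \ref{lem: Degree is linear}: $\sum_{y\in V_Z}\deg(P_Y(y))=\deg(v(Y))=\deg(f)$. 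So it remains to show $\sum_{y\in V_Z}\bigl(\#V_{Y(P_Y(y))}-\#E_{Y(P_Y(y))}/2\bigr)=\chi(G_Y)=\#V_Y-\#E_Y$, where $G_Y$ is the $1$-skeleton of $Y$.

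For this I would unwind the induced vertex pieces. By Definition \ref{def: Induced pieces}, $Y(P_Y(y))=\coprod_{f_0(u)=y}N_Y(u)$, and by Definition \ref{def: Regular neighbourhood of a vertex} each asterisk $N_Y(u)$ has exactly one vertex, namely $u$, and edge set $\iota^{-1}(u)\sqcup\tau^{-1}(u)$. Hence $\sum_{y\in V_Z}\#V_{Y(P_Y(y))}=\sum_{y\in V_Z}\#\{u\in V_Y:f_0(u)=y\}=\#V_Y$, since the fibres of $f_0\colon V_Y\to V_Z$ partition $V_Y$. Similarly, $\sum_{y\in V_Z}\#E_{Y(P_Y(y))}=\sum_{u\in V_Y}\bigl(\#\iota^{-1}(u)+\#\tau^{-1}(u)\bigr)=\#E_Y^\iota+\#E_Y^\tau$. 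Because $Y$ is a complex, $G_Y$ is a graph, so its boundary is empty and therefore $E_Y^\iota=E_Y^\tau=E_Y$; thus this sum equals $2\#E_Y$. Combining, $\sum_{y\in V_Z}\bigl(\#V_{Y(P_Y(y))}-\#E_{Y(P_Y(y))}/2\bigr)=\#V_Y-\#E_Y=\chi(G_Y)$, and so $\tau(v(Y))=\deg(f)+\chi(G_Y)=\tau(Y)$, as required.

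The argument is essentially pure bookkeeping, so there is no real obstacle; the one point that warrants care is the factor of $1/2$ in the definition of $\tau(P)$, i.e. the claim that each edge of $G_Y$ is counted exactly twice across the regular neighbourhoods of the vertices of $Y$. This is precisely what the disjoint union $\iota^{-1}(u)\sqcup\tau^{-1}(u)$ in Definition \ref{def: Regular neighbourhood of a vertex} guarantees: a non-loop edge $e$ contributes one edge to $N_Y(\iota(e))$ (via $\iota^{-1}$) and one to $N_Y(\tau(e))$ (via $\tau^{-1}$), while a loop at $u$ contributes two edges to $N_Y(u)$, and in both cases the total is $2$. It is also here that we genuinely use that $Y$ is a complex rather than an arbitrary pre-complex, so that $G_Y$ has empty boundary; for a general pre-complex one would instead pick up a correction term coming from $\partial G_Y$.
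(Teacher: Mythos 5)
Your proof is correct and follows essentially the same route as the paper's: both reduce to showing $\sum_{y\in V_Z}\bigl(\#V_{Y(P_Y(y))}-\#E_{Y(P_Y(y))}/2\bigr)=\chi(G_Y)$ by observing that the vertex sets of the induced pieces partition $V_Y$ while each edge of $G_Y$ is counted exactly twice across the regular neighbourhoods. Your extra remark about the disjoint union $\iota^{-1}(u)\sqcup\tau^{-1}(u)$ and the use of $E_Y^\iota=E_Y^\tau=E_Y$ is a slightly more careful justification of the factor of $2$ than the paper's one-line assertion, but it is the same argument.
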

\begin{proof}
By definition, $\tau(Y)=\deg(f)+\chi(G_Y)$, so it suffices to prove that 
\[
\chi(G_Y)=\sum_{v\in V_Z}(\#V_{Y(P(v))}-\#E_{Y(P(v))}/2)\,.
\]
Since $V_{Y(P(v))}$ bijects with the set of vertices of $Y$ that map to $v\in Z$,
\[
\sum_{v\in V_Z}\#V_{Y(P(v))}=\# V_Y\,.
\]
Likewise, $E_{Y(P(v))}$ bijects with the set of edges of $Y$ incident at a vertex mapping to $v$, so 
\[
\sum_{v\in V_Z}\#E_{Y(P(v))}=2\# E_Y\,.
\]
Therefore, 
\[
\sum_{v\in V_Z}(\#V_{Y(P(v))}-\#E_{Y(P(v))}/2)=\# V_Y-\#E_Y=\chi(G_Y)
\]
as required.
\end{proof}

The following rationality theorem now follows by standard optimisation techniques.

\begin{theorem}\label{thm: Rationality theorem}
Let $X$ be a finite 2-complex. There is a finite, visibly irreducible complex $Y_{\max}$ and a face-essential map $f_{\max}:Y_{\max} \to X$ such that
\[
\frac{\tau(Y_{\max})}{\deg(f_{\max})}\geq\frac{\tau(Y)}{\deg(f)}
\]
for all face-essential maps $f:Y\to X$ with $Y$ finite and visibly irreducible.
\end{theorem}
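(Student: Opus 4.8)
The plan is to apply the theory of rational linear programming to the cone $C(\RR) = \ker\partial \cap \RR_{\geq 0}[\calP]$, which is a rational polyhedral cone in the finite-dimensional space $\RR[\calP]$ (finite-dimensional by Lemma \ref{lem: Finiteness of the set of pieces}). The objective is to maximise the ratio of the two linear functionals $\tau$ and $\deg$ over the integral points of $C(\RR)$ that have positive $\deg$-value, and then to realise the maximiser geometrically via Proposition \ref{prop: Surjectivity of map to weight vectors}.

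First I would record that $\deg$ is strictly positive on $C(\RR) \setminus \{0\}$: every nonzero weight vector in $C(\RR)$ has a positive coordinate $v_P$ on some piece $P$, and since $Y(P)$ is visibly irreducible it is not a point, so $S_{Y(P)}$ is nonempty and $\deg(P) > 0$; hence $\deg$ does not vanish anywhere on the ratio's domain and the function $\tau/\deg$ is well-defined and continuous on the compact slice $\{v \in C(\RR) : \deg(v) = 1\}$. Maximising $\tau/\deg$ over $C(\RR)$ is therefore equivalent to maximising the linear functional $\tau$ over this compact rational polytope $K := C(\RR) \cap \{\deg = 1\}$, which is a slice of a rational polyhedral cone by a rational hyperplane, hence itself a rational polytope. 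By the fundamental theorem of linear programming, the maximum of $\tau$ over $K$ is attained at a vertex $v_0$ of $K$, and since $K$ is defined over $\QQ$, the vertex $v_0$ is a rational point. Clearing denominators, there is a positive integer $N$ with $v_{\max} := N v_0 \in C(\ZZ)$, and $\tau(v_{\max})/\deg(v_{\max}) = \tau(v_0)/\deg(v_0) = \max_K \tau$.

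Next I would invoke Proposition \ref{prop: Surjectivity of map to weight vectors} to obtain, from $v_{\max} \in C(\ZZ)$, a finite visibly irreducible complex $Y_{\max}$, a complex $Z$, and a face immersion $Y_{\max} \to Z \to X$ with $v(Y_{\max}) = v_{\max}$; composing gives a face-essential map $f_{\max} : Y_{\max} \to X$ (it is face-essential because it admits a face immersion, by Lemma \ref{lem: Face-essential maps and face immersions}). By Lemmas \ref{lem: Degree is linear} and \ref{lem: Total curvature is linear}, $\deg(f_{\max}) = \deg(v_{\max})$ and $\tau(Y_{\max}) = \tau(v_{\max})$, so $\tau(Y_{\max})/\deg(f_{\max}) = \max_K \tau$. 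Finally, for the optimality claim: given an arbitrary face-essential $f : Y \to X$ with $Y$ finite and visibly irreducible, Lemma \ref{lem: Face-essential maps and face immersions} furnishes a face immersion $Y \to Z' \to X$, so the same two lemmas give $\tau(Y) = \tau(v(Y))$, $\deg(f) = \deg(v(Y))$, and by the forward direction of Proposition \ref{prop: Surjectivity of map to weight vectors} we have $v(Y) \in C(\ZZ) \subseteq C(\RR)$ with $\deg(v(Y)) > 0$; hence $\tau(Y)/\deg(f) = \tau(v(Y))/\deg(v(Y)) \leq \max_K \tau = \tau(Y_{\max})/\deg(f_{\max})$.

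The only genuinely delicate points are bookkeeping rather than conceptual. One must check that $Z$ can be arranged to be a genuine complex (not merely a pre-complex) in Proposition \ref{prop: Surjectivity of map to weight vectors} — which is exactly what that proposition's proof via the gluing construction provides — and one must be slightly careful that the linear functional $\deg$ really is bounded below by a positive constant on $K$ so that division is safe; this follows from the strict positivity observation above together with compactness of $K$. I expect the main obstacle, such as it is, to be verifying that the slice $K$ is nonempty and compact: nonemptiness needs at least one visibly irreducible complex mapping to $X$ (if none exists the statement is vacuous, or one takes $Y_{\max}$ empty), and compactness needs that $\ker\partial \cap \RR_{\geq 0}[\calP]$ contains no line, which again is immediate from $\deg > 0$ on $C(\RR)\setminus\{0\}$ since a line through the origin in $\RR_{\geq 0}[\calP]$ would have to be the zero subspace.
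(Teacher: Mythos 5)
Your proposal is correct and follows essentially the same route as the paper: slice the rational cone $C(\RR)$ by $\{\deg = 1\}$ to get a compact rational polytope, maximise $\tau$ at a rational vertex, clear denominators, and realise the resulting integral weight vector by Proposition \ref{prop: Surjectivity of map to weight vectors}, with Lemma \ref{lem: Face-essential maps and face immersions} supplying the passage between face-essential maps and face immersions in both directions. The extra care you take over strict positivity of $\deg$ on $C(\RR)\setminus\{0\}$ is a worthwhile explicit check that the paper compresses into one sentence.
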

\begin{proof}
Consider the rational polyhedron
\[
\Delta:=C(\RR)\cap \{ \deg(v)=1\}\,.
\]
Every coordinate vector in $\RR[\calP]$ has positive degree, and so
\[
\RR_{\geq 0}[\calP]\cap \{ \deg(v)=1\}
\]
is a simplex. Hence, $\Delta$ is compact.

By the simplex algorithm, the linear map $\tau$ is maximised at some vertex $v_{\max}$ of $\Delta$. Since $\Delta$ is rationally defined, $v_{\max}\in C(\QQ)$, and so has some multiple $u_{\max}\in C(\ZZ)$. By Proposition \ref{prop: Surjectivity of map to weight vectors}, $u_{\max}=v(Y_{\max})$ for some face immersion
\[
Y_{\max}\to Z_{\max}\to X\,,
\]
with {$Y_{\max}$} finite and visibly irreducible.  In particular, by Lemma \ref{lem: Face-essential maps and face immersions}, the morphism $f_{\max}:Y_{\max}\to X$ is face-essential.

For any face-essential map $f:Y\to X$, the factorisation through the folded representative defines a face immersion
\[
Y\to\bar{Y}\to X
\]
by Lemma \ref{lem: Face-essential maps and face immersions}, so $v(Y)/\deg(f)\in \Delta$ and
\[
\frac{\tau(Y_{\max})}{\deg(f_{\max})}=\tau(v_{\max})\geq \frac{\tau(Y)}{\deg(f)}
\]
as required.
\end{proof}

Combined with Corollary \ref{cor: Precise negative immersions statement for face-essential maps}, we can complete the proof of Theorem \ref{thm: Uniform negative immersions}.

\begin{proof}[Proof of Theorem \ref{thm: Uniform negative immersions}]
Let $f_{\max}:Y_{\max}\to X$ be the face-essential map provided by Theorem \ref{thm: Rationality theorem}, and set
\[
\epsilon:=-\frac{\tau(Y_{\max})}{\deg(f_{\max})}\,.
\]
Since $X$ has negative immersions and $Y_{\max}$ is visibly irreducible, Corollary \ref{cor: Precise negative immersions statement for face-essential maps} applies to show that $\tau(Y_{\max})<0$, whence $\epsilon>0$ as required.  

Suppose now that $Y_0$ is a finite, irreducible complex and that $f_0:Y_0\to X$ is an immersion; in particular, $\tau(Y_0)=\chi(Y_0)$ and $\deg(f)$ is the number of faces of  $Y_0$. By Lemma \ref{lem: Unfolding all the way}, $Y_0$ unfolds to a finite, visibly irreducible 2-complex $Y\to Y_0$,  and the composition
\[
f:Y\to Y_0\stackrel{f_0}{\to} X
\]
is face-essential. Therefore, 
\begin{eqnarray*}
-\epsilon&\geq& \frac{\tau(Y)}{\deg(f)}\\
&=&\frac{\tau(Y_0)}{\deg(f_0)}\\
&=& \frac{\chi(Y_0)}{{\#\{2\mathrm{-cells\,of}\,Y\}}}
\end{eqnarray*}
as required.
\end{proof}

\section{The case with torsion}\label{sec: The case with torsion}

We shall see that Theorem \ref{thm: Uniform negative immersions} leads to strong constraints on the subgroup structure of one-relator groups with negative immersions. As stated, it only applies to one-relator groups without torsion. Baumslag's coherence conjecture  was proved in the case with torsion independently by the authors \cite{louder_one-relator_2020} and Wise \cite{wise_coherence_2020}.  The purpose of this section is to note that one-relator groups with torsion also fit into the framework of this paper.

The presentation complex of a one-relator group with torsion also has uniform negative immersions, after passing to a finite-sheeted covering space\footnote{Alternatively, one might work with a presentation orbi-complex, which itself has uniform negative immersions for maps from complexes.}. 

\begin{theorem}\label{thm: Uniform negative immersions in the torsion case}
 Consider a one-relator group with torsion $G=F/\ncl{u^n}$, where $n>1$. There is a torsion-free subgroup $G_0$ of finite index in $G$ that is the fundamental group of a finite 2-complex $X_0$ with uniform negative immersions.
 \end{theorem}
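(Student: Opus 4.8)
The plan is to reduce the torsion case to the torsion-free case already handled by Theorem~\ref{thm: Uniform negative immersions}. First I would invoke the classical fact, due to Fischer--Karrass--Solitar, that a one-relator group with torsion $G = F/\ncl{u^n}$ (with $n>1$ and $u$ not a proper power) contains a torsion-free subgroup $G_0$ of finite index; indeed, the finite cyclic subgroup generated by the image of $u$ is a maximal finite subgroup, every finite subgroup is conjugate into it, and $G$ is virtually torsion-free. Fix such a $G_0$, realised as the fundamental group of a finite covering space $\real{X_0} \to \real{X}$ of the presentation complex $\real{X}$ of $G$; we may take $X_0$ to be a finite complex in the sense of this paper, and $\pi_1(X_0) = G_0$ is torsion-free.

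The heart of the argument is to show $X_0$ has uniform negative immersions, and for this I would run exactly the machinery of Section~\ref{sec: A linear system} applied to $X_0$ in place of $X$. Theorem~\ref{thm: Rationality theorem} applies verbatim to the finite $2$-complex $X_0$: it produces a finite, visibly irreducible complex $Y_{\max}$ and a face-essential map $f_{\max} : Y_{\max} \to X_0$ realising the supremum of $\tau(Y)/\deg(f)$ over all face-essential maps from finite visibly irreducible complexes. Set $\epsilon := -\tau(Y_{\max})/\deg(f_{\max})$. To conclude $\epsilon > 0$ as in the proof of Theorem~\ref{thm: Uniform negative immersions}, I need an analogue of Corollary~\ref{cor: Precise negative immersions statement for face-essential maps} for $X_0$, i.e.\ that every face-essential map from a finite complex to $X_0$ has either $\tau < 0$ or reducible domain. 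Since $X_0$ covers $X$, any face-essential map $Y \to X_0$ composes with $\real{X_0}\to\real{X}$ to a map $Y\to X$, and after folding this is a branched immersion to $X$; so it suffices to know that the analogues of Theorem~\ref{thm: NPI for one-relator groups} and Theorem~\ref{thm: Precise negative immersions statement for branched immersions} hold for branched immersions into the presentation complex of a one-relator group \emph{with torsion and negative immersions} (equivalently, $\pi(u^n)>2$, which holds iff $\pi(u)>2$ by the covering-space behaviour of primitivity rank, hence iff $G$ is $2$-free). These are precisely the branched-immersion strengthenings of the non-positive and negative immersions theorems; in the torsion case the relevant $\tau$-bounds were established (in the guise of Wise's negative-immersions property, which is stronger) in \cite{louder_one-relator_2020} and \cite{wise_coherence_2020}, and lifting along the covering $\real{X_0}\to\real{X}$ transfers the bound to branched immersions into $X$ factoring through $X_0$. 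With this in hand, Corollary~\ref{cor: Precise negative immersions statement for face-essential maps}'s proof goes through verbatim for $X_0$, giving $\tau(Y_{\max})<0$ and hence $\epsilon>0$.

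The final step is the unfolding argument of the proof of Theorem~\ref{thm: Uniform negative immersions}, which is insensitive to torsion: given a finite irreducible complex $Y_0$ and an immersion $f_0 : Y_0 \immerses X_0$, Lemma~\ref{lem: Unfolding all the way} unfolds $Y_0$ to a visibly irreducible $Y \to Y_0$, the composite $Y \to X_0$ is face-essential, and $\tau(Y)/\deg(f) = \tau(Y_0)/\deg(f_0) = \chi(Y_0)/\#\{2\text{-cells of }Y_0\} \leq -\epsilon$, as required by Definition~\ref{def: Uniform negative immersions}. The main obstacle is the second paragraph: not the abstract reduction to a torsion-free finite-index subgroup, which is standard, but verifying that the branched-immersion form of the negative-immersions inequality survives passage to the covering $X_0$ and that the primitivity-rank hypothesis $\pi(w)>2$ for the torsion-free relator defining $G_0$ is in force --- this is where one must carefully cite and adapt \cite{louder-wilton2, louder_one-relator_2020, wise_coherence_2020} rather than merely quote the statements as written.
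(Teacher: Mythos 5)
There is a genuine gap here, and it stems from a misreading of the statement. Your argument routes everything through the negative-immersions machinery: you apply Theorem \ref{thm: Rationality theorem} to $X_0$ and then need an analogue of Corollary \ref{cor: Precise negative immersions statement for face-essential maps} for $X_0$, and to obtain that you impose the hypothesis that $G$ has negative immersions, ``equivalently $\pi(u^n)>2$''. But Theorem \ref{thm: Uniform negative immersions in the torsion case} carries no such hypothesis: it asserts uniform negative immersions for a complex associated to \emph{every} one-relator group with torsion, including two-generator examples such as $\langle a,b\mid [a,b]^n\rangle$, for which no primitivity-rank condition of this kind holds and the group is not $2$-free. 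For those groups there is no negative-immersions input to feed into your second paragraph, so the argument cannot be completed as written. Even where the extra hypothesis does hold, the claimed transfer of the branched-immersion $\tau$-bound along the covering is asserted rather than proved; and note that $X_0$ is not in fact a covering space of $X$ in the category of $2$-complexes. In the construction of \cite[Theorem 2.2]{louder_one-relator_2020} one passes to a finite-sheeted cover \emph{and then collapses duplicate faces} (the $n$ lifts of the $2$-cell attached along a common loop), and this collapse matters for the immersion-theoretic statements.

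The intended proof is much more direct and does not use Section \ref{sec: A linear system} at all. In the torsion case the uniform bound comes with an explicit constant from the $n$-fold redundancy of the relator: \cite[Corollary 3.1]{louder_one-relator_2020} states that every immersion $Y\immerses X_0$ from a finite irreducible complex satisfies $\chi(Y)+(n-1){\#\{2\mathrm{-cells\,of}\,Y\}}\leq 0$, which rearranges to $\chi(Y)/{\#\{2\mathrm{-cells\,of}\,Y\}}\leq 1-n<0$. Thus $X_0$ has uniform negative immersions with $\epsilon=n-1$, with no appeal to Theorem \ref{thm: Rationality theorem} and no hypothesis on $\pi(u^n)$.
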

 \begin{proof}
By \cite[Theorem 2.2]{louder_one-relator_2020}, there is a torsion-free subgroup $G_0$ of $G$ that is the fundamental group of a complex $X_0$.  As in the proof of \cite[Theorem 2.2]{louder_one-relator_2020}, $X_0$ can be obtained from the presentation complex $X$ of $G$ by passing to a finite-sheeted cover and collapsing duplicate faces.  By \cite[Corollary 3.1]{louder_one-relator_2020}, any immersion $Y\immerses X_0$ satisfies
\[
\chi(Y)+(n-1){\#\{2\mathrm{-cells\,of}\,Y\}}\leq 0
\]
as long as $Y$ is finite and irreducible. Rearranging, we obtain that
\[
\frac{\chi(Y)}{\#\{2\mathrm{-cells\,of}\,Y\}}\leq 1-n<0\,,
\]
so $X_0$ has uniform negative immersions as claimed.
 \end{proof}

\section{Uniform negative immersions and subgroups}\label{sec: Uniform negative immersions and subgroups}

In this section we complete the proof of Theorems \ref{thm: Coherence} and \ref{thm: Other subgroup constraints}, by proving these properties for the fundamental group of any complex $X$ with uniform negative immersions. Our strategy is similar to the proof of coherence for one-relator groups with torsion  \cite{louder_one-relator_2020,wise_coherence_2020}. There is, however, a small extra technicality, because we need to deal with irreducible 2-complexes, rather than {non-visibly-reducible} 2-complexes; this extra technicality is resolved by Lemmas \ref{lem: Irreducible core} and \ref{lem: Universal property of irreducible cores}.

Let $G$ be a one-relator group with negative immersions, let $X$ be its standard presentation complex, and let $H$ be a finitely generated subgroup of $G$.  The next lemma is the key consequence of uniform negative immersions that we will make use of.  Two immersions of 2-complexes $Y_1\to X$ and $Y_2\to X$ are \emph{isomorphic} if there is an isomorphism of 2-complexes $Y_1\to Y_2$ such that the obvious diagram commutes.

\begin{lemma}\label{lem: Bound on immersions}
Let $X$ be a finite 2-complex with uniform negative immersions. For any integer $r$, there are only finitely many isomorphism classes of immersions $Y\to X$ such that $Y$ is finite, connected, irreducible, and $b_1(Y)\leq r$ (where $b_1(Y)$ denotes the first Betti number of $Y$).
\end{lemma}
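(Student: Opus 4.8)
The plan is to bound the two relevant combinatorial parameters of such a $Y$ — the number of $2$-cells and the size of the $1$-skeleton — in terms of $r$ and the fixed complex $X$, and then to invoke the finiteness of $X$ to conclude that only finitely many isomorphism types arise. First I would fix $\epsilon>0$ as in the definition of uniform negative immersions (Definition \ref{def: Uniform negative immersions}). For an immersion $f\colon Y\to X$ with $Y$ finite, connected and irreducible, we have $\chi(Y) = 1 - b_1(Y)$ (here I use that $Y$ is connected, so $b_0(Y)=1$, and that $Y$ is homotopy equivalent to a $2$-complex with $\chi(Y) = b_0 - b_1 + b_2$; in fact one only needs $\chi(Y) \ge 1 - b_1(Y)$, which always holds). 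Since $Y$ is irreducible, alternative (ii) of Definition \ref{def: Uniform negative immersions} fails, so alternative (i) must hold:
\[
\frac{\chi(Y)}{\#\{2\text{-cells of } Y\}} \le -\epsilon\,.
\]
Rearranging, $\#\{2\text{-cells of } Y\} \le -\chi(Y)/\epsilon \le (b_1(Y)-1)/\epsilon \le (r-1)/\epsilon$. This bounds the number of faces of $Y$ by a constant $N = N(X, r)$ depending only on $X$ and $r$.

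Next I would bound the $1$-skeleton. Since $f\colon Y\to X$ is an immersion, the restriction to each face $C$ of $Y$ is degree-one onto some face of $X$, so the attaching length of $C$ equals the length of the corresponding attaching circle in $X$; in particular every face of $Y$ has boundary length at most $L := \max_{C'} l_X(C')$, the maximal length of an attaching circle of $X$. Hence the total number of edge-incidences-with-faces in $Y$ is at most $NL$. Because $Y$ is irreducible, and in particular visibly reducible option (iii) of Definition \ref{def: Visibly reducible and visibly irreducible} fails (no free faces) — more precisely, after passing to a visibly irreducible essential unfolding, which does not change $b_1$ or the number of faces — every edge of the $1$-skeleton of (the visibly irreducible representative of) $Y$ lies in the boundary of at least two faces (counted with multiplicity), and every vertex has valence at least $2$. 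Thus $\#E_Y \le NL/2$ and, since the $1$-skeleton has minimum valence $\ge 2$, also $\#V_Y \le \#E_Y \le NL/2$. Combined with the immersion hypothesis, which forces the vertex-labels and edge-labels of $Y$ to come from the finite vertex- and edge-sets of $X$, there are only finitely many combinatorial types of labelled graph that can serve as $G_Y$, and for each, only finitely many ways to attach at most $N$ faces of bounded length mapping to faces of $X$. Hence only finitely many isomorphism classes of immersions $Y\to X$.

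The main obstacle, and the place where the previously-developed machinery of the paper is really needed, is the subtlety in the definition of \emph{irreducible} versus \emph{visibly irreducible}: irreducibility only says that \emph{some} essential unfolding is visibly irreducible, and unfolding increases the size of the $1$-skeleton, so a priori the $1$-skeleton of $Y$ itself need not satisfy the minimum-valence and no-free-face conditions, nor is it clear that there are finitely many $Y$ once we also remember that two complexes with a common unfolding are identified. I would handle this by first replacing the direct count with a count at the level of the visibly irreducible unfolding: by Lemma \ref{lem: Unfolding all the way} each $Y$ unfolds to a visibly irreducible $Y_1$ with the same number of faces and the same $b_1$ (the unfolding map is an essential equivalence, hence a homotopy equivalence, so $\chi$ and $b_1$ are preserved), and $Y$ is recovered from $Y_1$ by a bounded number of folds; so it suffices to bound $Y_1$, for which the minimum-valence and no-free-face arguments above now apply verbatim. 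Finally, since folding a given finite complex can be done in only finitely many ways, finitely many $Y_1$ yield finitely many $Y$, completing the argument.
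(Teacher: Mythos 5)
Your proposal is correct and follows essentially the same route as the paper: bound the number of $2$-cells by $(r-1)/\epsilon$ using the uniform negative immersions inequality together with $\chi(Y)\geq 1-b_1(Y)$, then bound the $1$-skeleton because irreducibility forces every vertex and edge to meet a face while the immersion condition bounds the boundary lengths of the faces. The paper states the latter step more tersely (``every vertex or edge of $Y$ is incident at a face''), whereas you route the argument explicitly through the visibly irreducible unfolding and back; that extra care is harmless and arguably makes the irreducible-versus-visibly-irreducible point cleaner.
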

\begin{proof}
Consider such an immersion $Y\immerses X$. Rearranging the defining inequality of uniform negative immersions, we have that
\[
\#\{2\mathrm{-cells\,of}\,Y\}\leq \frac{-\chi(Y)}{\epsilon}\,.
\]
But 
\[
\chi(Y)\geq 1-b_1(Y)\geq 1-r
\]
since $Y$ is a 2-complex, so $\#\{2\mathrm{-cells\,of}\,Y\}\leq (r-1)/\epsilon$.  Thus, there is a uniform bound on the number of faces of $Y$. Because $Y$ is irreducible, every vertex or edge of $Y$ is incident at a face. Since $Y\immerses X$ is an immersion, there is a bound on the number of faces at which each vertex and edge of $Y$ are incident, so the bound on the number of faces implies bounds on the numbers of vertices and edges. Therefore, there are only finitely many possible combinatorial types for $Y$, and also for immersions $Y\immerses X$.
\end{proof}

Note that Lemma \ref{lem: Bound on immersions} fails for 2-complexes with non-positive immersions. For instance, the torus has infinitely many non-isomorphic finite-sheeted covering spaces, all with fundamental groups generated by $2$ elements.

Now consider a finitely generated subgroup $H$ of $G=\pi_1(X)$. By Grushko's theorem, we may assume that $H$ is freely indecomposable. The next lemma is very similar to \cite[Lemma~4.2]{louder_one-relator_2020}; however, the reader should beware that the definition of irreducible complex used here is more restrictive than the definition in \cite{louder_one-relator_2020}.

\begin{lemma}\label{lem: Irreducible folding sequence}
Let $X$ be a 2-complex, $G=\pi_1(X)$, and $H\leq G$ a finitely generated, non-cyclic, freely indecomposable subgroup. After replacing $H$ by a conjugate, there is a sequence of $\pi_1$-surjective immersions of compact, connected 2-complexes
\[
Y_0\immerses Y_1\immerses Y_2\immerses\cdots\immerses Y_i\immerses\cdots X
\]
with the following properties:
\begin{enumerate}[(i)]
\item each $Y_i$ is irreducible;
\item $H=\underrightarrow{\lim}\,\pi_1Y_i$.
\end{enumerate}
\end{lemma}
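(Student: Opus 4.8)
The plan is to build the sequence $Y_i$ by a standard "approximate $H$ by finitely generated pieces, then fold" argument, and then to upgrade each $Y_i$ to an irreducible complex using the irreducible core machinery of Section \ref{sec: Irreducible complexes}. First I would fix a finite generating set $h_1,\dots,h_k$ for $H$ and realise each $h_j$ by a loop in $X$; wedging these loops at a basepoint and mapping to $X$ gives a finite connected 2-complex $W_0$ (with a rose-like 1-skeleton and no faces, or with the faces of $X$ pulled back as needed) together with a $\pi_1$-surjective map $W_0\to X$ whose image of $\pi_1$ is $H$. More usefully, since $H=\pi_1(X)$-conjugacy lets us choose a basepoint, I would take the based covering space $\widetilde X_H\to X$ corresponding to $H\leq G$ and let $W_0\subseteq W_1\subseteq W_2\subseteq\cdots$ be an exhaustion of $\widetilde X_H$ by finite connected subcomplexes containing the basepoint, with $\bigcup_i W_i=\widetilde X_H$. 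Each inclusion $W_i\hookrightarrow W_{i+1}$ is an immersion, each $W_i\to X$ is an immersion, and $H=\pi_1(\widetilde X_H)=\underrightarrow{\lim}\,\pi_1 W_i$.

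The remaining work is to replace the $W_i$ by \emph{irreducible} complexes compatibly. Here is where I would invoke Lemma \ref{lem: Irreducible core} and Lemma \ref{lem: Universal property of irreducible cores}. Passing to a subsequence, I may assume each $\pi_1 W_i$ is non-trivial; applying Grushko to each $\pi_1 W_i$ and using that $H$ is freely indecomposable and non-cyclic, I can arrange (again after passing to a cofinal subsequence, and possibly after a preliminary free-factor truncation as in \cite[Lemma 4.2]{louder_one-relator_2020}) that each $\pi_1 W_i$ is itself neither freely decomposable nor free. Then each $W_i$ has a well-defined irreducible core $\widehat{W_i}\to W_i$, which is an immersion, is $\pi_1$-injective, and in fact the composition $\widehat{W_i}\to W_i\to X$ is a $\pi_1$-isomorphism onto $\pi_1 W_i$ by Remark \ref{rem: Remarks about irreducible cores}. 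Set $Y_i:=\widehat{W_i}$; this is irreducible (Definition \ref{def: Reducible and irreducible complexes}, Remark \ref{rem: Remarks about irreducible cores}), compact and connected, with a $\pi_1$-surjective immersion $Y_i\to X$ (surjective onto $\pi_1 W_i$, which maps into $G$; $\pi_1$-surjectivity onto $H$ holds in the limit). It remains to produce the maps $Y_i\immerses Y_{i+1}$: since $Y_i\to X$ is a branched immersion (indeed an immersion) from an irreducible complex that is non-trivial on $\pi_1$, and $Y_{i+1}=\widehat{W_{i+1}}\to X$ is an irreducible core of $W_{i+1}$, but we actually want to factor $Y_i\to W_{i+1}$ through $\widehat{W_{i+1}}$ — here I apply Lemma \ref{lem: Universal property of irreducible cores} to the branched immersion $Y_i\to W_{i+1}$ (which exists because $Y_i\to W_i\hookrightarrow W_{i+1}$, and $Y_i\to W_i$ is an immersion) to get a factorisation $Y_i\to \widehat{W_{i+1}}=Y_{i+1}\to W_{i+1}$. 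Folding this factored map (or observing it is already an immersion after folding) yields the desired immersion $Y_i\immerses Y_{i+1}$, $\pi_1$-surjective since $Y_i\to W_i$ and $W_i\hookrightarrow W_{i+1}$ induce the appropriate maps on $\pi_1$ and $\widehat{W_{i+1}}\to W_{i+1}$ is a $\pi_1$-isomorphism. Finally, $H=\underrightarrow{\lim}\,\pi_1 W_i=\underrightarrow{\lim}\,\pi_1 Y_i$ since $\pi_1 Y_i\cong\pi_1 W_i$ compatibly with the directed system.

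The main obstacle I anticipate is the bookkeeping around free decomposability: the subcomplexes $W_i$ in a naive exhaustion need not have freely indecomposable (nor non-free) fundamental group, so Lemma \ref{lem: Irreducible core} does not apply to them directly. The fix — and the technically delicate step — is to argue, as in the proof of \cite[Lemma 4.2]{louder_one-relator_2020}, that because $H$ itself is freely indecomposable, non-cyclic and finitely generated, one can choose the exhaustion (and pass to a cofinal subsequence) so that eventually $\pi_1 W_i$ is neither free nor freely decomposable, or more precisely so that the relevant Grushko/relative-Grushko free factor stabilises and one may truncate the $W_i$ to the corresponding core; this uses that free factors of $H$ eventually stop appearing as the $W_i$ exhaust $\widetilde X_H$. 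A secondary technical point is ensuring the maps $Y_i\to Y_{i+1}$ are genuine immersions (not merely branched immersions) and $\pi_1$-surjective; this follows from Lemma \ref{lem: Universal property of irreducible cores} together with the fact that folding a $\pi_1$-surjection between 2-complexes with 1-skeleta that are core graphs yields an immersion, but it should be checked carefully against the definitions in Section \ref{sec: Preliminaries}. Everything else — compactness, connectedness, irreducibility of the $Y_i$, and the colimit identification — is routine given the lemmas already established.
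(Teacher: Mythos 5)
There is a genuine gap, and it sits in the first half of your argument: the choice of the underlying sequence. The lemma requires the connecting maps $Y_i\immerses Y_{i+1}$ to be \emph{$\pi_1$-surjective}, and this is not a cosmetic condition --- it is exactly what is used in Theorem \ref{thm: Realising subgroups by immersions} to get the uniform bound $b_1(Y_i)\leq b_1(Y_0)$ that feeds into Lemma \ref{lem: Bound on immersions}. An exhaustion $W_0\subseteq W_1\subseteq\cdots$ of the cover $\widetilde X_H$ by finite subcomplexes does \emph{not} have this property: the inclusion $W_i\hookrightarrow W_{i+1}$ is essentially never surjective on $\pi_1$ (new loops of $\widetilde X_H$ keep appearing, and loops that are nullhomotopic in the cover need not yet bound in $W_{i+1}$), and the first Betti numbers of the $W_i$ will typically be unbounded. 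For the same reason your claim that, after passing to a cofinal subsequence, each $\pi_1 W_i$ can be arranged to be neither free nor freely decomposable is unjustified: finite subcomplexes of the cover will generically have free fundamental group of large rank, and Scott's lemma --- the tool that actually delivers eventual free indecomposability --- applies to a directed system with \emph{surjective} maps converging to the freely indecomposable group $H$, not to an exhaustion. The paper avoids all of this by simply quoting \cite[Lemma 4.2]{louder_one-relator_2020}, which constructs a sequence of genuinely $\pi_1$-surjective immersions $Z_i\to Z_{i+1}\to X$ with $H=\underrightarrow{\lim}\,\pi_1 Z_i$ (the wedge-of-generators-then-attach-discs-and-fold construction), and then invokes Scott's lemma (\cite[Lemma 2.2]{scott_finitely_1973}, \cite[Theorem 2.1]{swarup_delzant_2004}) to discard finitely many terms so that each $\pi_1(Z_i)$ is freely indecomposable and non-cyclic.

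The second half of your argument --- applying Lemma \ref{lem: Irreducible core} to replace each term by its irreducible core $\widehat{Z}_i$, and Lemma \ref{lem: Universal property of irreducible cores} to factor $\widehat{Z}_i\to Z_{i+1}$ through $\widehat{Z}_{i+1}$, with the basepoint adjustment accounting for the conjugation of $H$ --- is exactly what the paper does, and is fine once the base sequence has the right properties. So the fix is to replace your exhaustion with the cited $\pi_1$-surjective folding sequence (or to reconstruct it, rather than gesture at it), after which your core-taking step goes through verbatim.
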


Because of the stronger notion of irreducibility used in this paper, the proof of Lemma \ref{lem: Irreducible folding sequence} requires one extra ingredient: the irreducible core of Lemma \ref{lem: Irreducible core}. With this in hand, the proof of Lemma \ref{lem: Irreducible folding sequence} is very similar to the proof of \cite[Lemma~{4.3}]{louder_one-relator_2020}.

\begin{proof}[Proof of Lemma \ref{lem: Irreducible folding sequence}]
\cite[Lemma~{4.3}]{louder_one-relator_2020} (see also Remark \ref{rem: Immersions in orbicomplexes vs complexes}) provides a sequence of $\pi_1$-surjective immersions of $2$--complexes
\[
Z_i\to Z_{i+1}\stackrel{f_{i+1}}{\to} X
\]
such that $H=\underrightarrow{\lim}\,\pi_1Z_i$. It remains to improve this to make the terms irreducible.

Since $H$ is non-cyclic, no $Z_i$ has cyclic fundamental group.   By Scott's lemma (proved with extra hypotheses as \cite[Lemma 2.2]{scott_finitely_1973}, proved as \cite[Theorem 2.1]{swarup_delzant_2004}), since $H$ is freely indecomposable, after throwing away finitely many terms from the sequence we may assume that each $\pi_1(Z_i)$ is freely indecomposable.

Applying Lemmas \ref{lem: Irreducible core} and \ref{lem: Universal property of irreducible cores} inductively constructs a second sequence
\begin{center}
  \begin{tikzcd}
Y_0\arrow{r}\arrow{d} & Y_1\arrow{r}\arrow{d}& Y_2\arrow{r}\arrow{d}& \cdots\arrow{r} & Y_i \arrow{r}\arrow{d} &\cdots &\\
Z_0\arrow{r} &Z_1\arrow{r}& Z_2\arrow{r}& \cdots\arrow{r}& Z_i \arrow{r} &\cdots \arrow{r}& X
  \end{tikzcd}
\end{center}
where each $Y_i$ is the irreducible core $\widehat{Z}_i$ of $Z_i$. Now choose  a basepoint in $Y_0$, if necessary moving the basepoints of the $Z_i$ --- this may involve conjugating $H$. The result now follows.
\end{proof}

The next theorem, which is an easy consequence of Lemmas \ref{lem: Bound on immersions} and \ref{lem: Irreducible folding sequence}, will imply Theorem \ref{thm: Coherence}. Its proof is an elaboration of the arguments of \cite{louder_one-relator_2020} or \cite{wise_coherence_2020}.

\begin{theorem}\label{thm: Realising subgroups by immersions}
Let $X$ be a finite 2-complex with uniform negative immersions, and let $G=\pi_1(X)$. For any finitely generated, non-cyclic, freely indecomposable subgroup $H$ of $G$, there is an immersion from a finite, irreducible 2-complex $Y$ to $X$ such that $\pi_1(Y)\cong H$ and $Y\immerses X$ induces the inclusion of $H$ into $G$ up to conjugacy.
\end{theorem}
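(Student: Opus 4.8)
The plan is to feed the folding sequence from Lemma~\ref{lem: Irreducible folding sequence} into the finiteness statement of Lemma~\ref{lem: Bound on immersions}, and argue that the sequence must stabilise. First I would apply Lemma~\ref{lem: Irreducible folding sequence} to the subgroup $H$ (after possibly conjugating) to obtain a sequence of $\pi_1$-surjective immersions of compact, connected, irreducible $2$-complexes
\[
Y_0\immerses Y_1\immerses\cdots\immerses Y_i\immerses\cdots X
\]
with $H=\underrightarrow{\lim}\,\pi_1 Y_i$. The key point is that the maps $\pi_1(Y_i)\to\pi_1(Y_{i+1})$ are surjective, so the first Betti numbers $b_1(Y_i)$ are non-increasing; hence there is a uniform bound $b_1(Y_i)\leq b_1(Y_0)=:r$ for all $i$. (Since $H$ is finitely generated and the $\pi_1(Y_i)$ surject onto each other with colimit $H$, each $\pi_1(Y_i)$ is itself finitely generated, so $r<\infty$.)

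Now Lemma~\ref{lem: Bound on immersions} applies: since $X$ has uniform negative immersions and each $Y_i$ is finite, connected, irreducible with $b_1(Y_i)\leq r$, there are only finitely many isomorphism classes of immersions $Y_i\immerses X$ in the sequence. Therefore the sequence contains a repeated term: $Y_i\immerses X$ and $Y_j\immerses X$ are isomorphic for some $i<j$. Because all the bonding maps are $\pi_1$-surjective and compose to (a map isomorphic to) the identity on the repeated complex, the maps $\pi_1(Y_i)\to\pi_1(Y_j)$ are in fact isomorphisms; more carefully, for any $k\geq i$ the composition $\pi_1(Y_i)\to\pi_1(Y_k)\to\pi_1(Y_j)\cong\pi_1(Y_i)$ is the identity and each arrow is surjective, so every $\pi_1(Y_k)\to\pi_1(Y_{k+1})$ for $k\geq i$ is an isomorphism. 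Hence the colimit is reached at stage $i$: taking $Y:=Y_i$, the immersion $Y\immerses X$ satisfies $\pi_1(Y)\cong H$ and, by construction of the folding sequence, induces the inclusion $H\hookrightarrow G$ up to conjugacy. The complex $Y=Y_i$ is finite and irreducible, as required.

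The main obstacle is the bookkeeping needed to conclude that the $\pi_1$-maps genuinely become isomorphisms once a term repeats — one must be careful that the isomorphism $Y_i\cong Y_j$ is compatible with the maps to $X$ (which it is, by the definition of isomorphic immersions) and then run the surjectivity-plus-idempotence argument in the colimit. A secondary point to check is that finite generation of $H$ really does force $b_1(Y_0)<\infty$; this follows because $\pi_1(Y_0)$ surjects onto none other than a quotient compatible with the colimit, and in the folding-sequence construction of \cite{louder_one-relator_2020} the initial complex $Y_0$ is built from a finite generating set of $H$, so it is a finite complex with finite $b_1$ automatically. Everything else is a direct citation of the lemmas already established.
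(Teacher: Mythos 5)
Your overall strategy is exactly the paper's: feed the folding sequence of Lemma \ref{lem: Irreducible folding sequence} into the finiteness statement of Lemma \ref{lem: Bound on immersions} and conclude that the sequence stabilises. The first two steps (constructing the sequence, and the uniform bound on $b_1$ coming from $\pi_1$-surjectivity) are fine. The gap is in the stabilisation step. You assert that the bonding maps ``compose to (a map isomorphic to) the identity on the repeated complex'' and that the composition $\pi_1(Y_i)\to\pi_1(Y_j)\cong\pi_1(Y_i)$ ``is the identity''. Neither claim follows from $Y_i\immerses X$ and $Y_j\immerses X$ being isomorphic as immersions: that isomorphism only tells you that the composite $Y_i\to Y_j\cong Y_i$ is \emph{some} self-immersion of $Y_i$ commuting with the map to $X$, not the identity. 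Moreover, repetition of isomorphism type plus surjectivity alone cannot force the $\pi_1$-maps to become injective: a non-Hopfian group $K$ admits a surjection $K\onto K$ with non-trivial kernel, and the direct limit of the constant sequence $K\onto K\onto\cdots$ is then a proper quotient in the limit, even though every term is isomorphic to $K$. So a genuinely geometric input is needed here, and your parenthetical is doing all of the work without justification.

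The missing ingredient is the fact, used at this point in the paper's proof with a citation to \cite[Lemma 6.3]{wise_sectional_2004}, that an immersion from a compact connected complex to itself is an isomorphism. The pigeonhole applied to Lemma \ref{lem: Bound on immersions} actually gives \emph{infinitely} many terms isomorphic to a fixed $Y\immerses X$, not just two (this also repairs your quantifier slip: the composition $\pi_1(Y_i)\to\pi_1(Y_k)\to\pi_1(Y_j)$ is meaningless for $k>j$, so you need repeats beyond every index). Passing to that subsequence, each composite bonding map is a self-immersion of the compact complex $Y$, hence an isomorphism, hence every intermediate $\pi_1$-surjection is an isomorphism and the colimit $H$ is attained at a finite stage. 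Once this lemma is inserted, your argument coincides with the paper's proof.
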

\begin{proof}
After possibly conjugating $H$, Lemma \ref{lem: Irreducible folding sequence} provides a sequence of $\pi_1$-surjective immersions of compact, connected, irreducible complexes
\[
Y_0\immerses Y_1\immerses Y_2\immerses\cdots\immerses Y_i\immerses\cdots X
\]
such that $H=\underrightarrow{\lim}\,\pi_1Y_i$. Since the maps $Y_i\to Y_{i+1}$ are surjective on fundamental groups, there is a uniform bound
\[
b_1(Y_0)\geq b_1(Y_i)
\]
on the first Betti numbers. Therefore, by Lemma \ref{lem: Bound on immersions}, infinitely many of the immersions $Y_i\immerses X$ are isomorphic, so passing to a subsequence we may assume that they are all equal to a fixed immersion $Y\immerses X$.  By \cite[Lemma 6.3]{wise_sectional_2004}, each immersion $Y_i\immerses Y_{i+1}$ is an isomorphism. In particular, since $H=\underrightarrow{\lim}\,\pi_1(Y_i)$, every $Y\to X$ represents $H$, as required.
 \end{proof}

Coherence is an immediate consequence.

\begin{corollary}\label{cor: Uniform negative immersions implies coherence}
If $X$ is a finite 2-complex with uniformly negative immersions then $\pi_1(X)$ is coherent.
\end{corollary}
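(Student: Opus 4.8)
The plan is to derive coherence directly from Theorem \ref{thm: Realising subgroups by immersions}, after a routine reduction via Grushko's theorem. Let $G=\pi_1(X)$ and let $H\leq G$ be an arbitrary finitely generated subgroup; we must show $H$ is finitely presentable. First I would apply Grushko's theorem to write $H$ as a free product
\[
H= H_1 * \cdots * H_k * F
\]
where $F$ is a (possibly trivial) finitely generated free group and each $H_j$ is finitely generated, non-trivial, and freely indecomposable. Since a free product of finitely many finitely presentable groups is finitely presentable, and $F$ is finitely presentable, it suffices to show each $H_j$ is finitely presentable.

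Next I would dispose of the easy factors: if some $H_j$ is cyclic then it is either trivial or infinite cyclic, hence finitely presentable. So the remaining case is when $H_j$ is non-cyclic and freely indecomposable. Here Theorem \ref{thm: Realising subgroups by immersions} applies: since $X$ is a finite $2$-complex with uniform negative immersions, there is an immersion from a \emph{finite}, irreducible $2$-complex $Y$ to $X$ with $\pi_1(Y)\cong H_j$. A finite (combinatorial) $2$-complex has finitely presentable fundamental group — one may read off a finite presentation from a spanning tree of $Y^{(1)}$, with one generator per remaining edge and one relator per face — so $H_j$ is finitely presentable. Assembling the factors, $H$ is finitely presentable, and since $H$ was an arbitrary finitely generated subgroup, $G$ is coherent.

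There is essentially no obstacle here: all the real work has already been done in establishing Theorem \ref{thm: Realising subgroups by immersions} (which rests in turn on Lemmas \ref{lem: Bound on immersions} and \ref{lem: Irreducible folding sequence}, and ultimately on uniform negative immersions). The only point requiring a word of care is the Grushko reduction, namely the observation that finite presentability is preserved under forming free products and passes from the free factors back to $H$; this is standard. It is also worth remarking, as the text already does after Lemma \ref{lem: Bound on immersions}, that the argument genuinely uses the \emph{uniform} bound — for complexes with merely non-positive immersions the conclusion of Theorem \ref{thm: Realising subgroups by immersions} can fail, so the passage from negative to uniform negative immersions (Theorem \ref{thm: Uniform negative immersions}) is essential to this corollary.
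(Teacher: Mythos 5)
Your proof is correct and follows essentially the same route as the paper: Grushko decomposition into freely indecomposable factors, then Theorem \ref{thm: Realising subgroups by immersions} to realise each non-cyclic factor as the fundamental group of a finite 2-complex. You are in fact slightly more careful than the paper's own two-line proof in explicitly disposing of the cyclic factors, to which that theorem does not apply.
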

\begin{proof}
Let $H$ be a finitely generated subgroup of $G$.  By Grushko's theorem, $H$ can be written as a free product
\[
H=F*H_1\ldots * H_n
\]
where $F$ is free and each $H_i$ is freely indecomposable. Each factor $H_i$ is finitely presented by Theorem \ref{thm: Realising subgroups by immersions}, so the result follows.
\end{proof}

Theorem \ref{thm: Coherence} now follows quickly.

\begin{proof}[Proof of Theorem \ref{thm: Coherence}]
{The theorem} follows by combining Theorems \ref{thm: Negative immersions} and \ref{thm: Uniform negative immersions} with Corollary \ref{cor: Uniform negative immersions implies coherence}.
\end{proof}

Likewise, the conclusions of Theorem \ref{thm: Other subgroup constraints} also hold for all complexes with uniform negative immersions.

\begin{theorem}
\label{thm: Other subgroup constraints for UNI complexes}
 Let $X$ be a finite 2-complex with uniform negative immersions, and let $G=\pi_1(X)$.
  \begin{enumerate}[(i)]
   \item Every finitely generated {one-ended} subgroup $H$ of  $G$ is co-Hopfian, i.e.\ $H$ is not isomorphic to a proper subgroup of itself. In particular, $G$ itself is co-Hopfian.
   \item For any integer $r$, there are only finitely many conjugacy classes of {finitely generated one-ended} subgroups $H$ of $G$  such that the abelianisation of $H$ has rational rank at most  $r$.
    \item Every finitely generated non-cyclic subgroup $H$ of $G$ is large in the sense of Pride, i.e.\ there is a subgroup $H_0$ of finite index in $H$ that surjects a non-abelian free group.
\end{enumerate}
\end{theorem}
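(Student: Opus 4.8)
The plan is to deduce all three statements from Theorem~\ref{thm: Realising subgroups by immersions} together with the counting inequality built into Definition~\ref{def: Uniform negative immersions}. Items (i) and (ii) already concern a finitely generated, non-cyclic, freely indecomposable subgroup $H\le G$; for (iii), a general finitely generated $H$ either has, by Grushko's theorem, a non-cyclic freely indecomposable free factor $H_1$ (whence $H$ is large as soon as $H_1$ is, since $H\twoheadrightarrow H_1$), or else $H$ is a free product of cyclic and finite groups, which is large unless it is virtually cyclic. So in every essential case it suffices to work with a finitely generated, non-cyclic, freely indecomposable $H$, to which Theorem~\ref{thm: Realising subgroups by immersions} associates, after replacing $H$ by a conjugate, a finite, connected, irreducible $2$-complex $Y$ and a $\pi_1$-injective immersion $Y\immerses X$ inducing $H\hookrightarrow G$. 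Two features of $Y$ are used throughout: since $Y$ is irreducible it carries at least one face, so the inequality of Definition~\ref{def: Uniform negative immersions} forces $\chi(Y)<0$, hence $\chi(Y)\le -1$ and the number of $2$-cells of $Y$ is at most $-\chi(Y)/\epsilon\le (b_1(H)-1)/\epsilon$; and $b_1(Y)=b_1(H)$, the rational rank of the abelianisation of $H$.

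For item (iii): collapsing a spanning tree in the $1$-skeleton of $Y$ exhibits a presentation of $H=\pi_1(Y)$ of deficiency at least $1-\chi(Y)\ge 2$. By the theorem of Baumslag and Pride, a group possessing a finite-index subgroup that admits a presentation of deficiency at least two is large; applying this with the finite-index subgroup equal to $H$ itself shows $H$ is large, and the general case follows from the reduction above. For item (ii): if the abelianisation of $H$ has rational rank at most $r$, then the complex $Y$ realising a conjugate of $H$ has $b_1(Y)\le r$, and, conversely, isomorphic immersions $Y\immerses X$ realise conjugate subgroups of $G$; since Lemma~\ref{lem: Bound on immersions} bounds the number of isomorphism classes of immersions of finite, connected, irreducible complexes with $b_1\le r$ into $X$, there are only finitely many conjugacy classes of subgroups of the stated form.

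For item (i): suppose for contradiction that $H$ is non-cyclic and freely indecomposable and that $\phi\colon H\to H$ is injective with $\phi(H)\subsetneq H$. Then $H\supsetneq\phi(H)\supsetneq\phi^2(H)\supsetneq\cdots$ is a strictly descending chain, each $\phi^k(H)$ isomorphic to $H$ and so non-cyclic, freely indecomposable, with $b_1(\phi^k(H))=b_1(H)$. Realising a conjugate of each $\phi^k(H)$ by an immersion of a finite irreducible complex into $X$ and invoking the finiteness of Lemma~\ref{lem: Bound on immersions} exactly as in (ii), two of these immersions agree up to isomorphism, so there are $n<m$ with $\phi^n(H)$ and $\phi^m(H)$ conjugate in $G$; writing $L:=\phi^n(H)$, there is $g\in G$ with $gLg^{-1}=\phi^m(H)\subsetneq L$. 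If $[L:gLg^{-1}]$ is finite, say equal to $d$, then necessarily $d\ge 2$ and $[L:g^kLg^{-k}]=d^k$ for all $k$; the degree-$d^k$ covering $\widehat{Y}_k\to Y$ of the realising complex $Y$ of $L$ corresponding to $g^kLg^{-k}$ then has $\chi(\widehat{Y}_k)=d^k\chi(Y)$, while $b_1(\widehat{Y}_k)=b_1(g^kLg^{-k})=b_1(L)$ is independent of $k$, so from $\chi(\widehat{Y}_k)=1-b_1(\widehat{Y}_k)+b_2(\widehat{Y}_k)$ and $\chi(Y)\le -1$ we get $b_2(\widehat{Y}_k)<0$ for large $k$, which is absurd.

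The remaining case, where $gLg^{-1}$ has infinite index in $L$, is where I expect the genuine difficulty to lie. Here $M:=\langle L,g\rangle$ is a finitely generated ascending HNN extension of $L$ along $\ell\mapsto g\ell g^{-1}$, with $M/\ncl{L}\cong\ZZ$ and non-finitely-generated normal subgroup $\bigcup_{k\ge 0}g^{-k}Lg^{k}$; by coherence of $G$ (Corollary~\ref{cor: Uniform negative immersions implies coherence}) this $M$ is finitely presented, so the obstruction cannot be purely one of finite presentability. The contradiction should instead come from showing that this ascending HNN structure is incompatible with the subgroup structure forced by uniform negative immersions --- plausibly by combining the largeness of $M$ given by item (iii) (and the fact that a finite-index subgroup of $M$ inherits the same $(\text{non-finitely-generated})\rtimes\ZZ$ structure) with a closer analysis of the immersions realising the nested subgroups $g^{k}Lg^{-k}$ inside covers of $Y$, so as to violate the uniform bound on the number of $2$-cells. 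Once this case is handled, every such $H$ is co-Hopfian, and taking $H=G$ whenever $G$ is itself non-cyclic and freely indecomposable (as it is for the presentation complex of a one-relator group with negative immersions) yields the co-Hopf property for $G$.
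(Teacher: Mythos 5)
Your arguments for items (ii) and (iii) are correct and essentially the paper's own: (ii) is immediate from Theorem~\ref{thm: Realising subgroups by immersions} together with Lemma~\ref{lem: Bound on immersions}, and (iii) is the deficiency-$\geq 2$ computation plus Baumslag--Pride, with the Grushko reduction for general $H$. The problem is item (i), where you have correctly located, but not closed, a genuine gap. Your chain argument only produces iterates $\phi^n(H)$ and $\phi^m(H)$ that are \emph{conjugate in $G$}, and the resulting proper inclusion $gLg^{-1}\subsetneq L$ need not have finite index; your covering-space Euler characteristic argument handles the finite-index case, but the infinite-index case is left as speculation about ascending HNN extensions. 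As you observe yourself, coherence gives no contradiction there, and largeness of $\langle L,g\rangle$ does not obviously conflict with anything either, so this branch of the argument is not a proof.

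The missing idea is to apply Theorem~\ref{thm: Realising subgroups by immersions} \emph{relative to $Y_0$} rather than to $X$. Since $Y_0\immerses X$ is an immersion and immersions compose, $Y_0$ inherits uniform negative immersions (with the same $\epsilon$), so the subgroup $\phi(H)\leq\pi_1(Y_0)\cong H$ is realised, up to conjugacy, by an immersion $Y_1\immerses Y_0$ from a finite, connected, irreducible complex; iterating $\phi$ produces a tower
\[
\cdots\immerses Y_i\immerses\cdots\immerses Y_1\immerses Y_0\immerses X
\]
with $\pi_1(Y_i)\cong H$ for all $i$, hence with uniformly bounded $b_1$. Lemma~\ref{lem: Bound on immersions} then lets you pass to a subsequence on which the composite immersions $Y_i\immerses X$ are all isomorphic, and the connecting map between consecutive terms of that subsequence is an immersion of a compact complex to an isomorphic one, hence an isomorphism by \cite[Lemma 6.3]{wise_sectional_2004}. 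This makes some power of $\phi$, and therefore $\phi$ itself, surjective on $\pi_1$, with no case division on the index of $gLg^{-1}$ in $L$. Replacing your argument for (i) with this one completes the proof; the rest of your write-up stands.
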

\begin{proof}
Let $H$ be a non-cyclic, finitely generated subgroup of $G$.  

To prove item (i), assume that $H$ is freely indecomposable, so is realised up to conjugacy by an immersion $Y=Y_0\immerses X$, by Theorem \ref{thm: Realising subgroups by immersions}.  Given any injective homomorphism $H\to H$, applying Theorem \ref{thm: Realising subgroups by immersions} inductively gives rise to a sequence of immersions of finite, connected, irreducible complexes
\[
\ldots\immerses Y_i\immerses\ldots \immerses Y_1\immerses Y_0\immerses X\,,
\]
where $\pi_1(Y_i)\cong H$ for all $i$.  By Lemma \ref{lem: Bound on immersions}, there are finitely many isomorphism classes of complexes $Y_i$ so, passing to a subsequence, we may assume that $Y_i=Y$ for some fixed $Y$ and all $n$.  Since $Y$ is compact, every immersion $Y\immerses Y$ is an isomorphism \cite[Lemma 6.3]{wise_sectional_2004}, so the homomorphism $H\to H$ is also surjective, as required. 

Item (ii) is an immediate consequence of Theorem \ref{thm: Realising subgroups by immersions} and Lemma \ref{lem: Bound on immersions}.

Finally, to prove item (iii), start by assuming that $H$ is freely indecomposable.  By Theorem \ref{thm: Realising subgroups by immersions}, up to conjugacy, the inclusion of $H$ into $G$ is induced by an immersion $Y\immerses X$, where $Y$ is a finite, connected, irreducible 2-complex.  By negative immersions, $\chi(Y)\leq -1$. A choice of maximal tree in the 1-skeleton of $Y$ leads to a presentation for $H$ with $m$ generators and $n$ relators, where $\chi(Y)=1-m+n$, so the deficiency of this presentation is
\[
m-n=1-\chi(Y)\geq 2\,.
\]
Therefore, by the Baumslag--Pride theorem \cite{baumslag_groups_1978}, $H$ is large, as required. For general $H$, consider the Grushko decomposition
\[
H=F*H_{1}*\ldots*H_k
\]
where $F$ is free and each $H_i$ is non-cyclic and freely indecomposable. Since the factors $H_i$ are all large, it follows that $H$ is large unless $H$ is cyclic.
\end{proof}

Theorem \ref{thm: Other subgroup constraints} follows immediately from Theorems \ref{thm: Uniform negative immersions} and \ref{thm: Other subgroup constraints for UNI complexes}.

We close by noting that Lemma \ref{lem: Bound on immersions} above can be made somewhat effective.

\begin{proposition}\label{prop: Effective bound on number of 2-cells}
Let $X$ be the presentation complex of  a one-relator group $G=F/\ncl{w}$ with negative immersions. There is an algorithm that takes $w$ as input and outputs a constant $C=C(w)$ such that every one-ended subgroup generated $H$ of $G$ with abelianisation of rational rank at most $r$ is represented by an immersion $Y\immerses X$ with at most $C(r-1)$ 2-cells.
\end{proposition}
\begin{proof}
Theorem \ref{thm: Uniform negative immersions} asserts that $X$ has uniform negative immersions, meaning that there is an $\epsilon>0$ such that
\[
\frac{\chi(Y)}{\#\{2\mathrm{-cells\,of}\,Y\}}\leq -\epsilon\,,
\]
for every irreducible $Y$ immersing into $X$. The constant $\epsilon$ is an extremal value of the linear-programming problem used in the proof of Theorem \ref{thm: Rationality theorem}. This linear-programming problem is explicitly defined in terms of $X$, so $\epsilon$ can be computed from the data of $X$. The proof of Lemma \ref{lem: Bound on immersions} now shows that the number of 2-cells of $Y$ is at most $(r-1)/\epsilon$, so we may take $C=1/\epsilon$.
\end{proof}

Note that Proposition \ref{prop: Effective bound on number of 2-cells} does not immediately imply that there is an algorithm to compute presentations for finitely generated subgroups of one-relator groups. In \cite{groves_enumerating_2009}, a coherent group in which there is an algorithm to compute a finite presentation for a given finitely generated subgroup is called \emph{effectively coherent}. The following question remains open.

\begin{question}\label{qu: Effective coherence}
Are one-relator groups with negative immersions effectively coherent?
\end{question}

\bibliographystyle{amsalpha}
\providecommand{\bysame}{\leavevmode\hbox to3em{\hrulefill}\thinspace}
\providecommand{\MR}{\relax\ifhmode\unskip\space\fi MR }
\providecommand{\MRhref}[2]{%
  \href{http://www.ams.org/mathscinet-getitem?mr=#1}{#2}
}
\providecommand{\href}[2]{#2}

\Addresses

\end{document}